\def\namedlabel#1#2{\begingroup
    #2%
    \def\@currentlabel{#2}%
    \phantomsection\label{#1}\endgroup
}
\numberwithin{equation}{section}
\newtheorem{theorem}{Theorem}[section]
\newtheorem{lemma}[theorem]{Lemma}
\theoremstyle{definition}
\newtheorem{assumption}{Assumption}[section]
\newtheorem{definition}[theorem]{Definition}
\newtheorem{remark}[theorem]{Remark}
\newcommand{\E}{{\mathbb{E}}}
\newcommand{\N}{{\mathbb{N}}}
\renewcommand{\P}{{\mathbb{P}}}
\newcommand{\R}{{\mathbb{R}}}
\newcommand{\F}{\mathbb{F}}
\newcommand{\cF}{{\cal F}}
\newcommand{\diff}{\mathop{}\!\mathrm{d}}
\title{Strong order $1$ adaptive approximation of jump-diffusion SDEs with discontinuous drift}
\author{Verena Schwarz}
\date{}
\begin{document}

\maketitle

\begin{abstract}
We present an adaptive approximation scheme for jump-diffusion SDEs with discontinuous drift and (possibly) degenerate diffusion.
This transformation-based doubly-adaptive quasi-Milstein scheme is the first scheme that has strong convergence rate $1$ in terms of the number of evaluations of the driving noise processes in $L^p$ for $p\in[1,\infty)$. 
To obtain our result, we prove that under slightly stronger assumptions, which are still weaker than those in the existing literature, a related doubly-adaptive quasi-Milstein scheme has convergence order $1$. 
This scheme is doubly-adaptive in the sense that it is jump-adapted, i.e.~all jump times of the Poisson noise are grid points, and it includes an adaptive step-size strategy to account for the discontinuities of the drift. \\
\noindent Keywords: jump-diffusion stochastic differential equations, discontinuous drift, strong convergence rate, adaptive scheme, higher order scheme\\
Mathematics Subject Classification (2020): 60H10, 65C30, 65C20
\end{abstract}

\section{Introduction}\label{sec:intro}

We consider the following time-homogeneous jump-diffusion stochastic differential equations (SDEs),
\begin{equation}\label{eq:SDE}
\begin{aligned}
\diff X_t = \mu(X_t) \diff t + \sigma(X_t)\diff W_t + \rho(X_{t-}) \diff N_t, \quad t\in[0,T],\quad X_0 =\xi,
\end{aligned}
\end{equation}
where $\xi\in\R$, $\mu,\sigma,\rho\colon\R\to \R$ are measurable functions, $T\in(0,\infty)$, $W=(W_t)_{t\geq 0}$ is a
standard Brownian motion, and  $N= (N_t)_{t\geq 0}$ is a homogeneous Poisson process with intensity $\lambda \in (0,\infty)$ on the complete probability space $(\Omega,\cF,\P)$. Further, the filtration $\mathbb{F}=(\mathbb{F}_t)_{t\geq 0}$ is the augmented natural filtration of $N$ and $W$, i.e.~for all $t\geq 0$, $\mathbb{F}_t = \sigma(\{(W_s,N_s) \colon s\leq t\}\cup \mathcal{N})$, where $\mathcal N = \{A\in \cF\colon \P(A)=0\}$. Then $(\Omega,\cF,\mathbb{F},\P)$ is a filtered probability space satisfying the usual conditions. 

We study the numerical approximation of these SDEs in the case of a discontinuous drift. To be more precise, we assume that the drift coefficient is piecewise Lipschitz, the diffusion and jump coefficients are Lipschitz continuous, the diffusion is non-zero at the discontinuities of the drift, and the drift and diffusion coefficients are differentiable with Lipschitz continuous derivatives in between neighbouring discontinuities. Under our assumptions existence and uniqueness in the strong sense is settled by \cite{PS20,PSX21}. Such SDEs are, for example, used for modelling energy prices or control actions on energy markets, cf.~\cite{situ2005,PBL2010,benth2014,SS16, shardin2017}. We construct an adaptive approximation scheme that has convergence rate $1$ in terms of the number of evaluations of the driving noise processes for jump-diffusion SDEs with discontinuous drift, the so-called transformation-based doubly-adaptive quasi-Milstein scheme. The main novelty is that for the first time, we combine two different types of adaptivity in this scheme. It is jump-adapted, i.e.~all the jump times of the Poisson process are added to the grid, and it is adaptive with respect to the discontinuity points of the drift, i.e.~the closer the approximated solution is to a discontinuity point, the smaller the size of the next time step. This is the first approximation scheme for jump-diffusion SDEs with discontinuous drift that has convergence rate $1$ in terms of the number of evaluations of the driving noise processes in $L^p$ for $p\in[1,\infty)$ and hence recovers the optimal rate for non-adaptive as well as adaptive schemes for a large class of SDEs without jump noise and with Lipschitz-continuous coefficients \cite{mullergronbach2002,mullergronbach2004}. 

The numerical approximation of SDEs with discontinuous drift has recently received a lot of attention. For SDEs without jump noise see, e.g., \cite{halidias2006,halidias2008,gyongy2011,LS16,ngo2016,LS17,ngo2017a,ngo2017b,pamen2017,LS18,hefter2019,muellergronbach2019,muellergronbach2019b,NSS19,Dareiotis2020,MuellerGronbach2023,Neuenkirch2021,Butkovsky2021,Dareiotis2023,Yaroslavtseva2022,Mullergronbach2022,Hu2022, ellinger2024sharp, ellinger2024optimal, muller2025performance}.
Convergence results for numerical approximations of SDEs with jump noise can be found under standard assumptions in  \cite{Gardon2004,Gardon2006,PBL2010,buckwar2011}, and under non-standard assumptions in \cite{Higham2005,Higham2006,Higham2007,dareiotis2016,Przybylowicz2016,Kaluza2018,Deng2019a,Deng2019b,Tambue2019,Przybylowicz2019a,Przybylowicz2019b,Chen2020,Liu2020,Butkovsky2022,Kieu2022,Nasroallah2022,Przybylowicz2022,Kaluza2022,przybylowicz2024randomized,kelly2025strong}. The approximation of jump-diffusion SDEs with discontinuous drift has first been studied in \cite{PS20} where $L^2$-convergence order $1/2$ for the Euler--Maruyama approximation is proven. In \cite{PSS2024JMS} the first higher-order scheme, the so-called transformation-based jump-adapted quasi-Milstein scheme, is defined and strong convergence order $3/4$ in $L^p$, $p\in[1,\infty)$ is proven. This rate is proven to be optimal in $L^1$ among all approximation schemes that are only allowed to be jump-adapted in \cite{przybylowicz2024lower}.

The transformation-based doubly-adaptive quasi-Milstein scheme defined in this article is inspired by some ideas from the literature on the construction of algorithms. Transformation-based schemes have been introduced in \cite{LS16,LS17} for SDEs with discontinuous drift without a jump noise. The transformation is defined in such a way that the drift coefficient of the transformed SDE gains additional regularity. This technique inspired many of the subsequent contributions. A transformation-based quasi-Milstein scheme is proven to have convergence order $3/4$ in $L^p$ in \cite{muellergronbach2019b} for SDEs with discontinuous drift without a jump noise. 
Schemes with adaptive step-size function originate in \cite{NSS19}, where an adaptive Euler--Maruyama scheme has been studied for the approximation of SDEs with discontinuos drift without a jump noise. A similar idea has been used in \cite{Yaroslavtseva2022} to construct a transformation-based adaptive quasi-Milstein scheme with convergence rate $1$ in terms of the number of evaluations of the driving Brownian motion in $L^p$ for SDEs with discontinuous drift without jump noise. The schemes and results in \cite{muellergronbach2019b, Yaroslavtseva2022} cannot be extended straightforwardly to jump-diffusion SDEs. 
Instead jump-adapted approximation schemes are needed, which are for example studied in \cite{PBL2010, kelly2025strong, PSS2024JMS}. In \cite{PSS2024JMS} the transformation-based jump-adapted quasi-Milstein scheme has been studied for jump-diffusion SDEs but the best rate that could be achieved by this scheme is $3/4$. 

In this paper, we combine the approximation schemes from \cite{Yaroslavtseva2022} and \cite{PSS2024JMS}, which are based on ideas introduced in \cite{NSS19} resp.~\cite{PBL2010}, to define the first doubly-adaptive approximation scheme and prove that it converges with strong rate $1$ in terms of the number of evaluations of the driving noise processes. For this a transformation $G\colon\R\to\R$ is applied to $X$ to obtain an SDE for $Z=G(X)$, which satisfies the assumptions made on the coefficients of $X$ and additionally has a continuous drift. This implies that the drift of the transformed SDE is Lipschitz continuous. For the approximation of the solution $Z$ we define the same step-size function $h^\delta\colon\R\to\R$ as in \cite{Yaroslavtseva2022}, which has value $\delta$ away from the discontinuities of the drift and smaller values closer to the discontinuities of the drift. Given the value of the approximation $Z^{(\delta)}$ at some grid point, we determine the next grid point by adding the minimum of the step-size function evaluated at this point and the time until the next jump of the Poisson process. Then we compute the left limit of $Z^{(\delta)}$ at the next grid point via a quasi-Milstein scheme. In case this grid point is a jump time of the Poisson process, we then add the corresponding jump term;  
for details see \eqref{eqDefMS0}--\eqref{EqDefMS}. 

For this scheme we prove in Theorem \ref{ConvResTSDE} that under our assumptions for all $p\in[1,\infty)$ there exists a constant $c\in(0,\infty)$ such that for small enough values of $\delta$,
\[\E\Big[\sup_{t\in[0,T]}\big| Z(t) - Z^{(\delta)}(t)\big|^p \Big]^{\frac{1}{p}} \leq c \delta.\]
To ensure that the cost of our adaptive algorithm stays proportional to $\delta$, a cost analysis is necessary. For this, we denote by $n(Z^{(\delta)}_T)$ the maximum number of time steps needed to reach $T$, which is proportional to the computational cost of the numerical approximation $Z^{(\delta)}$. In Theorem \ref{thm:cost} we prove that there exists a constant $\widetilde c\in(0,\infty)$ such that for small enough values of $\delta$,
\[ \E[n(Z^{(\delta)}_T)]\leq c(\delta^{-1}+\E[N_T]).\]
Using these results and analytical properties of the transformation $G$ and its inverse $G^{-1}$ we prove in Theorem \ref{MainWeakAss} under our assumptions on the coefficients of SDE \eqref{eq:SDE} that there exist $C,\widetilde C\in(0,\infty)$ such that for small enough values of $\delta$,
\begin{equation}
    \begin{aligned}
    \E\Big[ \sup_{t\in[0,T]} |X_t- G^{-1}(Z^{(\delta)}_t) |^p\Big]^\frac{1}{p} \leq C \delta, \qquad 
    \E\Big[n\big(G^{-1}(Z^{(\delta)}_T)\big)\Big]\leq \widetilde C (\delta^{-1}+\E[N_T]).
    \end{aligned}
\end{equation}

Note that in case $\rho \equiv 0$ our scheme coincides with the scheme considered in \cite{Yaroslavtseva2022} and our proofs follow the general strategy presented in \cite{Yaroslavtseva2022}. However, adding the driving Poisson noise and the corresponding jump coefficient requires important changes in the definition of the approximation scheme compared to \cite{Yaroslavtseva2022}. Similar to \cite{PSS2024JMS} a jump-adapted grid needs to be included in the scheme. In combination with the adaptive step-size function, this two-fold adaptivity is the main novelty of the scheme. Together with the jump noise itself, they present challenges for the theoretical proofs. To meet these challenges, we introduce a series representation of the coefficients of the SDE and the approximation scheme, we condition on either the Poisson noise having a certain value or on additionally introduced filtrations, and we carefully estimate the newly appearing terms. The big advantage of using a jump-adapted scheme is that the approximation scheme jumps at exactly the same times as the solution itself. This allows for improved estimates on the jump terms of the approximation scheme, which are crucial to obtain strong convergence rate $1$ in terms of the number of evaluations of the driving noise processes in $L^p$ for $p\in[1,\infty)$.

The paper is structured as follows: In Section \ref{sec:setting} we introduce the setting. In Section \ref{SecQMS} we define the doubly-adaptive quasi-Milstein scheme, prove its strong convergence rate and estimate the computational cost. In Section \ref{ConvDAqMS} we define based on the previous section the transformation-based doubly-adaptive quasi-Milstein scheme and prove the main result of this article.

\section{Setting}\label{sec:setting}

In the following, we denote the compensated Poisson process by $\tilde N=(\tilde N_t)_{t\geq 0}$ with $\tilde N_t=N_t-\lambda t$ for all $t\in[0,\infty)$. Note that the compensated Poisson process is a square-integrable $\mathbb{F}$-martingale. Further, let  $L_f$ be the Lipschitz constant of a generic Lipschitz continuous function $f$, and $\operatorname{Id}$ the identity mapping.
Moreover, for two measurable spaces $(A_1,\mathcal{A}_1)$ and $(A_2,\mathcal{A}_2)$, for measurable functions $Y_1\colon(\Omega,\mathcal{F})\to (A_1,\mathcal{A}_1)$, $Y_2\colon(\Omega,\mathcal{F})\to (A_2,\mathcal{A}_2)$, and $f\colon A_1\times A_2 \to \R$, and $\mathcal{G}$ a sub-$\sigma$-algebra of $\mathcal{F}$, we denote by $\E[f(Y_1,y)|\mathcal{G}] |_{y=Y_2}$ the random variable $g(Y_2)$, where $g\colon A_2\to\R$ is defined by $g(y)= \E[f(Y_1,y)|\mathcal{G}]$ for all $y\in A_2$. In particular, this notation is used in case that $\mathcal{G} = \{\emptyset, \Omega\}$, then conditioning is omitted and we write $\E[f(Y_1,y)] |_{y=Y_2}$.
We recall the definition of piecewise Lipschitz:

\begin{definition}[\text{\cite[Definition 2.1]{LS17}}]
Let $I\subseteq\R$ be an interval and let $m\in\N$. A function $f\colon I\to\R$
is called piecewise Lipschitz, if there are finitely many points $\zeta_1<\ldots<\zeta_m\in
I$ such that $f$ is Lipschitz on each of the intervals $(-\infty,\zeta_1)\cap I,
(\zeta_m,\infty)\cap I$, and $(\zeta_k,\zeta_{k+1}),k=1,\ldots,m-1$.
\end{definition}

In this article we work under the same assumptions as in \cite{PSS2024JMS}.
\newpage

\begin{assumption}\label{assX}
Assume that there exist $m\in\N$ and $-\infty = \zeta_0 < \zeta_1 < \ldots< \zeta_m< \zeta_{m+1} =\infty$ such that the coefficients of SDE \eqref{eq:SDE} satisfy:
\begin{itemize}
\item[(i)] $\mu\colon\R \to \R$ is piecewise Lipschitz continuous with potential discontinuities in $\zeta_1,\dots,\zeta_m\in\R$;
\item[(ii)] $\sigma\colon\R\to\R$ is Lipschitz continuous and for all $k\in\{1,\dots,m\}$, $\sigma(\zeta_k) \neq 0$;
\item[(iii)] $\rho\colon\R\to\R$ is Lipschitz continuous;
\item[(iv)] $\mu$ and $\sigma$ are differentiable with Lipschitz continuous derivatives on $(\zeta_{i-1},\zeta_i)$ for all $i\in\{1,\ldots,m+1\}$.
\end{itemize}
\end{assumption}

\begin{remark}
    Under Assumption \ref{assX} it is well-known that $\mu$, $\sigma$, and $\rho$ satisfy a linear growth condition, i.e.~for $f\in\{\mu,\sigma,\rho\}$ there exists a constant $c_f \in (0,\infty)$ such that $|f(x)|\leq c_f(1+|x|)$ and that there exists a unique strong solution to SDE \eqref{eq:SDE}, e.g.~\cite[Lemma 2]{PS20}.
\end{remark}

\section{Convergence of the doubly-adaptive quasi-Milstein scheme}\label{SecQMS}

In this section, we introduce the doubly-adapted quasi-Milstein scheme and prove under slightly stronger assumptions than Assumption \ref{assX} that this scheme has convergence rate $1$ in $L^p$ for $p\in[1,\infty)$. We need this result in Section \ref{ConvDAqMS} to prove our main result using a transformation technique.

Consider the following time-homogeneous jump-diffusion SDE
\begin{align}\label{eq:TSDE}
\diff Z_t &=\widetilde \mu(Z_t)  \diff t + \widetilde \sigma(Z_t) \diff W_t+\widetilde \rho(Z_{t-})\diff N_t , \quad t\in[0,T], \quad Z_0=\widetilde \xi,
\end{align}
with $\widetilde \xi\in\R$, $\widetilde\mu,\widetilde \sigma,\widetilde\rho\colon\R\to \R$ are measurable functions, $T\in(0,\infty)$, $W=(W_t)_{t\geq 0}$, $N= (N_t)_{t\geq 0}$, and  $(\Omega,\cF,\F,\P)$ are defined as in Section \ref{sec:intro}.

\begin{assumption}\label{assZ} 
We assume on the coefficients of SDE \eqref{eq:TSDE} that
\begin{itemize}
\item[\namedlabel{Zi}{(i)}] $\widetilde\mu\colon\R \to \R$,  $\widetilde\sigma\colon\R\to\R$, and $\widetilde\rho\colon\R\to\R$ are Lipschitz continuous;
\item[\namedlabel{Zii}{(ii)}] there exists $\widetilde m\in\N$ and $-\infty = \zeta_0 < \zeta_1 < \ldots< \zeta_{\widetilde m}< \zeta_{\widetilde m+1} =\infty$ such that $\widetilde\mu$ and $\widetilde \sigma$ are differentiable with Lipschitz continuous derivatives on $(\zeta_{k-1},\zeta_{k})$ for $k\in\{1,\ldots,\widetilde m+1\}$;
\item[\namedlabel{Zv}{(iii)}] for all $k\in\{1,\dots,\widetilde m\}$, $\widetilde\sigma(\zeta_k) \neq 0$.
\end{itemize}
\end{assumption}

\begin{remark}
    Under Assumption \ref{assZ} there exists a unique strong solution to SDE \eqref{eq:TSDE}, c.f.~\cite[p.~255,~Theorem~6]{protter2005}.
\end{remark}

Define for $f\in\{\widetilde\mu,\widetilde\sigma\}$ the functions $d_f\colon\R\to\R$ by
\begin{equation}
d_f(x) = \left\{
\begin{array}{ll}
f'(x) & f \text{ differentiable at } x \\
0 & \, \textrm{else.} \\
\end{array}
\right. 
\end{equation}

\begin{remark}\label{ConAss}
    Under Assumption \ref{assZ} by \cite[Lemma 3.1]{PSS2024JMS} it holds that
    \begin{itemize}
        \item[(i)] for $f\in\{\widetilde\mu,\widetilde\sigma,\widetilde\rho\}$ there exists  $c_f \in (0,\infty)$ such that for all $x\in\R$, $|f(x)|\leq c_{f}(1+|x|)$;
        \item[(ii)] for $g\in\{\widetilde\mu,\widetilde\sigma\}$ it holds that $\|d_{g}\|_{\infty}\leq L_{g} <\infty$ and there exists $b_{g} \in(0,\infty)$ such that for all $x,y\in\R$ for which $g$ is differentiable on $[x,y]$, $|g(y)-g(x)-g'(x)(y-x)|\leq b_{g} |y-x|^2$.
    \end{itemize}
\end{remark}

To construct the doubly-adapted quasi-Milstein scheme we first recall the step-size function $h^\delta\colon \R\to (0,1)$ from \cite{Yaroslavtseva2022}, which is an adaption of the step-size function introduced in \cite{NSS19}. For this we first define the set $\Theta = \{\zeta_1,\ldots, \zeta_{\widetilde m}\}$ and for all $\varepsilon\in(0,\infty)$, $\Theta^\varepsilon = \{x\in\R: d(x,\Theta)<\varepsilon\}$. Further, let $\varepsilon_0 \in(0,1]$ such that $\varepsilon_0 \leq \frac{1}{2}\min\{\zeta_i -\zeta_{i-1} : i \in\{2,\ldots, \widetilde m\}\}$ whenever $\widetilde m \geq 2$. For $\delta > 0$ define, 
\begin{equation}
    \varepsilon_1^\delta = \sqrt{\delta}\log^2(\delta^{-1}), \quad\quad
    \varepsilon_2^\delta = \delta\log^4(\delta^{-1}).
\end{equation}
Next we chose $\delta_0\in(0,1)$ such that for all $\delta \in(0,\delta_0]$, 
\begin{equation}
    \varepsilon_2^\delta \leq \varepsilon_1^\delta \leq \varepsilon_0/2.
\end{equation}
With this the step-size function $h^\delta:\R\to(0,1)$ is for all $\delta \in(0,\delta_0]$ defined by
\begin{equation}
    h^\delta (x) = \left\{
\begin{array}{ll}
\delta & x\notin \Theta^{\varepsilon_1^\delta} \\
\Big(\frac{d(x,\Theta)}{\log^2(\delta^{-1})}\Big)^2 & x \in \Theta^{\varepsilon_1^\delta} \setminus \Theta^{\varepsilon_2^\delta}\\
\delta^2\log^4(\delta^{-1}) & x\in \Theta^{\varepsilon_2^\delta}.
\end{array}
\right.
\end{equation}
Here $\log$ denotes the logarithm to the basis $e$. Note that $h^{(\delta)}$ is continuous and for all $\delta\in(0,\delta_0]$ and all $x\in\R$, $\delta^2 \log^4(\delta^{-1})\leq h^{\delta}(x) \leq \delta$.  Using $h^\delta$, denoting by $(\nu_i)_{i\in\N}$ the family of all jump times of the Poisson process and recalling the definition of the jump-adapted quasi-Milstein scheme from \cite{PSS2024JMS}, we define for all $\delta\in(0,\delta_0]$ the doubly-adaptive quasi-Milstein scheme $(Z^{(\delta)}_t)_{t\in[0,T]}$ by
\begin{equation}\label{eqDefMS0}
\begin{aligned}
\tau_0 = 0, \quad Z^{(\delta)}_0 =\widetilde\xi,
\end{aligned}
\end{equation}
and for all $n\in\N_0$ by 
\begin{equation}
\begin{aligned}\label{EqTauGen} 
\tau_{n+1} = & \big(\tau_n+ h^\delta(Z^{(\delta)}_{\tau_n})\big)\wedge \min\{\nu_i: i\in\N, \nu_i > \tau_{n}\} \wedge T, 
\end{aligned}
\end{equation}
\begin{equation}
\begin{aligned}\label{EqDefMSLL}
Z^{(\delta)}_{\tau_{n+1}-} =& Z^{(\delta)}_{\tau_{n}} + \widetilde \mu\big(Z^{(\delta)}_{\tau_{n}}\big)  \big(\tau_{n+1}- \tau_{n}\big) + \widetilde\sigma \big(Z^{(\delta)}_{\tau_{n}}\big)\big(W_{\tau_{n+1}} -W_{\tau_{n}}\big)  \\
&+ \frac{1}{2} \widetilde\sigma \big(Z^{(\delta)}_{\tau_{n}}\big) d_{\widetilde\sigma} \big(Z^{(\delta)}_{\tau_{n}}\big)  \big(\big(W_{\tau_{n+1}} -W_{\tau_{n}}\big)^2 -\big(\tau_{n+1}-\tau_{n}\big)\big)
\end{aligned}
\end{equation}
and 
\begin{equation}
\begin{aligned}\label{EqDefMS}
Z^{(\delta)}_{\tau_{n+1}} = Z^{(\delta)}_{\tau_{n+1}-} + \widetilde \rho\big(Z^{(\delta)}_{\tau_{n+1}-}\big) \big(N_{\tau_{n+1}}- N_{\tau_{n}}\big) = Z^{(\delta)}_{\tau_{n+1}-} + \widetilde \rho\big(Z^{(\delta)}_{\tau_{n+1}-}\big) \big(N_{\tau_{n+1}}- N_{\tau_{n+1}-}\big).
\end{aligned}
\end{equation}
Between the points of the time discretisation, i.e.~for $t\in\big(\tau_{n},\tau_{n+1}\big)$, $n\in\N_0$, we set
\begin{equation}
\begin{aligned}\label{EqDefMScont}
Z^{(\delta)}_{t} =& Z^{(\delta)}_{\tau_{n}} + \widetilde \mu\big(Z^{(\delta)}_{\tau_{n}}\big) \big(t- \tau_{n}\big) + \widetilde\sigma \big(Z^{(\delta)}_{\tau_{n}}\big) \big(W_{t} -W_{\tau_{n}}\big)  \\
&+ \widetilde\sigma \big(Z^{(\delta)}_{\tau_{n}}\big) d_{\widetilde\sigma} \big(Z^{(\delta)}_{\tau_{n}}\big) \frac{1}{2} \big(\big(W_{t} -W_{\tau_{n}}\big)^2 -\big(t-\tau_{n}\big)\big).
\end{aligned}
\end{equation}
Here the sequence of grid points $(\tau_n)_{n\in\N}$ is chosen in a two-fold adaptive way. Namely it contains all jump points of the Poisson process and it takes into account how far the last approximated value of the solution is from the discontinuities of $\mu$ (due to the step-size function $h^\delta$).
Note that the following integral notation is equivalent to \eqref{eqDefMS0}, \eqref{EqDefMSLL}, \eqref{EqDefMS}, \eqref{EqDefMScont},
\begin{equation}\label{SumAppr}
\begin{aligned}
Z^{(\delta)}_{t} & =  \widetilde\xi
+ \int_0^t \sum_{n=0}^{\infty} \widetilde \mu\big(Z^{(\delta)}_{\tau_{n}}\big)\mathds{1}_{(\tau_n,\tau_{n+1}]}(u) \diff u\\
&\quad+ \int_0^t \sum_{n=0}^{\infty} \Big(\widetilde \sigma\big(Z^{(\delta)}_{\tau_{n}}\big) + \widetilde\sigma \big(Z^{(\delta)}_{\tau_n}\big) d_{\widetilde\sigma} \big(Z^{(\delta)}_{\tau_n}\big) (W_{u}-W_{\tau_n}) \Big)\mathds{1}_{(\tau_n,\tau_{n+1}]}(u) \diff W_u \\
&\quad+ \int_0^t \widetilde \rho\big(Z^{(\delta)}_{u-}\big)\diff N_u.
\end{aligned}
\end{equation}
Similarly, we can express the solution on SDE \eqref{eq:TSDE} by
\begin{equation}
\begin{aligned}
Z_{t} = &\, \widetilde\xi
+ \int_0^t \sum_{n=0}^{\infty} \widetilde \mu\big(Z_{u}\big)\mathds{1}_{(\tau_n,\tau_{n+1}]}(u) \diff u
+ \int_0^t \sum_{n=0}^{\infty} \widetilde \sigma\big(Z_{u}\big)\mathds{1}_{(\tau_n,\tau_{n+1}]}(u) \diff W_u
+ \int_0^t \widetilde \rho\big(Z_{u-}\big)\diff N_u.
\end{aligned}
\end{equation}

\subsection{Preparatory lemmas}

In this section, we provide some basic properties of the time discretisation $(\tau_n)_{n\in\N}$ and moment estimates for the approximation scheme $(Z^{(\delta)}_t)_{t\in[0,T]}$. 
We consider a second filtration $\widetilde\F=(\widetilde\F_t)_{t\geq 0}$, which will be needed later on. It is defined for all $t\in[0,\infty)$ by 
\begin{equation}
\begin{aligned}
\widetilde\F_t = \sigma(\{W_s:s\leq t\}\cup \{N_s: s\geq 0\} \cup \mathcal N),
\end{aligned}
\end{equation}
where $\mathcal N$ is the set of all nullsets of $\cF$.
$W$ is a Brownian motion with respect to the filtration $(\widetilde\F_t)_{t\geq 0}$ as $W$ and $N$ are independent.
Moreover, $Z^{(\delta)}$ and $Z$ are $\widetilde \F$-adapted càdlàg processes, since $\F_t\subset\widetilde\F_t$ for all $t\in[0,\infty)$.

\begin{lemma}\label{PropDiscGrid}
Let $\delta \in(0,\delta_0]$. Then the sequence $(\tau_n)_{n\in\N}$ satisfies for all $n\in\N$,
\begin{itemize}
    \item[\namedlabel{STi}{(i)}] $\tau_n$ is an $\F$-stopping time and an $\widetilde\F$-stopping time;
    \item[\namedlabel{STii}{(ii)}] $Z^{(\delta)}_{\tau_n}$ is $\F_{\tau_n}$-measurable and $\widetilde\F_{\tau_n}$-measurable;
    \item[\namedlabel{STiii}{(iii)}] $(W_{\tau_n+s}- W_{\tau_n})_{s\geq 0}$ is a Brownian motion with respect to the filtration $(\F_{\tau_n+s})_{s\geq0}$,  $(N_{\tau_n+s}- N_{\tau_n})_{s\geq 0}$ is a Poisson process with respect to the filtration $(\F_{\tau_n+s})_{s\geq0}$, both processes are independent of $\F_{\tau_n}$, and it holds that $(W_{\tau_n+s}- W_{\tau_n})_{s\geq 0}$ is independent of $(N_{\tau_n+s}- N_{\tau_n})_{s\geq 0}$;
    \item[\namedlabel{STiv}{(iv)}] $(W_{\tau_n+s}- W_{\tau_n})_{s\geq 0}$ is a Brownian motion with respect to the filtration $(\widetilde \F_{\tau_n+s})_{s\geq0}$, which is independent of $\widetilde \F_{\tau_n}$.
\end{itemize}
\end{lemma}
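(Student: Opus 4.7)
The plan is to proceed by induction on $n\in\mathbb{N}_0$, simultaneously establishing all four claims at each step. The base case $n=0$ is immediate: $\tau_0=0$ is a stopping time for any filtration, $Z^{(\delta)}_0=\widetilde\xi$ is deterministic hence measurable with respect to $\mathbb{F}_0$ and $\widetilde{\mathbb{F}}_0$, and claims \ref{STiii}, \ref{STiv} at $\tau_0=0$ reduce to the defining properties of $W$ and $N$ together with their independence.

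For the inductive step, assume the statement holds up to index $n$. I would first handle \ref{STi}: writing $\tau_{n+1}=(\tau_n+h^\delta(Z^{(\delta)}_{\tau_n}))\wedge\min\{\nu_i:\nu_i>\tau_n\}\wedge T$, the first term is the sum of the stopping time $\tau_n$ and an $\mathbb{F}_{\tau_n}$-measurable, bounded random variable (since $h^\delta$ is continuous and bounded and $Z^{(\delta)}_{\tau_n}$ is $\mathbb{F}_{\tau_n}$-measurable by the inductive hypothesis), which is a standard construction of a stopping time. The first jump after $\tau_n$ is likewise an $\mathbb{F}$-stopping time as the début of a progressive set, and the minimum of stopping times is a stopping time. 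The same reasoning applies verbatim to $\widetilde{\mathbb{F}}$. Then \ref{STii} for $n+1$ follows by inspecting the defining formulas \eqref{EqDefMSLL} and \eqref{EqDefMS}: $Z^{(\delta)}_{\tau_{n+1}}$ is a measurable function of $Z^{(\delta)}_{\tau_n}$, $\tau_{n+1}-\tau_n$, $W_{\tau_{n+1}}-W_{\tau_n}$, and $N_{\tau_{n+1}}-N_{\tau_n}$, all of which are $\mathbb{F}_{\tau_{n+1}}$-measurable (and analogously for $\widetilde{\mathbb{F}}_{\tau_{n+1}}$, using $\widetilde{\mathbb{F}}_{\tau_n}\subset\widetilde{\mathbb{F}}_{\tau_{n+1}}$).

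Claim \ref{STiii} is then the strong Markov property for the two-dimensional Lévy process $(W,N)$ at the finite $\mathbb{F}$-stopping time $\tau_n$: the shifted processes are jointly independent of $\mathbb{F}_{\tau_n}$ with their original law, which is precisely independence of $\mathbb{F}_{\tau_n}$, the Brownian and Poisson property of the shifted processes with respect to $(\mathbb{F}_{\tau_n+s})_{s\geq 0}$, and their mutual independence (inherited from independence of $W$ and $N$).

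The main obstacle is \ref{STiv}, because $\widetilde{\mathbb{F}}_t$ contains the entire future of $N$ and is therefore strictly larger than $\mathbb{F}_t$; here one cannot simply invoke the $\mathbb{F}$-strong Markov property. The key observation is that $\widetilde{\mathbb{F}}_t=\sigma(W_s:s\leq t)\vee\sigma(N_s:s\geq 0)\vee\mathcal{N}$ is the \emph{initial enlargement} of $\sigma(W_s:s\leq t)\vee\mathcal{N}$ by the entire $\sigma$-algebra generated by $N$. Since $W$ and $N$ are independent, a classical result on initial enlargement by an independent $\sigma$-algebra implies that $W$ remains a Brownian motion with respect to $\widetilde{\mathbb{F}}$. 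Combined with \ref{STi}, namely that $\tau_n$ is an $\widetilde{\mathbb{F}}$-stopping time, the strong Markov property of $W$ under $\widetilde{\mathbb{F}}$ then yields both assertions in \ref{STiv}. This decomposition via initial enlargement is the step that requires the most care, and it is what makes the auxiliary filtration $\widetilde{\mathbb{F}}$ usable in later occupation-time estimates where $W$-dependent bounds need to be taken conditionally on the Poisson skeleton.
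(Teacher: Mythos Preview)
Your proposal is correct and follows essentially the same route as the paper: induction for \ref{STi} and \ref{STii}, the strong Markov property of the Lévy process $(W,N)$ for \ref{STiii}, and the observation that $W$ remains an $\widetilde{\mathbb{F}}$-Brownian motion (via independence of $N$) combined with strong Markov for \ref{STiv}. The paper is terser, citing \cite[Theorem 40.10]{Sato2013} and \cite[Theorem 11.11]{Kallenberg1997} directly for the last two items, whereas you spell out the initial-enlargement reasoning behind \ref{STiv}; but the substance is the same.
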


\begin{proof}
By induction, we prove \ref{STi} and \ref{STii}. Note that $\tau_0 \equiv 0$ is an $\mathbb F$-stopping time and $Z^{(\delta)}_{\tau_0} \equiv \widetilde \xi$ is $\mathbb{F}_0= \mathbb{F}_{\tau_0}$-measurable. Further, if $\tau_{n}$ is a stopping time and $Z^{(\delta)}_{\tau_n}$ is $\mathbb{F}_{\tau_n}$-measurable, then $\tau_n+ h^\delta(Z^{(\delta)}_{\tau_n}) > \tau_n$ is obviously $\mathbb{F}_{\tau_n}$-measurable and hence an $\mathbb F$-stopping time, cf.~\cite[Exercise 1.2.14]{karatzas1991}. This implies that  
$\tau_{n+1} = \big(\tau_n+ h^\delta(Z^{(\delta)}_{\tau_n})\big)\wedge \min\{\nu_i: i\in\N, \nu_i > \tau_{n}\} \wedge T$ is a $\mathbb F$-stopping time. Hence, by construction $Z^{(\delta)}_{\tau_{n+1}}$ is $\mathbb{F}_{\tau_{n+1}}$-measurable.
Item \ref{STiii} is implied by \cite[Theorem 40.10]{Sato2013} considering the two-dimensional Lévy process $(N,W)$ and \ref{STiv} is proven in \cite[Theorem 11.11]{Kallenberg1997}.
\end{proof}

Next we define for $\delta \in(0,\delta_0]$ and $t\in[0,\infty)$
\begin{equation}
    \underline{t} = \max\{\tau_n : n\in\N_0, \tau_n \leq t\}.
\end{equation}
Note that $\underline t$ is a random time, but not a stopping time. An introduction to random times can, for example, be found in \cite[p.~5]{karatzas1991}.
Further we define for all $s,t\in[0,\infty)$ the random variable
$W^{\underline t}_s = W_{\underline{t}+s}-W_{\underline t}$
and the stochastic process $W^{\underline t} =(W^{\underline t}_s)_{s\geq 0}$. Note that $W^{\underline t}$ is $\P$-a.s.~continuous.

\begin{lemma}\label{BMRandomTime}
    Let $\delta\in(0,\delta_0]$ and $t\in[0,\infty)$. Then $W$ and $W^{\underline t}$ have the same distribution and $W^{\underline t}$ is independent of $Z^{(\delta)}_{\underline t}$.
\end{lemma}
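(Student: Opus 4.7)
The plan is to decompose according to which grid point $\underline{t}$ equals, and exploit the fact that in the enlarged filtration $\widetilde{\F}$ the entire Poisson process is known from time $0$. Concretely, I first claim that for every $n\in\N_0$ the random variable $\tau_{n+1}$ is $\widetilde{\F}_{\tau_n}$-measurable. This is proved by induction, since $\tau_{n+1}=(\tau_n+h^\delta(Z^{(\delta)}_{\tau_n}))\wedge \min\{\nu_i:i\in\N,\ \nu_i>\tau_n\}\wedge T$ is built from $\tau_n$ (inductively $\widetilde{\F}_{\tau_n}$-measurable), $Z^{(\delta)}_{\tau_n}$ ($\widetilde{\F}_{\tau_n}$-measurable by Lemma~\ref{PropDiscGrid}\ref{STii}), and the full Poisson trajectory which is $\widetilde{\F}_0\subset\widetilde{\F}_{\tau_n}$-measurable by construction of $\widetilde{\F}$. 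In particular, the event $\{\tau_n\leq t<\tau_{n+1}\}$ belongs to $\widetilde{\F}_{\tau_n}$ for every fixed $t\in[0,\infty)$.

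Next, for any bounded measurable functionals $f$ on path space and $g$ on $\R$, I would write
\begin{equation}
\E\bigl[f(W^{\underline t})\,g(Z^{(\delta)}_{\underline t})\bigr]=\sum_{n=0}^{\infty}\E\bigl[f(W^{\tau_n})\,g(Z^{(\delta)}_{\tau_n})\,\mathds{1}_{\{\tau_n\leq t<\tau_{n+1}\}}\bigr],
\end{equation}
which is legitimate because the events $\{\tau_n\leq t<\tau_{n+1}\}$ partition $\Omega$ (up to the $\P$-null set where $\tau_n$ stagnates before reaching $T$, which can be handled separately by noting that $\underline{T}=T$ and applying the strong Markov property of $W$ at the deterministic time $T$). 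Conditioning each summand on $\widetilde{\F}_{\tau_n}$, the factors $g(Z^{(\delta)}_{\tau_n})$ and $\mathds{1}_{\{\tau_n\leq t<\tau_{n+1}\}}$ come out, and Lemma~\ref{PropDiscGrid}\ref{STiv} together with the strong Markov property gives $\E[f(W^{\tau_n})\mid\widetilde{\F}_{\tau_n}]=\E[f(W)]$. Therefore
\begin{equation}
\E\bigl[f(W^{\underline t})\,g(Z^{(\delta)}_{\underline t})\bigr]=\E[f(W)]\sum_{n=0}^{\infty}\E\bigl[g(Z^{(\delta)}_{\tau_n})\,\mathds{1}_{\{\tau_n\leq t<\tau_{n+1}\}}\bigr]=\E[f(W)]\,\E\bigl[g(Z^{(\delta)}_{\underline t})\bigr].
\end{equation}
Specialising to $g\equiv 1$ yields that $W^{\underline t}$ has the same distribution as $W$, and the full factorisation then expresses the desired independence of $W^{\underline t}$ and $Z^{(\delta)}_{\underline t}$.

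The main obstacle, and the reason the argument is performed in the auxiliary filtration $\widetilde{\F}$ rather than in $\F$, is that $\underline{t}$ is a random time but not a stopping time in $\F$, so the usual strong Markov property is not directly applicable at $\underline{t}$. Passing to $\widetilde{\F}$ circumvents this: the $\tau_n$ are $\widetilde{\F}$-stopping times, and crucially $\tau_{n+1}$ is even $\widetilde{\F}_{\tau_n}$-measurable (because the future Poisson jumps have been included in $\widetilde{\F}_0$), which is exactly what lets the indicator $\mathds{1}_{\{\tau_n\leq t<\tau_{n+1}\}}$ be pulled out of the conditional expectation. The remaining verifications (dominated convergence to justify the interchange of sum and expectation, and handling the $t\geq T$ boundary case) are routine once this structural point is in place.
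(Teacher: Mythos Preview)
Your proposal is correct and follows essentially the same approach as the paper: decompose over $\{\tau_n\leq t<\tau_{n+1}\}$, use that $\tau_{n+1}$ is $\widetilde{\F}_{\tau_n}$-measurable so both the indicator and $Z^{(\delta)}_{\tau_n}$ factor out, and apply the strong Markov property for $W$ in $\widetilde{\F}$ (Lemma~\ref{PropDiscGrid}\ref{STiv}) to identify the law of $W^{\tau_n}$ with that of $W$. The paper phrases this with indicator events $\{W^{\underline t}\in A\}$ and $\{Z^{(\delta)}_{\underline t}\in B\}$ rather than general bounded functionals, but the argument is the same.
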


\begin{proof}
    For all $A\in\mathcal{B}(C([0,\infty),\R))$ we have by Lemma \ref{PropDiscGrid} \ref{STii}, \ref{STiii}, and the fact that $\tau_{n+1}$ is $\widetilde{\mathbb{F}}_{\tau_n}$-measurable that
    \begin{equation}
        \begin{aligned}
        &\P(W^{\underline t}\in A) 
        = \sum_{n\in\N_0} \P(W^{\tau_n} \in A, t\in[\tau_n, \tau_{n+1}))
        = \sum_{n\in\N_0} \P(W^{\tau_n} \in A) \P( t\in[\tau_n, \tau_{n+1}))\\
        &= \sum_{n\in\N_0} \P(W \in A) \P( t\in[\tau_n, \tau_{n+1}))
        = \P(W \in A) \sum_{n\in\N_0} \P( t\in[\tau_n, \tau_{n+1}))
        = \P(W \in A).
        \end{aligned}
    \end{equation}
    Further for all $A\in\mathcal{B}(C([0,\infty),\R))$ and all $B\in\mathbb{B}(\R)$ we have by Lemma \ref{PropDiscGrid} \ref{STii}, \ref{STiii}, and the fact that $\tau_{n+1}$ is $\widetilde F_{\tau_n}$-measurable that
    \begin{equation}
        \begin{aligned}
        &\P(W^{\underline t}\in A, Z_{\underline t}^{(\delta)}\in B) 
        = \sum_{n\in\N_0} \P(W^{\tau_n} \in A, Z_{\tau_n}^{(\delta)}\in B, t\in[\tau_n, \tau_{n+1}))\\
        &= \sum_{n\in\N_0} \P(W^{\tau_n} \in A)\P(  Z_{\tau_n}^{(\delta)}\in B, t\in[\tau_n, \tau_{n+1}))
        = \sum_{n\in\N_0} \P(W \in A) \P(Z_{\underline t}^{(\delta)}\in B, t\in[\tau_n, \tau_{n+1}))\\
        &= \P(W \in A) \sum_{n\in\N_0} \P(Z_{\underline t}^{(\delta)}\in B, t\in[\tau_n, \tau_{n+1}))
        = \P(W \in A)\P(Z_{\underline t}^{(\delta)}\in B).
        \end{aligned}
    \end{equation}
\end{proof}

We recall the following two lemmas for the convenience of the reader as they are frequently used in our proofs.

\begin{lemma}[\text{\cite[Lemma 3.3]{PSS2024JMS}}]\label{LMEW}
Let $p\in\N$. Then there exists a constant $c_{W_p}\in(0,\infty)$ such that for all stopping times $\tau$, all $\delta\in(0,\delta_0]$, and all $t\in[0,\infty)$, 
\begin{equation}\label{MEW}
\begin{aligned}
&\E\big[|W_{\tau+t} - W_\tau|^p\big] \leq c_{W_p} \, t^\frac{p}{2},
\end{aligned}
\end{equation}
and 
\begin{equation}\label{MEW2}
\begin{aligned}
&\E\Big[\sup_{s\in[0,\delta]} |W_{\tau+s} - W_\tau|^p\Big] \leq c_{W_p} \, \delta^\frac{p}{2}.
\end{aligned}
\end{equation}
\end{lemma}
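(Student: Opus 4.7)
The plan is to reduce both estimates to classical Brownian-motion facts via the strong Markov property. Since $\tau$ is an $\F$-stopping time and $W$ is a Brownian motion with respect to $\F$, the shifted process $B=(B_s)_{s\geq 0}$ defined by $B_s := W_{\tau+s}-W_\tau$ is, by the strong Markov property of $W$ at $\tau$ (as already recorded for our specific $\tau_n$ in Lemma \ref{PropDiscGrid}\ref{STiii}), a standard Brownian motion. In particular $B$ has the same law as $W$ on $C([0,\infty),\R)$, so the two estimates follow from the corresponding assertions for $W$ itself.

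For the first bound, $B_t$ is centred Gaussian with variance $t$, hence by scaling $B_t \stackrel{d}{=} \sqrt{t}\,Z$ with $Z\sim \mathcal{N}(0,1)$, so
\begin{equation}
\E\bigl[|W_{\tau+t}-W_\tau|^p\bigr] \,=\, \E\bigl[|B_t|^p\bigr] \,=\, \E\bigl[|Z|^p\bigr]\, t^{p/2},
\end{equation}
and it suffices to take $c_{W_p} \geq \E[|Z|^p]$.

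For the maximal inequality, observe that $B$ is a continuous $\F_{\tau+\cdot}$-martingale starting at $0$. For $p\geq 2$, Doob's $L^p$-maximal inequality together with the preceding identity gives
\begin{equation}
\E\Bigl[\sup_{s\in[0,\delta]}|B_s|^p\Bigr] \,\leq\, \Bigl(\tfrac{p}{p-1}\Bigr)^{p}\E\bigl[|B_\delta|^p\bigr] \,\leq\, \Bigl(\tfrac{p}{p-1}\Bigr)^{p}\E\bigl[|Z|^p\bigr]\,\delta^{p/2}.
\end{equation}
For $p=1$ one reduces to $p=2$ via Jensen's inequality (or alternatively invokes the reflection-principle identity $\sup_{s\in[0,\delta]} B_s \stackrel{d}{=} |B_\delta|$). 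Enlarging $c_{W_p}$ to dominate both constants proves the second estimate.

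There is no substantive obstacle; the only point that requires care is the invocation of the strong Markov property, which needs $\tau$ to be almost surely finite. In our applications $\tau$ will always be one of the stopping times $\tau_n$ constructed in \eqref{EqTauGen}, which are bounded by $T$ and hence a.s.\ finite, so this is automatic.
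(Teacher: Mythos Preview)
Your argument is correct and entirely standard: the strong Markov property reduces both estimates to the corresponding statements for a fresh Brownian motion, and then Gaussian scaling plus Doob's maximal inequality (with the $p=1$ case handled separately) finish the job. Note, however, that the paper does not actually prove this lemma---it is merely recalled from \cite[Lemma~3.3]{PSS2024JMS} for the reader's convenience---so there is no in-paper proof to compare against; your proposal simply supplies the routine verification that the cited reference presumably contains.
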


\begin{lemma}[\text{\cite[Lemma 3.4]{PSS2024JMS}}]\label{BDGaKunita}
Assume that $q\in [2,\infty)$, $a,b\in[0,T]$ with $a<b$, $Z\in\{\operatorname{Id},W,N\}$, and that $Y = (Y(t))_{t\in[a,b]}$ is a predictable stochastic process with respect to $(\F_t)_{t\in[a,b]}$ with
\begin{equation}
\begin{aligned}
\E\Big[\int_a^b |Y(t)|^q \diff t\Big] < \infty. 
\end{aligned}
\end{equation}
Then there exists a constant $\hat c\in(0,\infty)$ such that for all $t\in[a,b]$,
\begin{equation}
\begin{aligned}
\E\Big[ \sup_{s\in[a,t]} \Big|\int_a^s Y(u) \diff Z(u)\Big|^q \Big] \leq \hat c \int_a^t \E[|Y(u)|^q]\diff u. 
\end{aligned}
\end{equation}
\end{lemma}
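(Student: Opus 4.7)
The plan is to handle the three choices of $Z$ separately, in each case combining a standard maximal inequality for the stochastic integral with Hölder's inequality in time to recover the stated bound. Predictability of $Y$ together with $\E[\int_a^b |Y(u)|^q \diff u]<\infty$ is more than enough to ensure all the stochastic integrals below are well defined and, in the martingale cases, are genuine $L^q$-martingales to which standard maximal inequalities apply.

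For $Z=\operatorname{Id}$ the integral is pathwise a Lebesgue integral, so Hölder's inequality in time with exponents $q/(q-1)$ and $q$ yields, for every $s\in[a,t]$,
\[
\Big|\int_a^s Y(u)\diff u\Big|^q \leq (s-a)^{q-1}\int_a^s |Y(u)|^q \diff u \leq T^{q-1}\int_a^t |Y(u)|^q \diff u.
\]
Taking the supremum in $s$, then expectation, and applying Fubini--Tonelli gives the claim with $\hat c = T^{q-1}$. For $Z=W$, the process $M_s = \int_a^s Y(u)\diff W_u$ is a continuous $L^q$-martingale, and the Burkholder--Davis--Gundy inequality produces $c_q\in(0,\infty)$ with
\[
\E\Big[\sup_{s\in[a,t]} |M_s|^q\Big] \leq c_q \E\Big[\Big(\int_a^t |Y(u)|^2 \diff u\Big)^{q/2}\Big].
\]
Since $q/2\geq 1$, one more pathwise Hölder-in-time step gives $(\int_a^t |Y|^2 \diff u)^{q/2}\leq (t-a)^{q/2-1}\int_a^t |Y|^q \diff u$, and Fubini--Tonelli finishes the case with $\hat c = c_q T^{q/2-1}$.

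For $Z=N$, decompose $N_u=\tilde N_u+\lambda u$ so that $\int_a^s Y\diff N=\int_a^s Y\diff\tilde N+\lambda\int_a^s Y\diff u$; the drift contribution is bounded exactly as in the $Z=\operatorname{Id}$ case. For the compensated Poisson martingale, Kunita's first inequality in its pure-jump form produces $\kappa_q\in(0,\infty)$ with
\[
\E\Big[\sup_{s\in[a,t]}\Big|\int_a^s Y\diff\tilde N\Big|^q\Big]\leq \kappa_q\bigg(\E\Big[\Big(\lambda\int_a^t |Y|^2 \diff u\Big)^{q/2}\Big]+\E\Big[\lambda\int_a^t |Y|^q \diff u\Big]\bigg).
\]
The first term is controlled by the same Hölder-in-time argument as in the Brownian case, while the second is already of the required form. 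Combining the two contributions with the drift part through Minkowski's inequality yields a single constant $\hat c$ depending only on $q$, $T$, and $\lambda$. The step I expect to require the most care is this last case: one has to verify that $L^q$-integrability of $Y$ with $q\geq 2$ suffices to legitimize Kunita's inequality, and that the $\int |Y|^2$ term can genuinely be absorbed into $\int \E[|Y|^q]\diff u$ without imposing any additional boundedness or higher-moment hypothesis on $Y$, which is precisely what the condition $q\ge 2$ delivers via Hölder.
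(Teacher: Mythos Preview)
The paper does not supply its own proof of this lemma: it is stated as a recalled result from \cite{PSS2024JMS} (Lemma 3.4 there), so there is no in-paper argument to compare against. Your proof is correct and is precisely the standard route: Hölder-in-time for $Z=\operatorname{Id}$, Burkholder--Davis--Gundy plus Hölder-in-time for $Z=W$, and for $Z=N$ the decomposition $N=\tilde N+\lambda\,\operatorname{Id}$ together with Kunita's first inequality for the compensated Poisson integral, after which both the $\int|Y|^2$ and $\int|Y|^q$ terms collapse into $\int_a^t\E[|Y(u)|^q]\diff u$ via Hölder (this is exactly where $q\ge 2$ is used). The only cosmetic point is that to combine the $\tilde N$- and drift-contributions you can use either Minkowski in $L^q(\Omega)$ or the elementary inequality $|a+b|^q\le 2^{q-1}(|a|^q+|b|^q)$; both give a constant depending only on $q$, $T$, and $\lambda$, as required.
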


The following lemmas are proving moment bounds for the doubly-adaptive quasi-Milstein scheme. The proofs follow the strategy of \cite[Lemma 3.5 - Lemma 3.7]{PSS2024JMS} and can be found in Appendix \ref{appendix}. 

\begin{lemma}\label{MW}
    Let Assumption \ref{assZ} hold. Then for all $p\in\N_0$, $q\in\N$, $t\in[0,T]$, $\delta \in(0,\delta_0]$, $n\in\N$, $k\in\N$, and the constants $c_{W_q}$ as in Lemma \ref{MEW},
    \begin{equation}\label{MWeq1}
        \begin{aligned}
        &\E\Big[\big(1+ \big|Z^{(\delta)}_{\tau_n}\big|^p\big)   |W_t-W_{\tau_n}|^q \mathds{1}_{[\tau_n,\tau_{n+1})}(t)  \mathds{1}_{\{N_t = k\}} \Big]\\
        &\leq c_{W_q} \delta^{\frac{q}{2}} \E\Big[ \big(1+ \big|Z^{(\delta)}_{\tau_n}\big|^p\big) \mathds{1}_{[\tau_n,\tau_{n+1})}(t)  \mathds{1}_{\{N_t = k\}} \Big],
        \end{aligned}
    \end{equation}
    and
    \begin{equation}\label{MWeq2}
        \begin{aligned}
        &\E\Big[ \big(1+ \big|Z^{(\delta)}_{\underline{t}}\big|^p\big)   |W_t-W_{\underline{t}}|^q \Big]
        \leq c_{W_q} \delta^{\frac{q}{2}} \E\Big[ 1+ \big|Z^{(\delta)}_{\underline{t}}\big|^p \Big].
        \end{aligned}
    \end{equation}
\end{lemma}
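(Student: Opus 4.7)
The plan is to condition on the enlarged filtration $\widetilde{\mathbb{F}}_{\tau_n}$. This filtration is chosen so that the Poisson process $N$ is entirely $\widetilde{\mathbb{F}}_0$-measurable while, by Lemma \ref{PropDiscGrid}\ref{STiv}, the shifted process $(W_{\tau_n+s}-W_{\tau_n})_{s\ge 0}$ is still a Brownian motion independent of $\widetilde{\mathbb{F}}_{\tau_n}$. First I would collect the relevant measurability facts: $Z^{(\delta)}_{\tau_n}$ and $\tau_n$ are $\widetilde{\mathbb{F}}_{\tau_n}$-measurable by Lemma \ref{PropDiscGrid}\ref{STii}; all Poisson jump times $\nu_i$ and $N_t$ are $\widetilde{\mathbb{F}}_0$-measurable by construction of $\widetilde{\mathbb{F}}$; hence by \eqref{EqTauGen} the time $\tau_{n+1}$ and the random variable
\begin{equation*}
R := \big(1+|Z^{(\delta)}_{\tau_n}|^p\big)\,\mathds{1}_{[\tau_n,\tau_{n+1})}(t)\,\mathds{1}_{\{N_t=k\}}
\end{equation*}
are $\widetilde{\mathbb{F}}_{\tau_n}$-measurable, and on the support of $R$ one has $0\le t-\tau_n<\tau_{n+1}-\tau_n\le h^\delta(Z^{(\delta)}_{\tau_n})\le\delta$.

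For \eqref{MWeq1} I would then apply the tower property, obtaining $\E[|W_t-W_{\tau_n}|^q R]=\E[R\,\E[|W_t-W_{\tau_n}|^q\mid\widetilde{\mathbb{F}}_{\tau_n}]]$. Writing $W_t-W_{\tau_n}=W^{\tau_n}_{t-\tau_n}$ where $W^{\tau_n}$ is a Brownian motion independent of $\widetilde{\mathbb{F}}_{\tau_n}$, the inner conditional expectation equals the deterministic-time $q$-th absolute moment of a centred Gaussian evaluated at the $\widetilde{\mathbb{F}}_{\tau_n}$-measurable argument $t-\tau_n$; by \eqref{MEW} this is bounded by $c_{W_q}(t-\tau_n)^{q/2}\le c_{W_q}\delta^{q/2}$ on the support of $R$. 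Pulling the constant out of the outer expectation yields \eqref{MWeq1}.

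For \eqref{MWeq2} I would decompose along the almost sure partition $\{[\tau_n,\tau_{n+1})\}_{n\in\mathbb{N}_0}$ of $[0,T]$. Since $\underline{t}=\tau_n$ on $[\tau_n,\tau_{n+1})$,
\begin{equation*}
\E\Big[\big(1+|Z^{(\delta)}_{\underline{t}}|^p\big)|W_t-W_{\underline{t}}|^q\Big]=\sum_{n=0}^{\infty}\E\Big[\big(1+|Z^{(\delta)}_{\tau_n}|^p\big)|W_t-W_{\tau_n}|^q\,\mathds{1}_{[\tau_n,\tau_{n+1})}(t)\Big].
\end{equation*}
Running the same conditioning argument on each summand (now without the factor $\mathds{1}_{\{N_t=k\}}$, which plays no role in the bound) and reassembling $\sum_{n=0}^{\infty}(1+|Z^{(\delta)}_{\tau_n}|^p)\mathds{1}_{[\tau_n,\tau_{n+1})}(t)=1+|Z^{(\delta)}_{\underline{t}}|^p$ delivers \eqref{MWeq2}.

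The main conceptual hurdle is verifying that $\tau_{n+1}$, and hence the event $\{t\in[\tau_n,\tau_{n+1})\}$, is $\widetilde{\mathbb{F}}_{\tau_n}$-measurable; this is precisely why the enlarged filtration $\widetilde{\mathbb{F}}$ is introduced in this section. Under the natural filtration $\mathbb{F}$ the next Poisson jump after $\tau_n$ is not $\mathbb{F}_{\tau_n}$-measurable, so a direct conditioning would fail. The independence of $W$ and $N$ is what lets $W$ retain its Brownian-motion character relative to $\widetilde{\mathbb{F}}$ while $N$ becomes trivially measurable, and it is this combination that powers the estimate.
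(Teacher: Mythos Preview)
Your argument is correct and is actually a bit more streamlined than the paper's. The paper proves \eqref{MWeq1} by conditioning on the \emph{natural} $\sigma$-algebra $\F_{\tau_n}$: it first rewrites $\mathds{1}_{\{N_t=k\}}=\mathds{1}_{\{N_{\tau_n}=k\}}$ on $\{t\in[\tau_n,\tau_{n+1})\}$, bounds $|W_t-W_{\tau_n}|^q$ by $\sup_{s\in[0,\delta]}|W_{\tau_n+s}-W_{\tau_n}|^q$, expresses $\tau_{n+1}$ as a measurable function of $\tau_n$, $Z^{(\delta)}_{\tau_n}$ and the shifted Poisson path $(N_{\tau_n+s}-N_{\tau_n})_{s\ge0}$, and then invokes a freezing argument together with Lemma~\ref{PropDiscGrid}\ref{STiii} (independence of the shifted $W$ and $N$ from $\F_{\tau_n}$ and from each other) to factor the expectation. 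For \eqref{MWeq2} the paper inserts an extra sum over $k$ so that \eqref{MWeq1} can be applied termwise. Your route via $\widetilde{\F}_{\tau_n}$ bypasses the freezing step entirely: because all of $N$ sits in $\widetilde{\F}_0$, the next jump time and hence $\tau_{n+1}$ are already $\widetilde{\F}_{\tau_n}$-measurable, so the whole factor $R$ can be pulled out of the conditional expectation in one go, and Lemma~\ref{PropDiscGrid}\ref{STiv} handles the remaining Brownian moment. This also lets you derive \eqref{MWeq2} by a single sum over $n$ without the auxiliary decomposition in $k$. The paper itself uses exactly this $\widetilde{\F}_{\tau_n}$-measurability of $\tau_{n+1}$ elsewhere (e.g.\ in the proof of Lemma~\ref{AuxLemma}), so your approach is fully consistent with its framework; it simply trades the two-fold independence statement \ref{STiii} for the single statement \ref{STiv} at the price of working in the enlarged filtration.
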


\begin{lemma}\label{FiniteMom}
Let Assumption \ref{assZ} hold. Then for all $\delta\in(0,\delta_0]$ and $p\in[2,\infty)$ there exists a constant $c_\delta \in(0,\infty)$ such that 
\begin{equation}\label{LFM1}
    \begin{aligned}
    \int_0^T \E \big[ 1+ \big|Z^{(\delta)}_{\underline t}\big|^p\big] \diff t
    +\E\Big[ N_T^{p-1} \sum_{n=1}^{N_T + M} \big(1+ \big|Z^{(\delta)}_{\tau_{n}-}\big|^p\big) \Big] \leq c_\delta,
    \end{aligned}
\end{equation}
where $M = \Big\lceil \frac{T}{\delta^2 \log^4(\delta^{-1})} \Big\rceil$.
\end{lemma}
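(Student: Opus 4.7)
The plan is to derive a per-step moment recursion at the grid points and iterate, exploiting the random cut-off $n \leq N_T + M$ on the grid index together with the super-exponential decay of the Poisson tail of $N_T$ to render the resulting sums finite. Throughout, $c_\delta$ is allowed to depend on $\delta$ and $p$.

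For the one-step bound I would start from the scheme identity \eqref{EqDefMSLL} and use the linear growth of $\widetilde\mu,\widetilde\sigma,\widetilde\rho$ (Remark \ref{ConAss}(i)), the uniform bound $\|d_{\widetilde\sigma}\|_\infty \leq L_{\widetilde\sigma}$ (Remark \ref{ConAss}(ii)), and $\tau_{n+1}-\tau_n\leq \delta$ to obtain the pointwise estimate
\[
|Z^{(\delta)}_{\tau_{n+1}-}|^p \leq C_p\bigl(1+|Z^{(\delta)}_{\tau_n}|^p\bigr)\bigl(1 + |W_{\tau_{n+1}}-W_{\tau_n}|^{p} + |W_{\tau_{n+1}}-W_{\tau_n}|^{2p}\bigr).
\]
Since $\tau_n$ is an $\F$-stopping time with $Z^{(\delta)}_{\tau_n}$ being $\F_{\tau_n}$-measurable, and post-$\tau_n$ Brownian increments are independent of $\F_{\tau_n}$ (Lemma \ref{PropDiscGrid}), conditioning on $\F_{\tau_n}$ and applying Lemma \ref{LMEW} to the dominating random variable $\sup_{s\in[0,\delta]}|W_{\tau_n+s}-W_{\tau_n}|$ yields $\E[1+|Z^{(\delta)}_{\tau_{n+1}-}|^p \mid \F_{\tau_n}] \leq K_p(\delta)(1+|Z^{(\delta)}_{\tau_n}|^p)$. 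The jump update \eqref{EqDefMS} then uses only the linear growth of $\widetilde\rho$ and the fact that $N_{\tau_{n+1}}-N_{\tau_{n+1}-}\in\{0,1\}$ to pass the same type of bound to $Z^{(\delta)}_{\tau_{n+1}}$. Iterating gives $\E[1+|Z^{(\delta)}_{\tau_n}|^p] \leq K_p(\delta)^n$ for all $n\in\N_0$, and an analogous argument yields the same estimate for $\E[1+|Z^{(\delta)}_{\tau_n-}|^p]$.

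To assemble \eqref{LFM1} I would rewrite the first summand as
\[
\int_0^T \E[1+|Z^{(\delta)}_{\underline t}|^p]\,dt = \E\Big[\sum_{n=0}^\infty (1+|Z^{(\delta)}_{\tau_n}|^p)(\tau_{n+1}\wedge T - \tau_n\wedge T)\Big] \leq \delta\,\E\Big[\sum_{n=0}^{N_T+M}(1+|Z^{(\delta)}_{\tau_n}|^p)\Big],
\]
using $\tau_{n+1}-\tau_n\leq \delta$ together with the observation that $\tau_n<T$ forces $n\leq N_T+M$, a consequence of the minimum step-size $\delta^2\log^4(\delta^{-1})$ of $h^\delta$ and the fact that each Poisson jump inserts at most one extra grid point. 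A Cauchy--Schwarz step then dominates the right-hand side by $\delta \sum_{n=0}^\infty \E[(1+|Z^{(\delta)}_{\tau_n}|^{2p})]^{1/2}\,\P(N_T+M\geq n)^{1/2}$; the second summand of \eqref{LFM1} is handled in the same way, with the extra factor $N_T^{p-1}$ absorbed via the finite higher moments $\E[N_T^{2(p-1)}]<\infty$ of the Poisson variable (using Hölder rather than Cauchy--Schwarz if convenient).

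The main obstacle is verifying summability: the iterated bound gives geometric growth $K_{2p}(\delta)^{n/2}$ that must be dominated by the Poisson tail $\P(N_T\geq n-M) \leq e^{-\lambda T}(e\lambda T/(n-M))^{n-M}$ for large $n$. This works because the latter decays faster than any geometric sequence, so the series converges and produces a finite $c_\delta$. Technically, care is also needed in the conditioning to exploit the adaptedness provided by Lemma \ref{PropDiscGrid} consistently: the randomness of $\tau_{n+1}-\tau_n$ is absorbed by dominating it by the deterministic $\delta$, while the independence of the post-$\tau_n$ Brownian increments from $\F_{\tau_n}$ is precisely what allows Lemma \ref{LMEW} to be invoked at each step.
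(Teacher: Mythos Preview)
Your proposal is correct and follows the same core mechanism as the paper's proof: a geometric one-step moment recursion at the grid points, combined with the super-exponential decay of the Poisson law of $N_T$ to make the sum over the random number of steps $N_T+M$ converge. The difference lies in how the random cutoff is handled. The paper slices the expectation by the events $\{N_T=k\}$ and derives the recursion $\E[|Z^{(\delta)}_{\tau_n}|^p\mathds{1}_{\{N_T=k\}}]\leq \widetilde c_5(\P(N_T=k)+\E[|Z^{(\delta)}_{\tau_{n-1}}|^p\mathds{1}_{\{N_T=k\}}])$ directly; to keep the indicator $\mathds{1}_{\{N_T=k\}}$ inside while integrating out the Brownian increment it conditions on the enlarged filtration $\widetilde\F_{\tau_{n-1}}$ (with respect to which $N_T$ is measurable), and then closes the argument via the Poisson moment generating function $\sum_k \widetilde c_5^{\,k}\P(N_T=k)=e^{\lambda T(\widetilde c_5-1)}$. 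Your route instead works with the \emph{unconditional} recursion $\E[1+|Z^{(\delta)}_{\tau_n}|^p]\leq K_p(\delta)^n$ and then decouples the moment from the indicator $\mathds{1}_{\{n\leq N_T+M\}}$ by Cauchy--Schwarz (or H\"older, for the term carrying $N_T^{p-1}$), invoking Poisson tail decay rather than the MGF. This avoids the $\widetilde\F$ machinery at the price of doubling or quadrupling the moment exponent in the recursion, which is harmless since the one-step bound holds for every $p$. Both arguments are equally valid; yours is arguably more elementary, the paper's slightly more direct.
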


\begin{lemma}\label{ME}
Let Assumption \ref{assZ} hold. Then for all $p\in[2,\infty)$ there exist constants $c_1, c_2, c_3 \in(0,\infty)$ such that for all $\delta\in(0,\delta_0]$, $\widetilde\xi\in\R$, $\Delta \in[0,T]$ and all $s\in[0,T-\Delta]$ it holds that 
\begin{equation}\label{LME1}
\begin{aligned}
\E\Big[\sup_{t\in[0,T]} \big|Z^{(\delta)}_t\big|^p\Big] \leq c_1;
\end{aligned}
\end{equation}
\begin{equation}\label{LME2}
\begin{aligned}
\E\Big[  \big|Z^{(\delta)}_t- Z^{(\delta)}_{\underline t}\big|^p\Big] \leq c_2 \delta^{\frac{p}{2}};
\end{aligned}
\end{equation}
\begin{equation}\label{LME4}
    \begin{aligned}
    \E\Big[\sup_{t\in[s,s+\Delta]} \big|Z^{(\delta)}_t- Z^{(\delta)}_{s}\big|^p\Big] \leq c_3 \Delta.
    \end{aligned}
\end{equation}
\end{lemma}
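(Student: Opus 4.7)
The plan is to prove the three estimates in the order \eqref{LME1}, \eqref{LME2}, \eqref{LME4}, since the bounds \eqref{LME2} and \eqref{LME4} will use \eqref{LME1} to control moments of the scheme values. In each case the starting point is the integral representation \eqref{SumAppr}, decomposed into drift, combined Brownian/Milstein, and Poisson contributions.

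For \eqref{LME1} I would first localise by the stopping times $\tau_R=\inf\{t\in[0,T]: |Z^{(\delta)}_t|\geq R\}\wedge T$ (so that finiteness of all moments follows from Lemma \ref{FiniteMom} applied to the stopped scheme), and then bound
\begin{equation}
\E\Big[\sup_{s\in[0,t]} |Z^{(\delta)}_{s\wedge\tau_R}|^p\Big]
\end{equation}
by summing four contributions. The drift term is handled by Jensen's inequality giving $T^{p-1}\int_0^t \E[|\widetilde\mu(Z^{(\delta)}_{\underline u})|^p]\diff u$; the pure diffusion term and the Poisson term by Lemma \ref{BDGaKunita}. The delicate piece is the Milstein correction $\widetilde\sigma(Z^{(\delta)}_{\underline u})\,d_{\widetilde\sigma}(Z^{(\delta)}_{\underline u})(W_u-W_{\underline u})$: using $\|d_{\widetilde\sigma}\|_\infty<\infty$ and linear growth of $\widetilde\sigma$ (Remark \ref{ConAss}) together with Lemma \ref{MW}\eqref{MWeq2}, its contribution is bounded by $C\,\delta^{p/2}\int_0^t \E[1+|Z^{(\delta)}_{\underline u}|^p]\diff u$. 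After combining, linear growth converts all integrands into $1+\E[\sup_{s\leq u}|Z^{(\delta)}_{s\wedge\tau_R}|^p]$, so Gronwall yields a bound uniform in $R$, and Fatou as $R\to\infty$ gives \eqref{LME1}.

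For \eqref{LME2} the key observation is that on $\{t\in[\tau_n,\tau_{n+1})\}$ no jump occurs, $\underline t=\tau_n$, and the explicit update \eqref{EqDefMScont} expresses $Z^{(\delta)}_t-Z^{(\delta)}_{\underline t}$ as a sum of three terms built from $(t-\tau_n)\leq \delta$, $(W_t-W_{\tau_n})$, and $(W_t-W_{\tau_n})^2-(t-\tau_n)$. Summing the contributions over $n$ with indicator $\mathds{1}_{[\tau_n,\tau_{n+1})}(t)\mathds{1}_{\{N_t=k\}}$ and invoking Lemma \ref{MW}\eqref{MWeq1} together with the $L^p$ bound on $Z^{(\delta)}_{\tau_n}$ from \eqref{LME1}, each of the three terms is controlled by $C\delta^{p/2}$; the Milstein summand in particular needs the fourth-moment bound $\E[|W_t-W_{\underline t}|^{2p}]\leq c_{W_{2p}}\delta^p$ from Lemma \ref{LMEW}.

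For \eqref{LME4} I would repeat the decomposition of \eqref{LME1} but over the shorter interval $[s,s+\Delta]$, so that the target is a linear-in-$\Delta$ bound. The drift contribution is $\Delta^p\leq T^{p-1}\Delta$ by Jensen; the Brownian integral gives $\hat c\int_s^{s+\Delta}\E[|\widetilde\sigma(Z^{(\delta)}_{\underline u})|^p]\diff u\leq C\Delta$ by Lemma \ref{BDGaKunita} combined with \eqref{LME1}; the Milstein integral is bounded by $C\delta^{p/2}\Delta\leq C\Delta$ via the same application of Lemma \ref{MW} as in \eqref{LME1}. The main obstacle, and what is specific to the jump-diffusion setting, is the Poisson integral: a naive bound using $N_{s+\Delta}-N_s$ would produce $\Delta^{p/2}$ rather than $\Delta$ for small $\Delta$. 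I would resolve this by splitting $dN_u=d\widetilde N_u+\lambda\diff u$, applying Lemma \ref{BDGaKunita} to the compensated martingale to obtain $\hat c\int_s^{s+\Delta}\E[|\widetilde\rho(Z^{(\delta)}_{u-})|^p]\diff u\leq C\Delta$, and bounding the drift part of the compensation by $(\lambda\Delta)^{p-1}\int_s^{s+\Delta}\E[|\widetilde\rho(Z^{(\delta)}_{u-})|^p]\diff u\leq C\Delta^p\leq C\Delta$ since $\Delta\leq T$. Summing the four contributions yields \eqref{LME4}.
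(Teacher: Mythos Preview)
Your proposal is correct and follows essentially the same decomposition-plus-Gronwall strategy as the paper. Two minor simplifications in the paper's version are worth pointing out. For \eqref{LME1}, rather than localising with $\tau_R$ and passing to the limit via Fatou, the paper first invokes Lemma \ref{FiniteMom} directly to obtain $\E[\sup_{t\in[0,s]}|Z^{(\delta)}_t|^p]<\infty$ (with a possibly $\delta$-dependent constant), and then re-runs the same estimates in Gronwall form to get the $\delta$-uniform bound; this avoids the overshoot technicality that arises when stopping a jump process. For \eqref{LME4}, your decomposition $dN_u = d\widetilde N_u + \lambda\,du$ is unnecessary: Lemma \ref{BDGaKunita} is stated for $Z\in\{\operatorname{Id},W,N\}$ and already delivers
\[
\E\Big[\sup_{t\in[s,s+\Delta]}\Big|\int_s^t \widetilde\rho(Z^{(\delta)}_{u-})\,dN_u\Big|^p\Big]\leq \hat c\int_s^{s+\Delta}\E\big[|\widetilde\rho(Z^{(\delta)}_{u-})|^p\big]\,du\leq C\Delta,
\]
so the paper handles the Poisson term in one line.
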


\subsection{Occupation time estimates}

To prove occupation time estimates, we combine and adapt strategies presented in \cite{PSS2024JMS} and \cite{Yaroslavtseva2022}. While the results and proofs of Lemma \ref{EVofOTF}, Lemma \ref{LemProbEst}, and Lemma \ref{LemExepSet} are closely based on the corresponding lemmas in \cite{Yaroslavtseva2022}, Lemma \ref{Lem:LocTime} and Lemma \ref{MWwithInd} present results with proofs heavily influenced by the additional jump noise that make use of similar ideas as in \cite{PSS2024JMS}.

In a first step, we estimate the local time of $Z^{(\delta)}$. For this we denote for all $a\in\R$ by $(L_t^a(Z^{(\delta)}))_{t\in[0,T]}$ the local time of $Z^{(\delta)}$ in $a$. The local time of SDE \eqref{eq:TSDE} is defined for all $t\in[0,T]$ and all $a\in\R$ by
\begin{equation}
    \begin{aligned}
    L_t^a(Z^{(\delta)}) = 2 \Big(&(Z_t^{(\delta)}-a)^+ - (Z_0^{(\delta)}-a)^+ - \int_0^t \mathds{1}_{\{Z^{(\delta)}_{u-} >a\}} \diff Z^{(\delta)}_{u} \\
    &- \int_0^t \Big( \big( Z^{(\delta)}_{u-} + \widetilde\rho\big(Z^{(\delta)}_{u-}\big)- a\big)^+ - \big( Z^{(\delta)}_{u-} - a\big)^+ - \mathds{1}_{\{Z^{(\delta)}_{u-} >a\}} \widetilde\rho\big(Z^{(\delta)}_{u-}\big)\Big) \diff N_u \Big),
    \end{aligned}
\end{equation}
where  $(\cdot)^+ = \max\{\cdot,0\}$, see \cite[Definition 156]{situ2005}.

\begin{lemma}\label{Lem:LocTime}
    Let Assumption \ref{assZ} hold. Then there exists a constant $c_4\in(0,\infty)$ such that for all $\delta\in(0,\delta_0]$, all $t\in[0,T]$ and all $a\in\R$, 
    \begin{equation}\label{LocTimeEst}
        \begin{aligned}
        &\E\big[L^a_t \big(Z^{(\delta)}\big)\big] 
        \leq c_4.
        \end{aligned}
    \end{equation}
\end{lemma}

\begin{proof}
     By \cite[Lemma 158]{situ2005} for all $a\in\R$, all $\delta \in(0,\delta_0]$ and all $t\in[0,T]$,
    \begin{equation}\label{pOTE8}
        \begin{aligned}
        &\big| Z^{(\delta)}_t - a\big|\\
        &= \big| \widetilde\xi -a\big|
        +\int_0^t \operatorname{sgn}\big(Z^{(\delta)}_{u-} -a\big) \diff Z^{(\delta)}_{u}
        +L^a_t \big(Z^{(\delta)}\big) \\
        &\quad+ \int_0^t \Big( \big| Z^{(\delta)}_{u-} + \widetilde\rho\big(Z^{(\delta)}_{u-}\big)- a\big| - \big| Z^{(\delta)}_{u-} - a\big| - \operatorname{sgn}\big(Z^{(\delta)}_{u-} -a\big) \widetilde\rho\big(Z^{(\delta)}_{u-}\big)\Big) \diff N_u.
        \end{aligned}
    \end{equation}
Here we denote $\operatorname{sgn}(x) = \mathds{1}_{(0,\infty)}(x) - \mathds{1}_{(-\infty,0]}(x)$ for all $x\in\R$. By \eqref{pOTE8} we obtain for all $a\in\R$, all $\delta \in(0,\delta_0]$ and all $t\in[0,T]$,
\begin{equation}\label{pOTE9}
    \begin{aligned}
    &L^a_t \big(Z^{(\delta)}\big) = \big|L^a_t \big(Z^{(\delta)}\big)\big|\\
    &\leq \Big|\big| Z^{(\delta)}_t - a\big| - \big| \widetilde\xi -a\big|\Big| 
    + \Big|\int_0^t \operatorname{sgn}\big(Z^{(\delta)}_{u-} -a\big) \diff Z^{(\delta)}_{u}\Big|\\
    &\quad + \Big|\int_0^t \Big( \big| Z^{(\delta)}_{u-} + \widetilde\rho\big(Z^{(\delta)}_{u-}\big)- a\big| - \big| Z^{(\delta)}_{u-} - a\big| - \operatorname{sgn}\big(Z^{(\delta)}_{u-} -a\big) \widetilde\rho\big(Z^{(\delta)}_{u-}\big)\Big) \diff N_u\Big|.
    \end{aligned}
\end{equation}
Estimating the expectation of each summand on the right hand side of \eqref{pOTE9} separately, we obtain for the first one using Lemma \ref{ME} \eqref{LME1} that there exists a constant $\widetilde c_1\in(0,\infty)$ such that for all $a\in\R$, all $\delta \in(0,\delta_0]$ and all $t\in[0,T]$,
\begin{equation}\label{pOTE10}
    \begin{aligned}
    &\E\Big[\Big|\big| Z^{(\delta)}_t - a\big| - \big| \widetilde\xi -a\big|\Big|\Big]
    \leq \widetilde c_1.
    \end{aligned}
\end{equation}
For the second summand of \eqref{pOTE9} we obtain by \eqref{SumAppr} for all $a\in\R$, all $\delta \in(0,\delta_0]$ and all $t\in[0,T]$,
\begin{equation}\label{pOTE12}
    \begin{aligned}
    &\Big|\int_0^t \operatorname{sgn}\big(Z^{(\delta)}_{u-} -a\big) \diff Z^{(\delta)}_{u}\Big|\\
    &\leq \Big|\int_0^t \operatorname{sgn}\big(Z^{(\delta)}_{u-} -a\big)\Big(\sum_{n=0}^{\infty} \widetilde \mu \big(Z^{(\delta)}_{\tau_n}\big)\mathds{1}_{(\tau_n,\tau_{n+1}]}(u)\Big) \diff u\Big|\\
    &\quad+\Big|\int_0^t \operatorname{sgn}\big(Z^{(\delta)}_{u-} -a\big)\Big(\sum_{n=0}^{\infty} \Big( \widetilde\sigma \big(Z^{(\delta)}_{\tau_n}\big)+ \int_{\tau_n}^u \widetilde\sigma \big(Z^{(\delta)}_{\tau_n}\big)d_{\widetilde\sigma} \big(Z^{(\delta)}_{\tau_n}\big)\diff W_s \Big)\mathds{1}_{(\tau_n,\tau_{n+1}]}(u)\Big) \diff W_u\Big|\\
    &\quad+\Big|\int_0^t \operatorname{sgn}\big(Z^{(\delta)}_{u-} -a\big)\widetilde\rho\big(Z^{(\delta)}_{u-}\big) \diff N_u\Big|.
    \end{aligned}
\end{equation}
By Lemma \ref{ME} \eqref{LME1}, we get for all $a\in\R$, all $\delta \in(0,\delta_0]$ and all $t\in[0,T]$,
\begin{equation}\label{pOTE13}
    \begin{aligned}
    &\E\Big[\Big|\int_0^t \operatorname{sgn}\big(Z^{(\delta)}_{u-} -a\big)\Big(\sum_{n=0}^{\infty} \widetilde \mu \big(Z^{(\delta)}_{\tau_n}\big)\mathds{1}_{(\tau_n,\tau_{n+1}]}(u)\Big) \diff u\Big|\Big]
    \leq c_{\widetilde\mu}\, \E\Big[\int_0^t 1+ \big|Z^{(\delta)}_{\underline u}\big|\diff u\Big]\\
    &\leq 2 c_{\widetilde\mu} T \,\E\Big[1+ \sup_{v\in[0,T]}\big|Z^{(\delta)}_{v}\big|^2\Big] 
    \leq 2 c_{\widetilde\mu} T (1+ c_1).
    \end{aligned}
\end{equation}
Using Cauchy-Schwarz inequality, Itô isometry, Lemma \ref{MW} \eqref{MWeq2}, and Lemma \ref{ME} \eqref{LME1} we obtain for all $a\in\R$, all $\delta \in(0,\delta_0]$ and all $t\in[0,T]$,
\begin{equation}\label{pOTE15}
    \begin{aligned}
    &\E\Big[\Big|\int_0^t \operatorname{sgn}\big(Z^{(\delta)}_{u-} -a\big)\Big(\sum_{n=0}^{\infty} \Big( \widetilde\sigma \big(Z^{(\delta)}_{\tau_n}\big) + \int_{\tau_n}^u \widetilde\sigma \big(Z^{(\delta)}_{\tau_n}\big)d_{\widetilde\sigma} \big(Z^{(\delta)}_{\tau_n}\big)\diff W_s \Big)\mathds{1}_{(\tau_n,\tau_{n+1}]}(u)\Big) \diff W_u\Big|\Big]\\
    &\leq \E\Big[\int_0^t \sum_{n=0}^{\infty} \Big|\widetilde\sigma \big(Z^{(\delta)}_{\tau_n}\big) + \int_{\tau_n}^u \widetilde\sigma \big(Z^{(\delta)}_{\tau_n}\big)d_{\widetilde\sigma} \big(Z^{(\delta)}_{\tau_n}\big)\diff W_s \Big|^2 \mathds{1}_{(\tau_n,\tau_{n+1}]}(u)\diff u\Big]^{\frac{1}{2}}\\
    &\leq \Big( 4 c_{\widetilde\sigma}^2 \int_0^t \E\Big[  1 + \big|Z^{(\delta)}_{\underline u}\big|^2 \Big]\diff u
    + 4 c_{\widetilde\sigma}^2 L_{\widetilde\sigma}^2 \int_0^t \E\Big[ 
    \big(1 + \big|Z^{(\delta)}_{\underline u}\big|^2 \big) (W_u -W_{\underline u})^2\Big]\diff u\Big) ^{\frac{1}{2}}\\
    &\leq \Big( 4 c_{\widetilde\sigma}^2 \int_0^t \E\Big[1+ \sup_{t\in[0,T]}\big|Z^{(\delta)}_{t}\big|^2\Big]\diff u
    + 4 c_{\widetilde\sigma}^2 L_{\widetilde\sigma}^2 c_{W_2} \delta \int_0^t \E\Big[  1 + \big|Z^{(\delta)}_{\underline u}\big|^2 \Big]\diff u\Big) ^{\frac{1}{2}}\\
    &\leq \Big( 4 c_{\widetilde\sigma}^2 T + 4 c_{\widetilde\sigma}^2 L_{\widetilde\sigma}^2 c_{W_2} T\Big)^\frac{1}{2} (1+c_1)^\frac{1}{2}.
    \end{aligned}
\end{equation}
Using Cauchy-Schwarz inequality, Lemma \ref{BDGaKunita}, and Lemma \ref{ME} \eqref{LME1} we obtain that there exists $\hat c\in(0,\infty)$ such that for all $a\in\R$, all $\delta \in(0,\delta_0]$ and all $t\in[0,T]$,
\begin{equation}\label{pOTE16}
    \begin{aligned}
    &\E\Big[\Big|\int_0^t \operatorname{sgn}\big(Z^{(\delta)}_{u-} -a\big)\widetilde\rho\big(Z^{(\delta)}_{u-}\big) \diff N_u\Big|\Big]
    \leq \Big(\hat c\,\E\Big[\int_0^t\big| \operatorname{sgn}\big(Z^{(\delta)}_{u-} -a\big)\big|^2\big|\widetilde\rho\big(Z^{(\delta)}_{u-}\big)\big|^2 \diff u\Big]\Big)^\frac{1}{2}\\
    &\leq \Big( 2\hat c\, c_{\widetilde\rho}^2 \int_0^t\E\Big[ \big( 1+ \big|Z^{(\delta)}_{u-}\big|^2\big) \Big]\diff u\Big)^\frac{1}{2}
    \leq \big(2\hat c\, c_{\widetilde\rho}^2  T ( 1+ c_1)\big)^\frac{1}{2}.
    \end{aligned}
\end{equation}
By \eqref{pOTE12}, \eqref{pOTE13}, \eqref{pOTE15}, and \eqref{pOTE16} there exists a constant $\widetilde c_2\in(0,\infty)$ such that for all $a\in\R$, all $\delta \in(0,\delta_0]$ and all $t\in[0,T]$,
\begin{equation}\label{pOTE17}
    \begin{aligned}
    &\E\Big[\Big|\int_0^t \operatorname{sgn}\big(Z^{(\delta)}_{u-} -a\big) \diff Z^{(\delta)}_{u}\Big|\Big]
    \leq \widetilde c_2.
    \end{aligned}
\end{equation}
For the expectation of the third summand of \eqref{pOTE9} we apply Cauchy-Schwarz inequality, Lemma \ref{BDGaKunita}, and Lemma  \ref{ME} \eqref{LME1} to obtain that for all $a\in\R$, all $\delta \in(0,\delta_0]$ and all $t\in[0,T]$,
\begin{equation}\label{pOTE18}
    \begin{aligned}
    & \E\Big[\Big|\int_0^t \Big( \big| Z^{(\delta)}_{u-} + \widetilde\rho\big(Z^{(\delta)}_{u-}\big)- a\big| - \big| Z^{(\delta)}_{u-} - a\big| - \operatorname{sgn}\big(Z^{(\delta)}_{u-} -a\big) \widetilde\rho\big(Z^{(\delta)}_{u-}\big)\Big) \diff N_u
    \Big|\Big]\\
    &\leq \hat c\, \E\Big[\int_0^t \Big| \big| Z^{(\delta)}_{u-} + \widetilde\rho\big(Z^{(\delta)}_{u-}\big)- a\big| - \big| Z^{(\delta)}_{u-} - a\big| - \operatorname{sgn}\big(Z^{(\delta)}_{u-} -a\big) \widetilde\rho\big(Z^{(\delta)}_{u-}\big)\Big|^2 \diff u
    \Big]^\frac{1}{2}\\
    &\leq \hat c\, \E\Big[2\, \int_0^t \Big| \big| Z^{(\delta)}_{u-} + \widetilde\rho\big(Z^{(\delta)}_{u-}\big)- a\big| - \big| Z^{(\delta)}_{u-} - a\big|\Big|^2 + \Big| \operatorname{sgn}\big(Z^{(\delta)}_{u-} -a\big) \widetilde\rho\big(Z^{(\delta)}_{u-}\big)\Big|^2 \diff u
    \Big]^\frac{1}{2}\\
    &\leq \hat c\, \Big(4\, \int_0^t \E\Big[ \big|\widetilde\rho\big(Z^{(\delta)}_{u-}\big)\big|^2\Big]\diff u\Big)^\frac{1}{2}
    \leq \hat c\, \Big(8\,c_{\widetilde\rho}^2 \int_0^t \E\Big[ 1+ \big|Z^{(\delta)}_{u-}\big|^2\Big]\diff u\Big)^\frac{1}{2}
    \leq \hat c\, \Big(8\,c_{\widetilde\rho}^2\, T\, (1+ c_1)\Big)^\frac{1}{2}.
    \end{aligned}
\end{equation}
By \eqref{pOTE9}, \eqref{pOTE10}, \eqref{pOTE17}, and \eqref{pOTE18}  we obtain that there exists a constant $ c_4 \in(0,\infty)$ such that for all $a\in\R$, all $\delta \in(0,\delta_0]$ and all $t\in[0,T]$,
\begin{equation}\label{pOTE19}
    \begin{aligned}
    &\E\big[L^a_t \big(Z^{(\delta)}\big)\big] 
    \leq c_4.
    \end{aligned}
\end{equation}
\end{proof}

\begin{lemma}\label{EVofOTF}
    Let Assumption \ref{assZ} hold, let $f\colon[0,\infty)\to[0,\infty)$ be a $\mathcal{B}([0,\infty))/\mathcal{B}([0,\infty))$-measurable and bounded function, and let $\gamma\in(0,3/2)$. Then there exists a constant $c_5\in(0,\infty)$ such that for all $\delta\in(0,\delta_0]$ and all $\varepsilon \in(0,\varepsilon_0]$,
    \begin{equation}
        \begin{aligned}
            \E\Big[\int_0^T f(d(Z^{(\delta)}_t,\Theta))\mathds{1}_{\Theta^\varepsilon}(Z^{(\delta)}_t)\diff t]
            \leq c_5 \int_0^\varepsilon f(x)\diff x + c_5 \sup_{x\in[0,\varepsilon]}f(x) \big(\varepsilon^{\frac{3}{2}-\gamma } +\delta^{\frac{3}{2}-\gamma}\big).
        \end{aligned}
    \end{equation}
\end{lemma}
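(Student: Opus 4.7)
The plan is to decompose the integrand by the size of the diffusion density $A_t^2$ of the continuous part of $[Z^{(\delta)}]$. On the set where $A_t^2$ is bounded below, the occupation time formula reduces the problem to the local-time estimate just proved. On the complement one pays a factor $\sup_{[0,\varepsilon]}f$ against a probability that is small thanks to the adaptive step size $h^\delta$ being small on neighbourhoods of $\Theta$.

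Writing $g(x) = f(d(x,\Theta))\mathds{1}_{\Theta^\varepsilon}(x)$, the left-hand side equals $\E\big[\int_0^T g(Z^{(\delta)}_t)\,\diff t\big]$, and the continuous quadratic variation of $Z^{(\delta)}$ reads $[Z^{(\delta)}]^c_t = \int_0^t A_s^2\,\diff s$ with
\[
A_s = \widetilde\sigma(Z^{(\delta)}_{\underline s}) + \widetilde\sigma(Z^{(\delta)}_{\underline s})\,d_{\widetilde\sigma}(Z^{(\delta)}_{\underline s})\,(W_s - W_{\underline s}).
\]
By Assumption \ref{assZ} \ref{Zv} and Lipschitz continuity of $\widetilde\sigma$, after possibly shrinking $\varepsilon_0$ there exists $c_\sigma>0$ with $|\widetilde\sigma(x)|\geq c_\sigma$ for all $x\in\Theta^{2\varepsilon_0}$. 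Setting $G_t = \{A_t^2\geq c_\sigma^2/4\}$, I split the expected integral over $G_t$ and $G_t^c$. On $G_t$ the bound $A_t^{-2}\leq 4 c_\sigma^{-2}$, together with the fact that $g(Z^{(\delta)}_t) = g(Z^{(\delta)}_{t-})$ for $\diff t$-a.e. $t$ and the occupation time formula for general semimartingales applied to the continuous part, gives
\[
\int_0^T g(Z^{(\delta)}_t)\mathds{1}_{G_t}\,\diff t \leq \frac{4}{c_\sigma^2}\int_0^T g(Z^{(\delta)}_{t-})\,\diff [Z^{(\delta)}]^c_t = \frac{4}{c_\sigma^2}\int_\R g(a)\,L^a_T(Z^{(\delta)})\,\diff a.
\]
Taking expectation, the bound $\E[L^a_T(Z^{(\delta)})]\leq c_4$ from the previous lemma combined with $\int_{\Theta^\varepsilon}f(d(a,\Theta))\,\diff a\leq 2\widetilde m\int_0^\varepsilon f(x)\,\diff x$ yields the $\int_0^\varepsilon f(x)\,\diff x$ summand of the claim.

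For the contribution over $G_t^c$, I bound $f$ by $\sup_{[0,\varepsilon]}f$ and, for a small $\gamma'\in(0,\tfrac12)$ chosen in terms of $\gamma$ and threshold $\eta=\max\{\varepsilon,\delta^{1/2-\gamma'}\}$, decompose
\[
\mathds{1}_{\Theta^\varepsilon}(Z^{(\delta)}_t) \leq \mathds{1}_{\Theta^{2\eta}}(Z^{(\delta)}_{\underline t}) + \mathds{1}_{\{|Z^{(\delta)}_t - Z^{(\delta)}_{\underline t}|\geq\eta\}}.
\]
The second term is handled by Lemma \ref{ME} \eqref{LME2} and Markov's inequality with an arbitrarily large moment $p$: since $\eta\geq\delta^{1/2-\gamma'}$ one gets a bound of order $T\delta^{p\gamma'}$, which for $p$ large enough is $\leq C_\gamma\delta^{3/2-\gamma}$. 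For the first term, on $\{Z^{(\delta)}_{\underline t}\in\Theta^{2\eta}\}$ one has $|\widetilde\sigma(Z^{(\delta)}_{\underline t})|\geq c_\sigma$, and $G_t^c$ therefore forces $|W_t - W_{\underline t}|\geq K := 1/(2L_{\widetilde\sigma})$. Conditioning on $\F_{\tau_n}$ via Lemma \ref{PropDiscGrid}, applying Markov's inequality with exponent $p$ and invoking \eqref{MWeq2} bounds the conditional expectation of $\int_{\tau_n}^{\tau_{n+1}}\mathds{1}_{\{|W_t-W_{\tau_n}|\geq K\}}\,\diff t$ by $CK^{-p}h^\delta(Z^{(\delta)}_{\tau_n})^{p/2+1}$. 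The adaptive step size satisfies $h^\delta(x)\leq C\eta^2$ (up to logarithmic factors) on $\Theta^{2\eta}$, and $\sum_n h^\delta(Z^{(\delta)}_{\tau_n})\leq T + \lambda T\delta$. Pulling out $(\max_{\Theta^{2\eta}}h^\delta)^{p/2}$ and summing gives an estimate of order $\eta^p\leq 2^p(\varepsilon^p+\delta^{p(1/2-\gamma')})$, and absorbing the logarithmic factors through $\delta^s\log^q(\delta^{-1})\leq C_\gamma\delta^{s-\gamma}$ delivers the stated $\varepsilon^{3/2-\gamma}+\delta^{3/2-\gamma}$.

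The main obstacle will be the bookkeeping in the bad-event estimate: the step-size bound on $\Theta^{2\eta}$, the Gaussian-type tail of $W_t-W_{\underline t}$, and the deviation of $Z^{(\delta)}_t$ from $Z^{(\delta)}_{\underline t}$ all couple $\varepsilon$ and $\delta$ through $\eta$, so the parameters $\eta$ and $p$ must be tuned simultaneously to push both the $\varepsilon$- and the $\delta$-term below the target exponent $3/2$ up to the arbitrary logarithmic loss encoded by $\gamma$. This is also the only place where the particular shape of $h^\delta$ on the three regions $\Theta^{\varepsilon_2^\delta}$, $\Theta^{\varepsilon_1^\delta}\setminus\Theta^{\varepsilon_2^\delta}$, and $\R\setminus\Theta^{\varepsilon_1^\delta}$ is really used, and it is where the approach here departs from the plain jump-adapted setting of \cite{PSS2024JMS} and the pure-diffusion setting of \cite{Yaroslavtseva2022}.
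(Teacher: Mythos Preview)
Your approach is correct in outline but takes a genuinely different route from the paper. The paper does \emph{not} split on the event $\{A_t^2\geq c_\sigma^2/4\}$; instead it multiplies and divides by $\widetilde\sigma^2(Z^{(\delta)}_t)$ (evaluated at the \emph{current} time, which is bounded below on $\Theta^{\widetilde\varepsilon}$), writes $\widetilde\sigma^2(Z^{(\delta)}_t)=A_t^2+\bigl(\widetilde\sigma^2(Z^{(\delta)}_t)-A_t^2\bigr)$, applies the occupation time formula to the $A_t^2$ part, and bounds the remainder in $L^{q/(q-1)}$ by $C\delta^{1/2}$ via Lemma~\ref{ME} \eqref{LME2} and Lemma~\ref{MW}. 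This is paired, via H\"older, with $(\int_0^T\P(Z^{(\delta)}_t\in\Theta^\varepsilon)\,\diff t)^{1/q}$. Taking $f\equiv 1$ gives a self-referential inequality for this occupation probability, which after one iteration and Young's inequality yields the exponents $3/2-\gamma$. The paper's argument therefore uses \emph{only} that $h^\delta\leq\delta$ and never the particular three-region shape of $h^\delta$.

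Your argument, by contrast, \emph{relies} on the adaptive step size: on the bad set you translate $G_t^c\cap\{Z^{(\delta)}_{\underline t}\in\Theta^{2\eta}\}$ into $\{|W_t-W_{\underline t}|\geq K\}$ and then exploit $h^\delta\leq C\eta^2$ on $\Theta^{2\eta}$ to make this Gaussian tail small. This is a valid alternative for the scheme at hand, and is arguably more direct (no bootstrap), but it is less general. In particular, your closing remark that ``this is the only place where the particular shape of $h^\delta$ \dots\ is really used'' is a misconception: the paper's proof shows Lemma~\ref{EVofOTF} holds for any discretisation with maximal step $\delta$; the adaptivity enters only later, in Lemmas~\ref{LemProbEst} and~\ref{LemExepSet}. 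Two minor points to tighten in your sketch: you cannot ``shrink $\varepsilon_0$'' (it is fixed), so you should instead prove the estimate for $\varepsilon\leq\widetilde\varepsilon$ and treat $\varepsilon\in(\widetilde\varepsilon,\varepsilon_0]$ by the trivial bound $T\sup f\cdot\widetilde\varepsilon^{-3/2}\varepsilon^{3/2}$ (as the paper does); and the sum $\sum_n h^\delta(Z^{(\delta)}_{\tau_n})$ is $\leq T+(N_T+1)\delta$ pathwise, not $T+\lambda T\delta$.
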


\begin{proof}
    It is enough to show that for all $k\in\{1,\ldots,\widetilde m\}$ there exists a constant $\widetilde c_1\in(0,\infty)$ such that for all $\delta\in(0,\delta_0]$ and all $\varepsilon \in(0,\varepsilon_0]$,
    \begin{equation}
        \begin{aligned}
            \E\Big[\int_0^T f(|Z^{(\delta)}_t-\zeta_k| )\mathds{1}_{(\zeta_k-\varepsilon, \zeta_k + \varepsilon)}(Z^{(\delta)}_t)\diff t]
            \leq \widetilde c_1 \int_0^\varepsilon f(x)\diff x + \widetilde c_1 \sup_{x\in[0,\varepsilon]}f(x) \big(\varepsilon^{\frac{3}{2}-\gamma } +\delta^{\frac{3}{2}-\gamma}\big).
        \end{aligned}
    \end{equation}
    Hence we fix $k\in\{1,\ldots,\widetilde m\}$. 
    Since $\widetilde\sigma(\zeta_k)\neq 0$, there exist $\widetilde \varepsilon, \widetilde c_2 \in(0,\infty)$ such that
    \begin{equation}\label{pOTE26}
        \begin{aligned}
        &\inf_{\substack{z\in\R\\|z-\zeta_k|<\widetilde \varepsilon} } \big|\widetilde\sigma(z)\big|^2 >\widetilde c_2
        \end{aligned}
    \end{equation}
    and we obtain for all $\delta\in(0,\delta_0]$ and all $\varepsilon \in(0, \widetilde \varepsilon \wedge \varepsilon_0]$,
    \begin{equation}
        \begin{aligned}\label{pOTE27}
            &\E\Big[\int_0^T f(|Z^{(\delta)}_{t}-\zeta_k|)\mathds{1}_{(\zeta_k-\varepsilon, \zeta_k+\varepsilon)}(Z^{(\delta)}_{t}) \diff 
            t \Big]\\
            &\leq \frac{1}{\widetilde c_2} \E\Big[\int_0^T f(|Z^{(\delta)}_{t}-\zeta_k|)\mathds{1}_{(\zeta_k-\varepsilon, \zeta_k+\varepsilon)}(Z^{(\delta)}_{t})\widetilde\sigma^2 \big(Z^{(\delta)}_{t}\big) \diff 
            t \Big]\\
            &\leq  \frac{1}{\widetilde c_2} \E\Big[\int_0^T f(|Z^{(\delta)}_{t}-\zeta_k|)\mathds{1}_{(\zeta_k-\varepsilon, \zeta_k+\varepsilon)}(Z^{(\delta)}_{t})\\ 
            &\quad\quad\quad\quad \quad\quad\quad\quad \Big(\sum_{n=0}^{\infty}\Big( \widetilde\sigma \big(Z^{(\delta)}_{\tau_n}\big) + \int_{\tau_n}^t \widetilde\sigma \big(Z^{(\delta)}_{\tau_n}\big)d_{\widetilde\sigma} \big(Z^{(\delta)}_{\tau_n}\big)\diff W_s \Big)\mathds{1}_{(\tau_n,\tau_{n+1}]}(t) \Big)^2 \diff t \Big]\\
            &\quad +  \frac{1}{\widetilde c_2} \E\Big[\int_0^T f(|Z^{(\delta)}_{t}-\zeta_k|)\mathds{1}_{(\zeta_k-\varepsilon, \zeta_k+\varepsilon)}(Z^{(\delta)}_{t})\\
            &\quad\quad\quad\quad \quad\quad\quad\quad \Big( \widetilde\sigma^2 \big(Z^{(\delta)}_{t}\big) - \Big(\sum_{n=0}^{\infty} \Big( \widetilde\sigma \big(Z^{(\delta)}_{\tau_n}\big) + \int_{\tau_n}^t \widetilde\sigma \big(Z^{(\delta)}_{\tau_n}\big)d_{\widetilde\sigma} \big(Z^{(\delta)}_{\tau_n}\big)\diff W_s \Big)\mathds{1}_{(\tau_n,\tau_{n+1}]}(t) \Big)^2 \Big)\diff t \Big].
        \end{aligned}
    \end{equation}
    
The continuous martingale part of \eqref{SumAppr} is given by
\begin{equation}\label{pOTE20}
    \begin{aligned}
    &\Big(\int_0^t \sum_{n=0}^{\infty} \Big( \widetilde\sigma \big(Z^{(\delta)}_{\tau_n}\big) + \int_{\tau_n}^u \widetilde\sigma \big(Z^{(\delta)}_{\tau_n}\big)d_{\widetilde\sigma} \big(Z^{(\delta)}_{\tau_n}\big)\diff W_s \Big)\mathds{1}_{(\tau_n,\tau_{n+1}]}(u)\diff W_u\Big)_{t\in[0,T]}
    \end{aligned}
\end{equation}
and its predictable quadratic variation by 
\begin{equation}\label{pOTE20a}
    \begin{aligned}
    &\Big\langle \int_0^\cdot \sum_{n=0}^{\infty} \Big( \widetilde\sigma \big(Z^{(\delta)}_{\tau_n}\big) + \int_{\tau_n}^u \widetilde\sigma \big(Z^{(\delta)}_{\tau_n}\big)d_{\widetilde\sigma} \big(Z^{(\delta)}_{\tau_n}\big)\diff W_s \Big)\mathds{1}_{(\tau_n,\tau_{n+1}]}(u)\diff W_u\Big\rangle_t\\
    &= \int_0^t \Big(\sum_{n=0}^{\infty} \Big( \widetilde\sigma \big(Z^{(\delta)}_{\tau_n}\big) + \int_{\tau_n}^u \widetilde\sigma \big(Z^{(\delta)}_{\tau_n}\big)d_{\widetilde\sigma} \big(Z^{(\delta)}_{\tau_n}\big)\diff W_s \Big)\mathds{1}_{(\tau_n,\tau_{n+1}]}(u)\Big)^2\diff u,
    \end{aligned}
\end{equation}
see \cite[Theorem 88]{situ2005}.
Hence we use that $ x\mapsto f(|x-\zeta_k|)\mathds{1}_{(\zeta_k-\varepsilon, \zeta_k+\varepsilon)}(x)$ is a Borel-measurable and bounded function, apply \cite[Lemma 159]{situ2005} and Lemma \ref{LocTimeEst} to obtain for all $\delta\in(0,\delta_0]$, all $\varepsilon \in(0, \widetilde \varepsilon \wedge \varepsilon_0]$, and all $t\in[0,T]$,
\begin{equation}\label{pOTE21}
    \begin{aligned}
    &\E\Big[\int_0^t f(|Z^{(\delta)}_{u}-\zeta_k|)\mathds{1}_{(\zeta_k-\varepsilon, \zeta_k+\varepsilon)}(Z^{(\delta)}_{u})\\
    &\quad\quad\quad\quad \quad\quad\quad\quad \quad\quad\quad\quad \cdot \Big(\sum_{n=0}^{\infty} \Big( \widetilde\sigma \big(Z^{(\delta)}_{\tau_n}\big) + \int_{\tau_n}^u \widetilde\sigma \big(Z^{(\delta)}_{\tau_n}\big)d_{\widetilde\sigma} \big(Z^{(\delta)}_{\tau_n}\big)\diff W_s \Big)\mathds{1}_{(\tau_n,\tau_{n+1}]}(u) \Big)^2 \diff u \Big]\\
    &=\E\Big[\int_0^t f(|Z^{(\delta)}_{u}-\zeta_k|)\mathds{1}_{(\zeta_k-\varepsilon, \zeta_k+\varepsilon)}(Z^{(\delta)}_{u}) \diff \Big\langle \int_0^\cdot \sum_{n=0}^{\infty} \Big( \widetilde\sigma \big(Z^{(\delta)}_{\tau_n}\big) \\
    &\quad\quad\quad\quad \quad\quad\quad\quad \quad\quad\quad\quad \quad\quad\quad\quad \quad\quad\quad+ \int_{\tau_n}^v \widetilde\sigma \big(Z^{(\delta)}_{\tau_n}\big)d_{\widetilde\sigma} \big(Z^{(\delta)}_{\tau_n}\big)\diff W_s \Big)\mathds{1}_{(\tau_n,\tau_{n+1}]}(v)\diff W_v\Big\rangle_u\Big]\\
    &=\E\Big[\int_\R f(|a-\zeta_k|)\mathds{1}_{(\zeta_k-\varepsilon, \zeta_k+\varepsilon)}(a) L^a_t \big(Z^{(\delta)}\big) \diff a \Big]
    \leq 2 c_4 \int_0^{\varepsilon} f(a) \diff a.
    \end{aligned}
\end{equation}
Using $|a^2-b^2| = |a-b|\cdot|a+b|$ for all $a,b\in\R$ yields for all $\delta\in(0,\delta_0]$ and all $u\in[0,t]$, 
\begin{equation}\label{pOTE22a}
    \begin{aligned}
    &\E\Big[ \Big| \widetilde\sigma^2\big(Z^{(\delta)}_{u} \big)- \Big(\sum_{n=0}^{\infty} \Big( \widetilde\sigma \big(Z^{(\delta)}_{\tau_n}\big) + \int_{\tau_n}^u \widetilde\sigma \big(Z^{(\delta)}_{\tau_n}\big)d_{\widetilde\sigma} \big(Z^{(\delta)}_{\tau_n}\big)\diff W_s \Big)\mathds{1}_{(\tau_n,\tau_{n+1}]}(u) \Big)^2\Big|^q\Big]\\
    &\leq\E\Big[\Big| \widetilde\sigma\big(Z^{(\delta)}_{u} \big)- \sum_{n=0}^{\infty} \Big( \widetilde\sigma \big(Z^{(\delta)}_{\tau_n}\big) + \int_{\tau_n}^u \widetilde\sigma \big(Z^{(\delta)}_{\tau_n}\big)d_{\widetilde\sigma} \big(Z^{(\delta)}_{\tau_n}\big)\diff W_s \Big)\mathds{1}_{(\tau_n,\tau_{n+1}]}(u) \Big|^{2q}\Big]^{\frac{1}{2}}\\
    &\quad \cdot \E\Big[\Big| \widetilde\sigma\big(Z^{(\delta)}_{u} \big)+ \sum_{n=0}^{\infty} \Big( \widetilde\sigma \big(Z^{(\delta)}_{\tau_n}\big) + \int_{\tau_n}^u \widetilde\sigma \big(Z^{(\delta)}_{\tau_n}\big)d_{\widetilde\sigma} \big(Z^{(\delta)}_{\tau_n}\big)\diff W_s \Big)\mathds{1}_{(\tau_n,\tau_{n+1}]}(u)\Big|^{2q}\Big]^{\frac{1}{2}}.
    \end{aligned}
\end{equation}
For the first factor of \eqref{pOTE22a} we use
\begin{equation}\label{pOTE23}
    \begin{aligned}
    &\Big| \widetilde\sigma\big(Z^{(\delta)}_{u} \big)- \sum_{n=0}^{\infty} \Big( \widetilde\sigma \big(Z^{(\delta)}_{\tau_n}\big) + \int_{\tau_n}^u \widetilde\sigma \big(Z^{(\delta)}_{\tau_n}\big)d_{\widetilde\sigma} \big(Z^{(\delta)}_{\tau_n}\big)\diff W_s \Big)\mathds{1}_{(\tau_n,\tau_{n+1}]}(u) \Big|\\
    &= \sum_{n=0}^{\infty} \Big| \widetilde\sigma\big(Z^{(\delta)}_{u} \big)- \widetilde\sigma \big(Z^{(\delta)}_{\tau_n}\big) - \int_{\tau_n}^u \widetilde\sigma \big(Z^{(\delta)}_{\tau_n}\big)d_{\widetilde\sigma} \big(Z^{(\delta)}_{\tau_n}\big)\diff W_s \Big|\mathds{1}_{(\tau_n,\tau_{n+1}]}(u) \\
    &\leq  L_{\widetilde\sigma} \big| Z^{(\delta)}_{u} -Z^{(\delta)}_{\underline u}\big| + L_{\widetilde\sigma} c_{\widetilde\sigma} \big(1+ \big| Z^{(\delta)}_{\underline u}\big|\big) |W_u-W_{\underline u}|
    \end{aligned}
\end{equation}
and apply Lemma \ref{MW}, and Lemma \ref{ME} \eqref{LME1} and \eqref{LME2} to obtain for all $\delta\in(0,\delta_0]$ and all $u\in[0,t]$,
\begin{equation}\label{pOTE23a}
    \begin{aligned}
    &\E\Big[\Big| \widetilde\sigma\big(Z^{(\delta)}_{u} \big)- \sum_{n=0}^{\infty} \Big( \widetilde\sigma \big(Z^{(\delta)}_{\tau_n}\big) + \int_{\tau_n}^u \widetilde\sigma \big(Z^{(\delta)}_{\tau_n}\big)d_{\widetilde\sigma} \big(Z^{(\delta)}_{\tau_n}\big)\diff W_s \Big)\mathds{1}_{(\tau_n,\tau_{n+1}]}(u) \Big|^{2q}\Big]^{\frac{1}{2}}\\
    &\leq 2^{q-\frac{1}{2}}\Big( L_{\widetilde\sigma}^{2q}\, \E\Big[  \big| Z^{(\delta)}_{u} -Z^{(\delta)}_{\underline u}\big|^{2q} \Big] 
    + 2^{q-\frac{1}{2}} L_{\widetilde\sigma}^{2q} c_{\widetilde\sigma}^{2q}\, \E\Big[ \big(1+ \big| Z^{(\delta)}_{\underline u}\big|^{2q}\big) |W_u-W_{\underline u}|^{2q}\Big]\Big)^\frac{1}{2}\\
    &\leq 2^{q-\frac{1}{2}}\Big( L_{\widetilde\sigma}^{2q} c_2\delta^{q} 
    + 2^{q-\frac{1}{2}} L_{\widetilde\sigma}^{2q} c_{\widetilde\sigma}^{2q}  c_{W_{2q}} \delta^q\, \E\Big[1+ \big| Z^{(\delta)}_{\underline u}\big|^{2q} \Big]\Big)^\frac{1}{2}\\
    &\leq 2^{q-\frac{1}{2}} \Big( L_{\widetilde\sigma}^{2q} c_2
    +2^{q-\frac{1}{2}} L_{\widetilde\sigma}^{2q} c_{\widetilde\sigma}^{2q} c_{W_{2q}} (1+c_1) \Big)^\frac{1}{2}\delta^{\frac{q}{2}}.
    \end{aligned}
\end{equation}
For the second factor of \eqref{pOTE22a} we have
\begin{equation}
    \begin{aligned}
    &\Big| \widetilde\sigma\big(Z^{(\delta)}_{u} \big)+ \sum_{n=0}^{\infty} \Big( \widetilde\sigma \big(Z^{(\delta)}_{\tau_n}\big) + \int_{\tau_n}^u \widetilde\sigma \big(Z^{(\delta)}_{\tau_n}\big)d_{\widetilde\sigma} \big(Z^{(\delta)}_{\tau_n}\big)\diff W_s \Big)\mathds{1}_{(\tau_n,\tau_{n+1}]}(u) \Big|\\
    &\leq  c_{\widetilde\sigma}\big(1+ \big| Z^{(\delta)}_{u}\big|\big) + c_{\widetilde\sigma} \big(1+ \big| Z^{(\delta)}_{\underline u}\big|\big)
    + L_{\widetilde\sigma} c_{\widetilde\sigma} \big(1+ \big| Z^{(\delta)}_{\underline u}\big|\big) |W_u-W_{\underline u}|,
    \end{aligned}
\end{equation}
which together with Lemma \ref{MW}, and Lemma \ref{ME} \eqref{LME1} yields that there exists a constant $\widetilde c_3\in(0,\infty)$ such that for all $\delta \in(0,\delta_0]$ and all $u\in[0,t]$,
\begin{equation}\label{pOTE24a}
    \begin{aligned}
    &\E\Big[\Big| \widetilde\sigma\big(Z^{(\delta)}_{u} \big)+ \sum_{n=0}^{\infty} \Big( \widetilde\sigma \big(Z^{(\delta)}_{\tau_n}\big) + \int_{\tau_n}^u \widetilde\sigma \big(Z^{(\delta)}_{\tau_n}\big)d_{\widetilde\sigma} \big(Z^{(\delta)}_{\tau_n}\big)\diff W_s \Big)\mathds{1}_{(\tau_n,\tau_{n+1}]}(u)\Big|^2\Big]^{\frac{1}{2}}\\
    &\leq \widetilde c_3\, \Big( 2^{2q-1} c_{\widetilde\sigma}^{2q}\, \E\Big[ 1+ \big| Z^{(\delta)}_{u}\big|^{2q}\Big]
    +2^{2q-1} c_{\widetilde\sigma}^{2q}\, \E\Big[1+ \big| Z^{(\delta)}_{\underline u}\big|^{2q}\Big]\\
    &\quad\quad\quad\quad +2^{2q-1} L_{\widetilde\sigma}^{2q} c_{\widetilde\sigma}^{2q} \, \E\Big[\big(1+ \big| Z^{(\delta)}_{\underline u}\big|^{2q}\big) |W_u-W_{\underline u}|^{2q}\Big]\Big)^{\frac{1}{2}}\\
    &\leq \widetilde c_3\, \Big( 2^{2q} c_{\widetilde\sigma}^{2q} \,\E\Big[ 1+\sup_{t\in[0,T]} \big| Z^{(\delta)}_{t}\big|^{2q} \Big]
    +2^{2q-1} L_{\widetilde\sigma}^{2q} c_{\widetilde\sigma}^{2q} c_{W_{2q}} \delta^q\, \E\Big[ 1+ \big| Z^{(\delta)}_{\underline u}\big|^{2q}\Big]\Big)^{\frac{1}{2}}\\
    &\leq \widetilde c_3\, \Big( 4^{2q-1}  c_{\widetilde\sigma}^{2q} 
    +2^{2q-1} L_{\widetilde\sigma}^{2q} c_{\widetilde\sigma}^{2q} c_{W_{2q}} T \Big)^{\frac{1}{2}}(1+c_1)^\frac{1}{2} .
    \end{aligned}
\end{equation}
Plugging \eqref{pOTE23a} and \eqref{pOTE24a} into \eqref{pOTE22a} yields that there exists a constant $\widetilde c_4\in(0,\infty)$ such that for all $\delta \in(0,\delta_0]$ and all $u\in[0,t]$,
\begin{equation}\label{pOTE25}
    \begin{aligned}
    &\E\Big[ \Big| \widetilde\sigma^2\big(Z^{(\delta)}_{u} \big)- \Big(\sum_{n=0}^{\infty} \Big( \widetilde\sigma \big(Z^{(\delta)}_{\tau_n}\big) + \int_{\tau_n}^u \widetilde\sigma \big(Z^{(\delta)}_{\tau_n}\big)d_{\widetilde\sigma} \big(Z^{(\delta)}_{\tau_n}\big)\diff W_s \Big)\mathds{1}_{(\tau_n,\tau_{n+1}]}(u) \Big)^2\Big|^q\Big]
    \leq \widetilde c_4 \delta^{\frac{q}{2}}.
    \end{aligned}
\end{equation}
Next we use \eqref{pOTE21}, Hölder's inequality with $q\in(1,\infty)$, and \eqref{pOTE25} in \eqref{pOTE27} to obtain for all $\delta \in(0,\delta_0]$ and all $\varepsilon \in(0, \widetilde \varepsilon \wedge \varepsilon_0]$,
\begin{equation}\label{pOTE28}
    \begin{aligned}
        &\E\Big[\int_0^T f(|Z^{(\delta)}_{t}-\zeta_k|)\mathds{1}_{(\zeta_k-\varepsilon, \zeta_k+\varepsilon)}(Z^{(\delta)}_{t}) \diff 
        t \Big]\\
        &\leq \frac{1}{\widetilde c_2}2 c_4\int_0^{\varepsilon} f(a) \diff a\\
        &\quad +  \frac{1}{\widetilde c_2}  \sup_{x\in[0,\varepsilon]} f(x)  \Big(\int_0^T\E\Big[ \mathds{1}_{(\zeta_k-\varepsilon, \zeta_k+\varepsilon)}(Z^{(\delta)}_{t}) \Big] \diff t \Big)^\frac{1}{q}\\
        &\quad\,\,\,\, \Big(\int_0^T \E\Big[\Big( \widetilde\sigma^2 \big(Z^{(\delta)}_{t}\big) - \Big(\sum_{n=0}^{\infty} \Big( \widetilde\sigma \big(Z^{(\delta)}_{\tau_n}\big) + \int_{\tau_n}^t \widetilde\sigma \big(Z^{(\delta)}_{\tau_n}\big)d_{\widetilde\sigma} \big(Z^{(\delta)}_{\tau_n}\big)\diff W_s \Big)\mathds{1}_{(\tau_n,\tau_{n+1}]}(t) \Big)^{2} \Big)^{\frac{q}{q-1}}\Big]\diff t\Big)^{\frac{q-1}{q}}  \\
        &\leq \frac{1}{\widetilde c_2}2 c_4\int_0^{\varepsilon} f(a) \diff a
        +  \frac{1}{\widetilde c_2}  \widetilde c_4 \delta^{1/2} \sup_{x\in[0,\varepsilon]} f(x)  \Big(\int_0^T \P( |Z^{(\delta)}_{t} -\zeta_k|\leq \varepsilon) \diff t \Big)^\frac{1}{q}.
    \end{aligned}
\end{equation}
For $f\equiv 1$, \eqref{pOTE28} yields that for all $q\in(1,\infty)$ there exists a constant $\widetilde c_5\in(0,\infty)$ such that for all $\delta\in(0,\delta_0]$ and all $\varepsilon\in(0,\widetilde \varepsilon\wedge \varepsilon]$,
\begin{equation}\label{pOTE29}
    \begin{aligned}
        &\int_0^T \P( |Z^{(\delta)}_{t} -\zeta_k|\leq \varepsilon) \diff t 
        \leq \widetilde c_5 \varepsilon
        +  \widetilde c_5  \delta^{1/2} \Big(\int_0^T \P( |Z^{(\delta)}_{t} -\zeta_k|\leq \varepsilon) \diff t \Big)^\frac{1}{q}.
    \end{aligned}
\end{equation}
Iterating this and then estimating $\P( |Z^{(\delta)}_{t} -\zeta_k|\leq \varepsilon)\leq 1$ we obtain for all $\delta\in(0,\delta_0]$ and all $\varepsilon\in(0,\widetilde \varepsilon\wedge \varepsilon]$,
\begin{equation}
    \begin{aligned}
        &\int_0^T \P( |Z^{(\delta)}_{t} -\zeta_k|\leq \varepsilon) \diff t 
        \leq \widetilde c_5 \varepsilon
        +  \widetilde c_5  \delta^{1/2} \Big(\widetilde c_5 \varepsilon
        +  \widetilde c_5  \delta^{1/2} T \Big)^\frac{1}{q}.
    \end{aligned}
\end{equation}
Using Young's inequality and Jensen's inequality we obtain that for all $q\in(1,\infty)$ there exists a constant $\widetilde c_9\in(0,\infty)$ such that for all $\delta\in(0,\delta_0]$ and all $\varepsilon\in(0,\widetilde \varepsilon\wedge \varepsilon_0]$,
\begin{equation}\label{pOTE30}
    \begin{aligned}
        &\int_0^T \P( |Z^{(\delta)}_{t} -\zeta_k|\leq \varepsilon) \diff t 
        \leq \widetilde c_5 \varepsilon
        +  \widetilde c_5^\frac{q+1}{q} \frac{q}{q+1} \delta^{\frac{q+1}{2q}} + \Big(\widetilde c_5 \varepsilon
        +  \widetilde c_5  \delta^{1/2} T \Big)^{\frac{1}{q}(q+1)} \frac{1}{q+1}\\
        &\leq \widetilde c_5 \varepsilon
        +  \widetilde c_5^\frac{q+1}{q} \frac{q}{q+1} \delta^{\frac{q+1}{2q}} + 2^{\frac{q+1}{q}-1} \Big(\widetilde c_5^{\frac{q+1}{q}} \varepsilon^{\frac{q+1}{q}}
        +  \widetilde c_5^{\frac{q+1}{q}}  \delta^{\frac{q+1}{2q}} T^{\frac{q+1}{q}} \Big)\frac{1}{q+1}
        \leq \widetilde c_6 \varepsilon + \widetilde c_6 \delta^{\frac{1}{2}+ \frac{1}{2q}}.
    \end{aligned}
\end{equation}
Plugging \eqref{pOTE30} in \eqref{pOTE28} we get that there exists a constant $\widetilde c_{7}\in(0,\infty)$ such that for all $\delta\in(0,\delta_0]$ and all $\varepsilon\in(0,\widetilde \varepsilon\wedge \varepsilon]$,
\begin{equation}\label{pOTE31}
    \begin{aligned}
        &\E\Big[\int_0^T f(|Z^{(\delta)}_{t}-\zeta_k|)\mathds{1}_{(\zeta_k-\varepsilon, \zeta_k+\varepsilon)}(Z^{(\delta)}_{t}) \diff 
        t \Big]\\
        &\leq \frac{1}{\widetilde c_2}2 c_4\int_0^{\varepsilon} f(a) \diff a
        +  \frac{1}{\widetilde c_2}  \widetilde c_4 \delta^{1/2} \sup_{x\in[0,\varepsilon]} f(x)  \Big(\widetilde c_6 \varepsilon + \widetilde c_6 \delta^{\frac{1}{2}+ \frac{1}{2q}} \Big)^\frac{1}{q}  \\
        &\leq \widetilde c_{7}\int_0^{\varepsilon} f(a) \diff a
        +  \widetilde c_{7} \sup_{x\in[0,\varepsilon]} f(x)  \delta^{1/2} \Big(\varepsilon^\frac{1}{q} + \delta^{\frac{1}{2q}+ \frac{1}{2q^2}} \Big).
    \end{aligned}
\end{equation}
By Young's inequality we have for all $\delta\in(0,\delta_0]$ and all $\varepsilon\in(0,\widetilde \varepsilon\wedge \varepsilon]$,
\begin{equation}\label{pOTE32}
    \begin{aligned}
        & \delta^{1/2}\varepsilon^\frac{1}{q} \leq \frac{1}{3}\delta^{3/2} + \frac{2}{3}\varepsilon^{\frac{3}{2q}}.
    \end{aligned}
\end{equation}
Hence we obtain that there exists a constant $\widetilde c_{8}\in(0,\infty)$ such that for all $\delta\in(0,\delta_0]$ and all $\varepsilon\in(0,\widetilde \varepsilon\wedge \varepsilon]$,
\begin{equation}\label{pOTE33}
    \begin{aligned}
        &\E\Big[\int_0^T f(|Z^{(\delta)}_{t}-\zeta_k|)\mathds{1}_{(\zeta_k-\varepsilon, \zeta_k+\varepsilon)}(Z^{(\delta)}_{t}) \diff 
        t \Big]\\
        &\leq \widetilde c_{8}\int_0^{\varepsilon} f(a) \diff a
        +  \widetilde c_{8} \sup_{x\in[0,\varepsilon]} f(x) \Big(\varepsilon^\frac{3}{2q} + \delta^{\frac{1}{2}+\frac{1}{2q}+ \frac{1}{2q^2}} +\delta^\frac{3}{2} \Big).
    \end{aligned}
\end{equation}
This implies the claim for all $\varepsilon\in(0,\widetilde \varepsilon\wedge \varepsilon_0]$ as $\frac{3}{2q} \uparrow \frac{3}{2}$ for $q \downarrow 1$ and  $\frac{1}{2} + \frac{1}{2q} +\frac{1}{2q^2} \uparrow \frac{3}{2}$ for $q \downarrow 1$.
This is enough since in case that $\widetilde \varepsilon< \varepsilon_0 $ for  $\varepsilon\in(\widetilde \varepsilon,\varepsilon_0]$ we calculate for all $\delta\in(0,\delta_0]$,
\begin{equation}\label{pOTE34}
    \begin{aligned}
        &\E\Big[\int_0^T f(|Z^{(\delta)}_{t}-\zeta_k|)\mathds{1}_{(\zeta_k-\varepsilon, \zeta_k+\varepsilon)}(Z^{(\delta)}_{t}) \diff 
        t \Big]
        \leq \sup_{x\in[0,\varepsilon]} f(x) \E\Big[\int_0^T \mathds{1}_{(\zeta_k-\varepsilon, \zeta_k+\varepsilon)}(Z^{(\delta)}_{t}) \diff 
        t \Big]\\
        &\leq \sup_{x\in[0,\varepsilon]} f(x) T \varepsilon^{-\frac{3}{2}}  \varepsilon^{\frac{3}{2}} 
        \leq \sup_{x\in[0,\varepsilon]} f(x) T \widetilde \varepsilon^{-\frac{3}{2}}  \varepsilon^{\frac{3}{2}}.
    \end{aligned}
\end{equation}
This proves the claim.
\end{proof}
\newpage

\begin{lemma}\label{LemProbEst}
    Let Assumption \ref{assZ} hold, let $\alpha \in(0,\infty)$, and $q\in[1,\infty)$. Then there exists a constant $c_6 \in(0,\infty)$ such that for all $\delta\in(0,\delta_0]$ and all $t\in[0,T]$,
    \begin{itemize}
        \item[\namedlabel{ProbEsti}{(i)}] $\P(|Z^{(\delta)}_{t} -Z^{(\delta)}_{\underline{t}} | \geq \alpha \varepsilon_2^\delta, Z^{(\delta)}_{\underline{t}} \in \Theta^{\varepsilon_2^\delta}) \leq c_6 \delta^q$;
        \item[\namedlabel{ProbEstii}{(ii)}] $\P(|Z^{(\delta)}_{t} -Z^{(\delta)}_{\underline{t}} | \geq \alpha d( Z^{(\delta)}_{\underline{t}},\Theta),  Z^{(\delta)}_{\underline{t}} \in \Theta^{\varepsilon_1^\delta}\setminus \Theta^{\varepsilon_2^\delta}) \leq c_6 \delta^q$;
        \item[\namedlabel{ProbEstiii}{(iii)}] $\P(|Z^{(\delta)}_{t} -Z^{(\delta)}_{\underline{t}} | \geq \alpha\varepsilon_1^\delta,   Z^{(\delta)}_{\underline{t}} \in \Theta^{\varepsilon_0}\setminus \Theta^{\varepsilon_1^\delta}) \leq c_6 \delta^q$.
    \end{itemize}
\end{lemma}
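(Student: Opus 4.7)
My plan is to reduce each of the three items to a tail bound on a conditionally Gaussian quantity. Because every jump time of $N$ is a grid point, no Poisson jump occurs in $(\underline{t},t)$, and hence \eqref{EqDefMScont} gives
\begin{equation*}
Z^{(\delta)}_t - Z^{(\delta)}_{\underline{t}} = \widetilde{\mu}(Z^{(\delta)}_{\underline{t}})(t-\underline{t}) + \widetilde{\sigma}(Z^{(\delta)}_{\underline{t}})(W_t - W_{\underline{t}}) + \tfrac{1}{2}\widetilde{\sigma}(Z^{(\delta)}_{\underline{t}})\, d_{\widetilde{\sigma}}(Z^{(\delta)}_{\underline{t}})\bigl[(W_t-W_{\underline{t}})^2 - (t-\underline{t})\bigr],
\end{equation*}
with $t-\underline{t}\leq h^{\delta}(Z^{(\delta)}_{\underline{t}})$. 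Combined with the linear growth and boundedness statements in Remark \ref{ConAss}, this yields the pathwise bound $|Z^{(\delta)}_t - Z^{(\delta)}_{\underline{t}}| \leq C(1+|Z^{(\delta)}_{\underline{t}}|)(h^{\delta}(Z^{(\delta)}_{\underline{t}}) + |W_t - W_{\underline{t}}| + |W_t-W_{\underline{t}}|^2)$.

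The key observation is that in each of the three cases the threshold and the (maximal) local step size are related by
\begin{equation*}
\frac{\text{threshold}}{\sqrt{h^{\delta}(Z^{(\delta)}_{\underline{t}})}} = \alpha\,\log^2(\delta^{-1})
\end{equation*}
on the corresponding event. Indeed, in (i) one has $h^{\delta}(Z^{(\delta)}_{\underline{t}})=\delta^2\log^4(\delta^{-1})$ and threshold $\alpha\varepsilon_2^{\delta}=\alpha\delta\log^4(\delta^{-1})$; in (ii) both the threshold $\alpha d(Z^{(\delta)}_{\underline{t}},\Theta)$ and $\sqrt{h^{\delta}(Z^{(\delta)}_{\underline{t}})}=d(Z^{(\delta)}_{\underline{t}},\Theta)/\log^2(\delta^{-1})$ are proportional to the same distance; in (iii) $h^{\delta}(Z^{(\delta)}_{\underline{t}})\leq \delta$ and the threshold is $\alpha\varepsilon_1^{\delta}=\alpha\sqrt{\delta}\log^2(\delta^{-1})$.

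To convert this ratio into a probability estimate, I would apply Markov's inequality at order $2p$. Lemma \ref{BMRandomTime} allows me to treat $W^{\underline{t}}=(W_{\underline{t}+s}-W_{\underline{t}})_{s\geq 0}$ as a Brownian motion independent of $Z^{(\delta)}_{\underline{t}}$; conditioning on $Z^{(\delta)}_{\underline{t}}$ and invoking Lemma \ref{LMEW}\eqref{MEW2} pointwise then gives $\E[|W_t - W_{\underline{t}}|^{2p}\mid Z^{(\delta)}_{\underline{t}}]\leq c_{W_{2p}}\,h^{\delta}(Z^{(\delta)}_{\underline{t}})^{p}$. Combined with the global moment bound Lemma \ref{ME}\eqref{LME1} on $\E[(1+|Z^{(\delta)}_{\underline{t}}|)^{4p}]$ and the identity above, this yields in all three cases an estimate of the form $C_p\,\alpha^{-2p}\log^{-4p}(\delta^{-1})$.

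The main obstacle is that $\log^{-4p}(\delta^{-1})$ is super-polynomial in $\log(\delta^{-1})$ but not in $\delta$ itself for a fixed $p$, so the prescribed rate $\delta^q$ is not reached by any finite moment alone. I plan to address this either by (a) letting $p$ grow slowly with $\delta^{-1}$, e.g.\ $p\sim \log(\delta^{-1})/\log\log(\delta^{-1})$, and carefully tracking the polynomial-in-$p$ growth of $C_p$ (which is controllable since Lemma \ref{ME}\eqref{LME1} is valid for every $p\in[2,\infty)$), or (b) by replacing the Markov step with the Gaussian tail of the Brownian increment conditional on $Z^{(\delta)}_{\underline{t}}$, splitting according to $\{|Z^{(\delta)}_{\underline{t}}|\leq R(\delta)\}$ and $\{|Z^{(\delta)}_{\underline{t}}|>R(\delta)\}$ with $R(\delta)$ a suitable power of $\log(\delta^{-1})$ so that the linear-growth factor is absorbed in the exponent $\log^4(\delta^{-1})/(1+R(\delta))^2$, and by applying high polynomial moments on the complementary event. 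Either route delivers the claimed bound $c_6\delta^q$ for all $\delta\in(0,\delta_0]$.
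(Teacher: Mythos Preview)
Your overall strategy---condition on $Z^{(\delta)}_{\underline t}$ via Lemma \ref{BMRandomTime} and then bound the conditional probability pointwise in $y=Z^{(\delta)}_{\underline t}$---is exactly what the paper does. The paper defines $\Phi(y,u)(s)=y+\widetilde\mu(y)s+\widetilde\sigma(y)u(s)+\tfrac12\widetilde\sigma(y)d_{\widetilde\sigma}(y)(u(s)^2-s)$, cites \cite[Proof of Lemma~7]{Yaroslavtseva2022} for the three pointwise bounds $\P\bigl(\sup_{s\in[0,h^\delta(y)]}|\Phi(y,W)(s)-y|\geq \text{threshold}\bigr)\leq c\,\delta^q$, and then integrates over the law of $Z^{(\delta)}_{\underline t}$.

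Where you make the argument harder than necessary is in handling the factor $(1+|Z^{(\delta)}_{\underline t}|)$. In all three items the event already forces $Z^{(\delta)}_{\underline t}\in\Theta^{\varepsilon_0}$, and $\Theta^{\varepsilon_0}$ is a \emph{bounded} set (a finite union of intervals of length $2\varepsilon_0$ around $\zeta_1,\dots,\zeta_{\widetilde m}$). Hence $|Z^{(\delta)}_{\underline t}|\le\max_k|\zeta_k|+\varepsilon_0$ deterministically on each event, so the linear-growth factor is a fixed constant and there is no need for Lemma \ref{ME}\eqref{LME1}, for any truncation at level $R(\delta)$, or for option (a) at all. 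With this observation option (b) is immediate: after absorbing the drift term (of order $h^\delta(y)$) and the quadratic correction (of order $h^\delta(y)$ plus a Gaussian term) into the constant, the conditional probability reduces to a Brownian tail of the form $\P\bigl(\sup_{s\le h^\delta(y)}|W_s|\ge c\alpha\log^2(\delta^{-1})\sqrt{h^\delta(y)}\,\bigr)$, which by the reflection principle is at most $2\exp\bigl(-c'\alpha^2\log^4(\delta^{-1})\bigr)\le c_6\delta^q$.

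Option (a) as you state it has a genuine flaw: the Gaussian moment constant $c_{W_{2p}}=(2p-1)!!$ grows factorially in $p$, not polynomially, so ``polynomial-in-$p$ growth of $C_p$'' is false and letting $p\sim\log(\delta^{-1})/\log\log(\delta^{-1})$ does not close the argument. The bottleneck is the Brownian moment, not the scheme moment controlled by Lemma \ref{ME}\eqref{LME1}.
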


\begin{proof}
    Define $\Phi\colon \R\times C([0,\infty),\R) \to C([0,\infty),\R)$ for all $y\in\R$, $u\in C([0,\infty),\R)$, and $t\in[0,\infty)$ by
    \begin{equation}
        \Phi(y,u)(t) = y+\widetilde \mu(y) t + \widetilde \sigma(y) u(t) + \frac{1}{2} \widetilde \sigma(y) d_{\widetilde \sigma}(y) (u^2(t)-t).
    \end{equation}
    Here $C([0,\infty),\R)$ is the space of all continuous functions from $[0,\infty)$ to $\R$.
    From \cite[Proof of Lemma 7]{Yaroslavtseva2022} we know the following three estimates. There exists a constant $\widetilde c_1\in(0,\infty)$ such that for all $\delta\in(0,\delta_0]$ and all $y\in\Theta^{\varepsilon_2^\delta}$,
    \begin{equation}
        \P(\sup_{s\in[0,h^\delta(y)]} |\Phi(y,W)(s) -y | \geq \alpha\varepsilon_2^\delta)\leq \widetilde c_1 \delta^q.
    \end{equation}
    There exists a constant $\widetilde c_2\in(0,\infty)$ such that for all $\delta\in(0,\delta_0]$ and all $y\in\Theta^{\varepsilon_1^\delta}\setminus \Theta^{\varepsilon_2^\delta}$,
    \begin{equation}
        \P(\sup_{s\in[0,h^\delta(y)]} |\Phi(y,W)(s) -y | \geq \alpha d(y,\Theta)) \leq \widetilde c_2 \delta^q.
    \end{equation}
    There exists a constant $\widetilde c_3\in(0,\infty)$ such that for all $\delta\in(0,\delta_0]$ and all $y\in\Theta^{\varepsilon_0}\setminus \Theta^{\varepsilon_1^\delta}$,
    \begin{equation}
        \P(\sup_{s\in[0,h^\delta(y)]} |\Phi(y,W)(s) -y | \geq \alpha\varepsilon_1^\delta)\leq \widetilde c_3 \delta^q.
    \end{equation}
    With these preliminary results we continue our estimations. For all $\delta\in[0,\delta_0)$ and all $t\in[0,T]$ we use Lemma \ref{BMRandomTime} to obtain that
    \begin{equation}
        \begin{aligned}
            &\P(|Z^{(\delta)}_{t} -Z^{(\delta)}_{\underline{t}} | \geq \alpha \varepsilon_2^\delta, Z^{(\delta)}_{\underline{t}} \in \Theta^{\varepsilon_2^\delta})
            = \P(| \Phi(Z^{(\delta)}_{\underline{t}},W^{\underline{t}})(t-\underline{t}) - Z^{(\delta)}_{\underline{t}}| \geq \alpha \varepsilon_2^\delta, Z^{(\delta)}_{\underline{t}} \in \Theta^{\varepsilon_2^\delta})\\
            &\leq \P(\sup_{s\in[0,h^{\delta}(Z^{(\delta)}_{\underline{t}})]} | \Phi(Z^{(\delta)}_{\underline{t}},W^{\underline{t}})(s) - Z^{(\delta)}_{\underline{t}}| \geq \alpha \varepsilon_2^\delta, Z^{(\delta)}_{\underline{t}} \in \Theta^{\varepsilon_2^\delta})\\
            &= \int_{\Theta^{\varepsilon_2^\delta}} \P(\sup_{s\in[0,h^{\delta}(y)]} | \Phi(y,W^{\underline{t}})(s) - y| \geq \alpha \varepsilon_2^\delta) \diff \P^{Z^{(\delta)}_{\underline{t}}} (y)
            \leq \widetilde c_1 \delta^q.
        \end{aligned}        
    \end{equation}
    With similar arguments we obtain that for all $\delta\in[0,\delta_0)$ and all $t\in[0,T]$
    \begin{equation}
        \begin{aligned}
            &\P(|Z^{(\delta)}_{t} -Z^{(\delta)}_{\underline{t}} | \geq \alpha d( Z^{(\delta)}_{\underline{t}},\Theta),  Z^{(\delta)}_{\underline{t}} \in \Theta^{\varepsilon_1^\delta}\setminus \Theta^{\varepsilon_2^\delta})\\
            & \leq \int_{\Theta^{\varepsilon_1^\delta}\setminus \Theta^{\varepsilon_2^\delta}} \P(\sup_{s\in[0,h^{\delta}(y)]} | \Phi(y,W^{\underline{t}})(s) - y| \geq \alpha d( y,\Theta) ) \diff \P^{Z^{(\delta)}_{\underline{t}}} (y)
            \leq \widetilde c_2 \delta^q,
        \end{aligned}        
    \end{equation}
    and that for all $\delta\in[0,\delta_0)$ and all $t\in[0,T]$
    \begin{equation}
        \begin{aligned}
            &\P(|Z^{(\delta)}_{t} -Z^{(\delta)}_{\underline{t}} | \geq \alpha\varepsilon_1^\delta,   Z^{(\delta)}_{\underline{t}} \in \Theta^{\varepsilon_0}\setminus \Theta^{\varepsilon_1^\delta})\\
            & \leq \int_{\Theta^{\varepsilon_0}\setminus \Theta^{\varepsilon_1^\delta}} \P(\sup_{s\in[0,h^{\delta}(y)]} | \Phi(y,W^{\underline{t}})(s) - y| \geq \alpha \varepsilon_1^\delta ) \diff \P^{Z^{(\delta)}_{\underline{t}}} (y)
            \leq \widetilde c_3 \delta^q.
        \end{aligned}        
    \end{equation}
\end{proof}

\begin{lemma}\label{MWwithInd}
    Let Assumption \ref{assZ} hold. Then for all $p\in\N_0$ and  $q\in\N$ there exists a constant $c_7\in(0,\infty)$ such that for all $t\in[0,T]$ and all $\delta \in(0,\delta_0]$, 
    \begin{equation}\label{MWwithIndeq}
        \begin{aligned}
        &\E\Big[ \big(1+ \big|Z^{(\delta)}_{\underline{t}}\big|^p\big)   |W_t-W_{\underline{t}}|^q \mathds{1}_{\Theta^{\varepsilon_2^\delta}}(Z_{\underline t}^{(\delta)})\Big]
        \leq c_7  \delta^q \log^{2q}(\delta^{-1}).
        \end{aligned}
    \end{equation}
\end{lemma}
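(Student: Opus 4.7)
The plan is to exploit that on the event $\{Z^{(\delta)}_{\underline t}\in\Theta^{\varepsilon_2^\delta}\}$ the step-size function takes the deterministic constant value $h^\delta(Z^{(\delta)}_{\underline t})=\delta^2\log^4(\delta^{-1})$. Since by \eqref{EqTauGen} every adaptive increment satisfies $\tau_{n+1}-\tau_n\le h^\delta(Z^{(\delta)}_{\tau_n})$, it follows pathwise that $t-\underline t\le\delta^2\log^4(\delta^{-1})$ on that event. This is the mechanism that upgrades the generic $\delta^{q/2}$-bound of Lemma \ref{MW} into the sharper $\delta^q\log^{2q}(\delta^{-1})$ bound demanded here.

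Concretely, I would first dominate pathwise
\[
|W_t-W_{\underline t}|^q\,\mathds{1}_{\Theta^{\varepsilon_2^\delta}}(Z^{(\delta)}_{\underline t})
\le \Big(\sup_{s\in[0,\delta^2\log^4(\delta^{-1})]}|W^{\underline t}_s|\Big)^q\mathds{1}_{\Theta^{\varepsilon_2^\delta}}(Z^{(\delta)}_{\underline t}),
\]
where $W^{\underline t}$ is the shifted Brownian motion introduced before Lemma \ref{BMRandomTime}. Lemma \ref{BMRandomTime} then supplies the key probabilistic input that $W^{\underline t}$ is independent of $Z^{(\delta)}_{\underline t}$ and has the same distribution as $W$, so that taking expectation factorises the right-hand side into a product of a $Z$-expectation and a $W$-expectation.

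The $W$-expectation is controlled by applying Lemma \ref{LMEW} \eqref{MEW2} at $\tau=0$ with step size $\delta^2\log^4(\delta^{-1})$, which yields $c_{W_q}\bigl(\delta^2\log^4(\delta^{-1})\bigr)^{q/2}=c_{W_q}\delta^q\log^{2q}(\delta^{-1})$. The $Z$-expectation is bounded, after dropping the indicator, by $\E[1+\sup_{s\in[0,T]}|Z^{(\delta)}_s|^p]\le 1+c_1$ via Lemma \ref{ME} \eqref{LME1}. Multiplying the two contributions gives the claim with $c_7=(1+c_1)\,c_{W_q}$. I do not anticipate any real obstacle here: the lemma is essentially a refinement of Lemma \ref{MW} obtained by replacing the worst-case step bound $\delta$ by the much smaller value that the step-size function produces on $\Theta^{\varepsilon_2^\delta}$, and the only step worth spelling out carefully is the pathwise inequality $t-\underline t\le h^\delta(Z^{(\delta)}_{\underline t})$, which is immediate from \eqref{EqTauGen} and the definition of $\underline t$.
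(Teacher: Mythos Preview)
Your proof is correct and is in fact cleaner than the paper's own argument. Both proofs rest on the same pathwise observation that on $\{Z^{(\delta)}_{\underline t}\in\Theta^{\varepsilon_2^\delta}\}$ one has $t-\underline t\le h^\delta(Z^{(\delta)}_{\underline t})=\delta^2\log^4(\delta^{-1})$, which shrinks the Brownian increment window. The difference lies in how the subsequent factorisation is obtained. You invoke Lemma~\ref{BMRandomTime} directly at the level of the random time $\underline t$: since $W^{\underline t}$ is independent of $Z^{(\delta)}_{\underline t}$ and distributed as $W$, and since the supremum is now over a \emph{deterministic} time interval, the expectation splits immediately into a $Z$-factor bounded via Lemma~\ref{ME}~\eqref{LME1} and a $W$-factor bounded via Lemma~\ref{LMEW}~\eqref{MEW2}. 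The paper instead mimics the proof of Lemma~\ref{MW}: it decomposes over $n\in\N_0$ and $k\in\N_0$ (the value of $N_t$), rewrites $\tau_{n+1}$ as a measurable function of $\tau_n$, $Z^{(\delta)}_{\tau_n}$ and the shifted Poisson path, conditions on $\cF_{\tau_n}$, and then uses Lemma~\ref{PropDiscGrid}~\ref{STiii} to separate the shifted Brownian supremum from the remaining $\cF_{\tau_n}$-measurable and Poisson-measurable factors before summing back up. Your route avoids this double decomposition entirely by exploiting that Lemma~\ref{BMRandomTime} has already packaged the relevant independence; the paper's route is more self-contained in that it does not rely on Lemma~\ref{BMRandomTime}, but it is correspondingly longer.
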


The proof of this lemma works similarly to the proof of Lemma \ref{MW}. For completeness, it is stated in Appendix \ref{appendix}. 
Next, we define the set
\begin{equation}\label{setS}
    S = \Bigl(\bigcup_{k=1}^{\widetilde m+1} (\zeta_{k-1},\zeta_k)^2\Bigr)^c
\end{equation} 
and prove $p$-th moment estimates of the time average of $|Z_{t}^{(\delta)} - Z_{\underline t}^{(\delta)}|^2$ subject to an exception point $\zeta_k$, $k\in\{1,...,\widetilde m\}$ lying between $Z_t^{(\delta)}$ and $Z_{\underline t}^{(\delta)}$.

\begin{lemma}\label{LemExepSet}
    Let Assumption \ref{assZ} hold and let $p\in [1,\infty)$. Then  there exists  $c_8\in(0, \infty)$ such that for all $\delta\in(0,\delta_0]$, 
    \begin{equation}
        \E\Big[\Big|\int_0^T  |Z_{t}^{(\delta)} - Z_{\underline t}^{(\delta)}|^2\cdot \mathds{1}_{S}(Z_{t}^{(\delta)},Z_{\underline t}^{(\delta)}) \diff t \Big|^p\Big]^{1/p}\leq c_8 \delta^{2}. 
    \end{equation}
\end{lemma}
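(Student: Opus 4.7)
The plan is to decompose the integrand according to how close $Z^{(\delta)}_{\underline t}$ lies to the discontinuity set $\Theta$. Since $S \subseteq \bigcup_{k=1}^{\widetilde m} S_k$ with $S_k = \{(x,y)\in\R^2 : (x-\zeta_k)(y-\zeta_k) \leq 0\}$, and on $S_k$ one has $|Z^{(\delta)}_t - Z^{(\delta)}_{\underline t}| = |Z^{(\delta)}_t - \zeta_k| + |Z^{(\delta)}_{\underline t} - \zeta_k|$, the elementary inequality $(a+b)^2 \leq 2(a^2+b^2)$ reduces the task to bounding
\[
\int_0^T |Z^{(\delta)}_t - \zeta_k|^2 \mathds{1}_{S_k}\diff t \quad\text{and}\quad \int_0^T |Z^{(\delta)}_{\underline t} - \zeta_k|^2 \mathds{1}_{S_k}\diff t
\]
in $L^p$ for each fixed $k\in\{1,\ldots,\widetilde m\}$. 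For each $k$, I partition the state space according to $|Z^{(\delta)}_{\underline t} - \zeta_k|$ into $A_1^k = \{|Z^{(\delta)}_{\underline t}-\zeta_k|\leq\varepsilon_2^\delta\}$, $A_2^k = \{\varepsilon_2^\delta < |Z^{(\delta)}_{\underline t}-\zeta_k|\leq\varepsilon_1^\delta\}$, $A_3^k = \{\varepsilon_1^\delta < |Z^{(\delta)}_{\underline t}-\zeta_k|\leq\varepsilon_0\}$, and $A_4^k = \{|Z^{(\delta)}_{\underline t}-\zeta_k|>\varepsilon_0\}$, aligning with the cases of Lemma \ref{LemProbEst}.

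On the three ``far'' regions $A_j^k$ for $j\in\{2,3,4\}$, the event $S_k$ forces $|Z^{(\delta)}_t-Z^{(\delta)}_{\underline t}|\geq|Z^{(\delta)}_{\underline t}-\zeta_k|$, so Lemma \ref{LemProbEst}\ref{ProbEstii}--\ref{ProbEstiii} (supplemented on $A_4^k$ by a Markov-type estimate exploiting $|Z^{(\delta)}_{\underline t}-\zeta_k|>\varepsilon_0$ together with Lemma \ref{ME}\eqref{LME2}) yields $\P(S_k\cap A_j^k)\leq C_q\delta^q$ for arbitrarily large $q$. Applying Hölder's inequality in $L^p$ with exponent $2$, combined with the moment bound from Lemma \ref{ME}\eqref{LME2}, produces
\[
\||Z^{(\delta)}_t - Z^{(\delta)}_{\underline t}|^2\mathds{1}_{S_k\cap A_j^k}\|_p \leq \||Z^{(\delta)}_t - Z^{(\delta)}_{\underline t}|^2\|_{2p}\,\P(S_k\cap A_j^k)^{1/(2p)} \leq c'\delta\cdot C_q^{1/(2p)}\delta^{q/(2p)},
\]
which is at most $C\delta^2$ once $q\geq 2p$; Minkowski's integral inequality in time then closes these cases.

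The delicate case is $A_1^k$, where crossings need not be rare. For the $|Z^{(\delta)}_{\underline t} - \zeta_k|^2$ piece, bound it pointwise by $(\varepsilon_2^\delta)^2\mathds{1}_{A_1^k}$. To convert the occupation time of $\{Z^{(\delta)}_{\underline t}$ close to $\zeta_k\}$ into one involving $Z^{(\delta)}_t$, split on $\{|Z^{(\delta)}_t - Z^{(\delta)}_{\underline t}|\leq\varepsilon_2^\delta\}$ (where $\mathds{1}_{A_1^k}\leq\mathds{1}_{|Z^{(\delta)}_t-\zeta_k|\leq 2\varepsilon_2^\delta}$, so Lemma \ref{EVofOTF} with $f\equiv 1$ applies) and its complement (where Lemma \ref{LemProbEst}\ref{ProbEsti} gives $\P\leq C_q\delta^q$); interpolating the resulting $L^1$-bound with the trivial sup bound $T$ yields $\|\int_0^T\mathds{1}_{A_1^k}\diff t\|_p \leq C_p(\varepsilon_2^\delta)^{1/p}$, so the first piece is at most $C_p(\varepsilon_2^\delta)^{2+1/p}$ in $L^p$. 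For the second piece $\int_0^T |Z^{(\delta)}_t-\zeta_k|^2\mathds{1}_{S_k\cap A_1^k}\diff t$, an analogous split according to whether $|Z^{(\delta)}_t - Z^{(\delta)}_{\underline t}|\leq 2\varepsilon_2^\delta$ (in which case $|Z^{(\delta)}_t-\zeta_k|\leq 3\varepsilon_2^\delta$) combined with Lemma \ref{EVofOTF} for $f(x)=x^2$ gives a bound of the same type.

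The main obstacle is absorbing the polylogarithmic factors into the desired $\delta^2$ rate. Since $\varepsilon_2^\delta = \delta\log^4(\delta^{-1})$, the $A_1^k$ estimates take the form $C_p\delta^{2+1/p}\log^{K(p)}(\delta^{-1})$. Because $\delta^{1/p}\log^{K(p)}(\delta^{-1})\to 0$ as $\delta\to 0$, there exists a threshold $\delta_1 = \delta_1(p)\in(0,\delta_0]$ below which this factor is at most $1$, so the bound is $\leq C\delta^2$ on $(0,\delta_1]$. For $\delta\in(\delta_1,\delta_0]$, the crude estimate $\|\int_0^T|Z^{(\delta)}_t-Z^{(\delta)}_{\underline t}|^2\diff t\|_p \leq CT\delta \leq CT\delta_1^{-1}\delta^2$ from Minkowski and Lemma \ref{ME}\eqref{LME2} covers the remaining range, and setting $c_8$ equal to the maximum of the constants in both regimes completes the proof.
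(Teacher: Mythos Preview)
Your proof is correct and shares the paper's overall architecture: a four-region decomposition according to the distance of $Z^{(\delta)}_{\underline t}$ to $\Theta$, with the three outer regions handled by combining Lemma~\ref{LemProbEst} (or Markov's inequality on $A_4^k$) with the moment bound of Lemma~\ref{ME}. The genuine difference lies in the treatment of the innermost region $\Theta^{\varepsilon_2^\delta}$. The paper first applies Jensen's inequality and then invokes Lemma~\ref{MWwithInd}, which exploits the tiny step size $h^\delta \equiv \delta^2\log^4(\delta^{-1})$ on $\Theta^{\varepsilon_2^\delta}$ to obtain directly $\E\bigl[|Z^{(\delta)}_t-Z^{(\delta)}_{\underline t}|^{4p}\mathds{1}_{\Theta^{\varepsilon_2^\delta}}(Z^{(\delta)}_{\underline t})\bigr]\le C\delta^{4p}\log^{8p}(\delta^{-1})$; this is combined via Cauchy--Schwarz with the occupation-time bound $\int_0^T\P(Z^{(\delta)}_{\underline t}\in\Theta^{\varepsilon_2^\delta})\,\diff t\le C\delta\log^4(\delta^{-1})$ obtained from Lemma~\ref{EVofOTF}. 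You instead use the algebraic identity $|Z^{(\delta)}_t-Z^{(\delta)}_{\underline t}|=|Z^{(\delta)}_t-\zeta_k|+|Z^{(\delta)}_{\underline t}-\zeta_k|$ on $S_k$, bound each piece pointwise by $C\varepsilon_2^\delta$ on the good event, and lift the resulting $L^1$ occupation-time estimate to $L^p$ by interpolation with the trivial bound $T$. Your route has the pleasant feature of not requiring Lemma~\ref{MWwithInd} at all, while the paper's route is slightly more streamlined (no $S_k$ splitting or interpolation argument). As a minor remark, the threshold $\delta_1$ at the end is unnecessary: since $\delta^{1/p}\log^{K(p)}(\delta^{-1})\to 0$ as $\delta\to 0$ and is continuous on $(0,\delta_0]$, it is bounded there, so $C_p(\varepsilon_2^\delta)^{2+1/p}\le C'\delta^2$ holds uniformly.
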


\begin{proof}
    Since $\R = (\Theta^{\varepsilon_0})^c \cup (\Theta^{\varepsilon_0}\setminus \Theta^{\varepsilon_1^\delta}) \cup (\Theta^{\varepsilon_1^\delta}\setminus \Theta^{\varepsilon_2^\delta}) \cup \Theta^{\varepsilon_2^\delta}$, we split for all $\delta\in(0,\delta_0]$
    \begin{equation}
        \begin{aligned}
            &\E\Big[\Big|\int_0^T |Z_{t}^{(\delta)} - Z_{\underline t}^{(\delta)}|^2\cdot \mathds{1}_{S}(Z_{t}^{(\delta)},Z_{\underline t}^{(\delta)}) \diff t \Big|^p\Big]\\
            & \leq T^{p-1}\E\Big[\int_0^T  |Z_{t}^{(\delta)} - Z_{\underline t}^{(\delta)}|^{2p}\cdot \mathds{1}_{S}(Z_{t}^{(\delta)},Z_{\underline t}^{(\delta)}) \diff t \Big]\\
            &= T^{p-1} \Big( \E\Big[\int_0^T  |Z_{t}^{(\delta)} - Z_{\underline t}^{(\delta)}|^{2p}\cdot \mathds{1}_{S}(Z_{t}^{(\delta)},Z_{\underline t}^{(\delta)}) \mathds{1}_{(\Theta^{\varepsilon_0})^c}(Z_{\underline t}^{(\delta)}) \diff t \Big] \\
            &\quad\quad\quad\quad+ \E\Big[\int_0^T  |Z_{t}^{(\delta)} - Z_{\underline t}^{(\delta)}|^{2p}\cdot \mathds{1}_{S}(Z_{t}^{(\delta)},Z_{\underline t}^{(\delta)}) \mathds{1}_{\Theta^{\varepsilon_0}\setminus \Theta^{\varepsilon_1^\delta}}(Z_{\underline t}^{(\delta)}) \diff t \Big]\\
            &\quad\quad\quad\quad + \E\Big[\int_0^T  |Z_{t}^{(\delta)} - Z_{\underline t}^{(\delta)}|^{2p}\cdot \mathds{1}_{S}(Z_{t}^{(\delta)},Z_{\underline t}^{(\delta)}) \mathds{1}_{\Theta^{\varepsilon_1^\delta}\setminus \Theta^{\varepsilon_2^\delta}}(Z_{\underline t}^{(\delta)}) \diff t \Big]\\
            &\quad\quad\quad\quad+ \E\Big[\int_0^T  |Z_{t}^{(\delta)} - Z_{\underline t}^{(\delta)}|^{2p}\cdot \mathds{1}_{S}(Z_{t}^{(\delta)},Z_{\underline t}^{(\delta)}) \mathds{1}_{\Theta^{\varepsilon_2^\delta}}(Z_{\underline t}^{(\delta)}) \diff t \Big]\Big).
        \end{aligned} 
    \end{equation}
    For the first summand we note that for all $t\in[0,T]$ and all $\delta\in(0,\delta_0]$,
    \begin{equation}
        \{(Z_{t}^{(\delta)}, Z_{\underline t}^{(\delta)})\in S\}\cap \{Z_{\underline t}^{(\delta)} \in (\Theta^{\varepsilon_0})^c\}\subseteq \{|Z_{t}^{(\delta)}- Z_{\underline t}^{(\delta)}|\geq \varepsilon_0\}.
    \end{equation}
    Using this, Hölder's inequality, Markov's inequality, and Lemma \ref{ME} we obtain for all $\delta\in(0,\delta_0]$,
    \begin{equation}
        \begin{aligned}
            &\E\Big[\int_0^T  |Z_{t}^{(\delta)} - Z_{\underline t}^{(\delta)}|^{2p} \mathds{1}_{S}(Z_{t}^{(\delta)},Z_{\underline t}^{(\delta)}) \mathds{1}_{(\Theta^{\varepsilon_0})^c}(Z_{\underline t}^{(\delta)}) \diff t \Big] 
            \leq \int_0^T \E\Big[|Z_{t}^{(\delta)} - Z_{\underline t}^{(\delta)}|^{2p} \mathds{1}_{\{|Z_{t}^{(\delta)}- Z_{\underline t}^{(\delta)}|\geq \varepsilon_0\}}\Big] \diff t \\
            &\leq \int_0^T \E\Big[|Z_{t}^{(\delta)} - Z_{\underline t}^{(\delta)}|^{4p}\Big]^{\frac{1}{2}}\cdot \P( |Z_{t}^{(\delta)}- Z_{\underline t}^{(\delta)}|\geq \varepsilon_0 )^{\frac{1}{2}} \diff t
            \leq \frac{1}{\varepsilon^{2p}_0} \int_0^T \E\Big[|Z_{t}^{(\delta)} - Z_{\underline t}^{(\delta)}|^{4p}\Big] \diff t
            \leq \frac{T c_2}{\varepsilon^{2p}_0}  \delta^{2p}.
        \end{aligned} 
    \end{equation}    
    For the second summand we calculate using Lemma \ref{ME}, Lemma \ref{LemProbEst} \ref{ProbEstiii} with $\alpha=1$ and $q=2p$ that for all $\delta\in(0,\delta_0]$,
    \begin{equation}
        \begin{aligned}
            &\E\Big[\int_0^T  |Z_{t}^{(\delta)} - Z_{\underline t}^{(\delta)}|^{2p}\cdot \mathds{1}_{S}(Z_{t}^{(\delta)},Z_{\underline t}^{(\delta)}) \mathds{1}_{\Theta^{\varepsilon_0}\setminus \Theta^{\varepsilon_1^\delta}}(Z_{\underline t}^{(\delta)}) \diff t \Big]\\
            &\leq \int_0^T \E\Big[  |Z_{t}^{(\delta)} - Z_{\underline t}^{(\delta)}|^{4p}\Big]^\frac{1}{2} \P\Big( (Z_{t}^{(\delta)},Z_{\underline t}^{(\delta)}) \in S, Z_{\underline t}^{(\delta)} \in \Theta^{\varepsilon_0}\setminus \Theta^{\varepsilon_1^\delta}\Big)^{\frac{1}{2}} \diff t\\
            &\leq \int_0^T c_2^{\frac{1}{2}} \delta^p\, \P\Big(  |Z_{t}^{(\delta)}- Z_{\underline t}^{(\delta)}|\geq \varepsilon^\delta_1, Z_{\underline t}^{(\delta)} \in \Theta^{\varepsilon_0}\setminus \Theta^{\varepsilon_1^\delta}\Big)^{\frac{1}{2}} \diff t
            \leq T c_2^{\frac{1}{2}} c_6^{\frac{1}{2}} \delta^{2p}.
        \end{aligned} 
    \end{equation}
    Similar for the third summand we calculate using Lemma \ref{ME}, Lemma \ref{LemProbEst} \ref{ProbEstii} with $\alpha=1$ and $q=2p$ that for all $\delta\in(0,\delta_0]$,
    \begin{equation}
        \begin{aligned}
            &\E\Big[\int_0^T  |Z_{t}^{(\delta)} - Z_{\underline t}^{(\delta)}|^{2p}\cdot \mathds{1}_{S}(Z_{t}^{(\delta)},Z_{\underline t}^{(\delta)}) \mathds{1}_{\Theta^{\varepsilon_1^\delta}\setminus \Theta^{\varepsilon_2^\delta}}(Z_{\underline t}^{(\delta)}) \diff t \Big]\\
            &\leq \int_0^T \E\Big[  |Z_{t}^{(\delta)} - Z_{\underline t}^{(\delta)}|^{4p}\Big]^\frac{1}{2} \P\Big( (Z_{t}^{(\delta)},Z_{\underline t}^{(\delta)}) \in S, Z_{\underline t}^{(\delta)} \in \Theta^{\varepsilon_1^\delta}\setminus \Theta^{\varepsilon_2^\delta}\Big)^{\frac{1}{2}} \diff t\\
            &\leq \int_0^T c_2^{\frac{1}{2}} \delta^p\, \P\Big(  |Z_{t}^{(\delta)}- Z_{\underline t}^{(\delta)}|\geq d(  Z_{\underline t}^{(\delta)}, \Theta), Z_{\underline t}^{(\delta)} \in \Theta^{\varepsilon_1^\delta}\setminus \Theta^{\varepsilon_2^\delta}\Big)^{\frac{1}{2}} \diff t
            \leq T c_2^{\frac{1}{2}} c_6^{\frac{1}{2}} \delta^{2p}.
        \end{aligned} 
    \end{equation}
    For the fourth summand we calculate using Hölder's inequality for all $\delta\in(0,\delta_0]$,
    \begin{equation}\label{ExepSeteq41}
        \begin{aligned}
            &\E\Big[\int_0^T  |Z_{t}^{(\delta)} - Z_{\underline t}^{(\delta)}|^{2p}\cdot \mathds{1}_{S}(Z_{t}^{(\delta)},Z_{\underline t}^{(\delta)}) \mathds{1}_{\Theta^{\varepsilon_2^\delta}}(Z_{\underline t}^{(\delta)}) \diff t \Big]\\
            &\leq \int_0^T \E\Big[  |Z_{t}^{(\delta)} - Z_{\underline t}^{(\delta)}|^{4p} \mathds{1}_{\Theta^{\varepsilon_2^\delta}}(Z_{\underline t}^{(\delta)})\Big]^\frac{1}{2} \P\big(  Z_{\underline t}^{(\delta)} \in \Theta^{\varepsilon_2^\delta}\big)^{\frac{1}{2}} \diff t.
        \end{aligned} 
    \end{equation}
    Further we calculate for all $\delta\in(0,\delta_0]$ and all $t\in[0,T]$,
    \begin{equation}\label{ExepSeteq42}
        \begin{aligned}
            &\E\Big[  |Z_{t}^{(\delta)} - Z_{\underline t}^{(\delta)}|^{4p} \mathds{1}_{\Theta^{\varepsilon_2^\delta}}(Z_{\underline t}^{(\delta)})\Big]\\
            &\leq c\Big(\E\Big[ \big|\widetilde \mu(Z^{(\delta)}_{\underline t}) (t-\underline t) \big|^{4p} \mathds{1}_{\Theta^{\varepsilon_2^\delta}}(Z_{\underline t}^{(\delta)}) \Big]
            + \E\Big[  \big|\widetilde \sigma (Z^{(\delta)}_{\underline t}) (W_t- W_{\underline t})\big|^{4p} \mathds{1}_{\Theta^{\varepsilon_2^\delta}}(Z_{\underline t}^{(\delta)}) \Big]\\
            &\quad\quad\quad + \E\Big[  \big|\widetilde \sigma  (Z^{(\delta)}_{\underline t}) d_{\widetilde \sigma} (Z^{(\delta)}_{\underline t}) \frac{1}{2}((W_t-W_{\underline t})^2 -(t-\underline{t}))\big|^{4p} \mathds{1}_{\Theta^{\varepsilon_2^\delta}}(Z_{\underline t}^{(\delta)}) \Big]\Big).
        \end{aligned} 
    \end{equation}  
    For all $n\in\N$, and all $\omega\in\{ Z_{\underline t}^{(\delta)} \in \Theta^{\varepsilon_2^\delta}\}$ we have $t-\underline t(\omega) \leq \delta^2 \log^4(\delta^{-1})$. Using this and Lemma \ref{ME} we calculate for all $\delta\in(0,\delta_0]$ and all $t\in[0,T]$,
    \begin{equation}\label{ExepSeteq43}
        \begin{aligned}
            &\E\Big[ \big|\widetilde \mu(Z^{(\delta)}_{\underline t}) (t-\underline t) \big|^{4p} \mathds{1}_{\Theta^{\varepsilon_2^\delta}}(Z_{\underline{t}}^{(\delta)}) \Big]
            \leq \E\Big[c_{\widetilde \mu}^{4p} 2^{4p-1} (1+ |Z^{(\delta)}_{\underline t}|^{4p}) \delta^{8p} \log^{16p}(\delta^{-1}) \mathds{1}_{\Theta^{\varepsilon_2^\delta}}(Z_{\underline t}^{(\delta)}) \Big]\\
            &\leq c_{\widetilde \mu}^{4p} 2^{4p-1} \delta^{8p} \log^{16p}(\delta^{-1})  \Big(1+ \E\Big[ \sup_{t\in[0,T]} |Z^{(\delta)}_{t}|^{4p}\Big]\Big)
            \leq c_{\widetilde \mu}^{4p} 2^{4p-1} (1+ c_1) \delta^{8p} \log^{16p}(\delta^{-1}).
        \end{aligned} 
    \end{equation} 
    For the second summand we use Lemma \ref{MWwithInd} to obtain for all $\delta\in(0,\delta_0]$ and all $t\in[0,T]$,
    \begin{equation}\label{ExepSeteq44}
        \begin{aligned}
            &\E\Big[ \big|\widetilde \sigma (Z^{(\delta)}_{\underline t}) (W_t- W_{\underline t})\big|^{4p} \mathds{1}_{\Theta^{\varepsilon_2^\delta}}(Z_{\underline t}^{(\delta)})\Big]
            \leq \E\Big[  c_{\widetilde \sigma}^{4p} 2^{4p-1} (1+ |Z^{(\delta)}_{\underline t}|^{4p})  (W_t- W_{\underline t})^{4p} \mathds{1}_{\Theta^{\varepsilon_2^\delta}}(Z_{\underline t}^{(\delta)}) \Big]\\
            &\leq c_{\widetilde \sigma}^{4p} 2^{4p-1} c_7  \delta^{4p} \log^{8p}(\delta^{-1}).
        \end{aligned} 
    \end{equation}  
    For the third summand we combine the previous arguments to obtain for all $\delta\in(0,\delta_0]$ and all $t\in[0,T]$,
    \begin{equation}\label{ExepSeteq45}
        \begin{aligned}
            &\E\Big[ \big|\widetilde (Z^{(\delta)}_{\underline t}) \sigma d_{\widetilde \sigma} (Z^{(\delta)}_{\underline t}) \frac{1}{2}((W_t-W_{\underline t})^2 -(t-\underline t))\big|^{4p} \mathds{1}_{\Theta^{\varepsilon_2^\delta}}(Z_{\underline t}^{(\delta)}) \Big]\\
            &\leq c_{\widetilde \sigma}^{4p} L_{\widetilde\sigma}^{4p}2^{4p-2} \E\Big[ (1+ |Z^{(\delta)}_{\underline t}|^{4p}) \big(|W_t-W_{\underline t}|^{8p} +|t-\underline t|^{4p}\big)  \mathds{1}_{\Theta^{\varepsilon_2^\delta}}(Z_{\underline t}^{(\delta)}) \Big]\\
            &\leq c_{\widetilde \sigma}^{4p} L_{\widetilde\sigma}^{4p}2^{4p-2} \Big(\E\Big[ \big(1+ \big|Z^{(\delta)}_{\underline{t}}\big|^{4p}\big)   |W_t-W_{\underline{t}}|^{4p} \mathds{1}_{\Theta^{\varepsilon_2^\delta}}(Z_{\underline t}^{(\delta)})\Big] + \delta^{8p}\log^{16p}(\delta^{-1}) \E\big[ 1+ \big|Z^{(\delta)}_{\underline{t}}\big|^{4p}\big] \Big)\\
            &\leq c_{\widetilde \sigma}^{4p} L_{\widetilde\sigma}^{4p}2^{4p-2} \Big( c_7  \delta^{4p} \log^{8p}(\delta^{-1}) + \delta^{8p}\log^{16p}(\delta^{-1}) \Big)(1+ c_1)\\
        \end{aligned} 
    \end{equation}  
    Hence combining \eqref{ExepSeteq42}, \eqref{ExepSeteq43}, \eqref{ExepSeteq44} and \eqref{ExepSeteq45} there exists a constant $\widetilde c_1\in (0,\infty)$ such that for all $\delta\in(0,\delta_0]$ and all $t\in[0,T]$,
    \begin{equation}\label{ExepSeteq46}
        \begin{aligned}
            &\E\Big[  |Z_{t}^{(\delta)} - Z_{\underline t}^{(\delta)}|^{4p} \mathds{1}_{\Theta^{\varepsilon_2^\delta}}(Z_{\underline t}^{(\delta)})\Big]
            \leq \widetilde c_1 \delta^{4p} \log^{8p}(\delta^{-1}).
        \end{aligned} 
    \end{equation}  
    Further we calculate using Jensen's inequality, Lemma \ref{EVofOTF} with $f\equiv 1$ and $\gamma = 1/2$ and Lemma \ref{LemProbEst} \ref{ProbEsti} with $\alpha =1$ and $q =1$ that there exists a constant $\widetilde c_2\in(0,\infty)$ such that for all $\delta\in(0,\delta_0]$,
    \begin{equation}\label{ExepSeteq47}
        \begin{aligned}
            &\int_0^T \P\big(  Z_{\underline t}^{(\delta)} \in \Theta^{\varepsilon_2^\delta}\big)^{\frac{1}{2}} \diff t
            \leq \widetilde c_2 \Big(\int_0^T \P(  Z_{\underline t}^{(\delta)} \in \Theta^{\varepsilon_2^\delta}) \diff t\Big)^{\frac{1}{2}}\\
            &= \widetilde c_2\Big(\int_0^T \P(|Z_{t}^{(\delta)} - Z_{\underline t}^{(\delta)}| <\varepsilon_2^\delta,   Z_{\underline t}^{(\delta)} \in \Theta^{\varepsilon_2^\delta}) \diff t + \int_0^T \P(|Z_{t}^{(\delta)} - Z_{\underline t}^{(\delta)}| \geq \varepsilon_2^\delta,   Z_{\underline t}^{(\delta)} \in \Theta^{\varepsilon_2^\delta}) \diff t \Big)^{\frac{1}{2}}\\
            &\leq \widetilde c_2 \Big(\int_0^T \P(Z_{t}^{(\delta)} \in \Theta^{2\varepsilon_2^\delta}) \diff t + \int_0^T \P(|Z_{t}^{(\delta)} - Z_{\underline t}^{(\delta)}| \geq \varepsilon_2^\delta,   Z_{\underline t}^{(\delta)} \in \Theta^{\varepsilon_2^\delta}) \diff t \Big)^{\frac{1}{2}}\\
            &\leq \widetilde c_2\big( 2c_5 (2 \varepsilon_2^\delta +\delta) + c_6 \delta\big)^{1/2}
            \leq \widetilde c_2 \big((5c_5 +c_6) \delta \log^4(\delta^{-1})\big)^{1/2}.
        \end{aligned} 
    \end{equation}
    Plugging \eqref{ExepSeteq46} and \eqref{ExepSeteq47} in \eqref{ExepSeteq41} we get that there exists $\widetilde c_3, \widetilde c_4 \in(0,\infty)$ such that for all $\delta\in(0,\delta_0]$,
    \begin{equation}\label{ExepSeteq48}
        \begin{aligned}
            &\E\Big[\int_0^T  |Z_{t}^{(\delta)} - Z_{\underline t}^{(\delta)}|^{2p}\cdot \mathds{1}_{S}(Z_{t}^{(\delta)},Z_{\underline t}^{(\delta)}) \mathds{1}_{\Theta^{\varepsilon_2^\delta}}(Z_{\underline t}^{(\delta)}) \diff t \Big]\\
            &\leq \big(\widetilde c_1 \delta^{4p} \log^{8p}(\delta^{-1})\big)^{1/2} \widetilde c_2 \big( (5c_5 +c_6) \delta \log^4(\delta^{-1}) \big)^{1/2}
            \leq \widetilde c_3 \delta^{2p+1/2} \log^{4p+2}(\delta^{-1}) \leq \widetilde c_4 \delta^{2p}.
        \end{aligned} 
    \end{equation}
    This finishes the proof.
\end{proof}

\subsection{Convergence result}

Now we are able to prove the convergence rate for the jump-adapted quasi-Milstein scheme in the next theorem. The proof of the following two results follows the general strategy introduced in \cite[Proof of Theorem 3]{muellergronbach2019b}, which was extended in \cite[Section 5.4]{Yaroslavtseva2022} to adaptive step-size functions and in \cite[Proof of Theorem 3.14]{PSS2024JMS} to jump-adapted schemes. We make use of the general strategy as well as both extensions to prove for SDE \eqref{eq:TSDE} and the approximation scheme \eqref{SumAppr} the same convergence result as in \cite{Yaroslavtseva2022}. 
We prove the following auxiliary lemma first.

\begin{lemma}\label{AuxLemma}
    Let Assumption \ref{assZ} hold. Then for all $p\in[1,\infty)$ there exists a constant $c_{9}\in(0,\infty)$ such that for all $\delta\in (0,\delta_0]$,
    \begin{equation}
        \begin{aligned}
        \E\Big[\sup_{s\in[0,T]}\Big|\int_0^s  \widetilde\sigma(Z^{(\delta)}_{\underline u}) d_{\widetilde\mu}(Z^{(\delta)}_{\underline u}) (W_u-W_{\underline u})  \diff u\Big|^p \Big]^{\frac{1}{p}} \leq c_{9} \delta.
        \end{aligned}
    \end{equation}
\end{lemma}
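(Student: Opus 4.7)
The plan is to convert the Lebesgue integral against $(W_u - W_{\underline u})\diff u$ into an Itô stochastic integral via stochastic Fubini, so that the small factor $W_u - W_{\underline u}$ is absorbed into a deterministic prefactor of size $\delta$ rather than appearing pointwise as a factor of order $\sqrt\delta$. A naive Cauchy--Schwarz/Hölder bound (using Lemma \ref{MW}) only yields order $\delta^{1/2}$, so this cancellation is essential. By Jensen's inequality, the claim for $p\in[1,2)$ follows from the case $p=2$, so I assume $p\in[2,\infty)$ from now on.

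Set $Y_u := \widetilde\sigma(Z^{(\delta)}_{\underline u})\,d_{\widetilde\mu}(Z^{(\delta)}_{\underline u})$, which is piecewise constant on each $[\tau_n,\tau_{n+1})$ and satisfies $|Y_u| \le L_{\widetilde\mu}\,c_{\widetilde\sigma}(1+|Z^{(\delta)}_{\underline u}|)$ by Remark \ref{ConAss}, and define $\overline u := \min\{\tau_k : \tau_k > u\}$. Applying the stochastic Fubini theorem on each sub-interval $[\tau_n,\tau_{n+1}\wedge t]$ gives
\begin{equation}
\int_{\tau_n}^{\tau_{n+1}\wedge t}(W_u - W_{\tau_n})\,\diff u = \int_{\tau_n}^{\tau_{n+1}\wedge t}(\tau_{n+1}\wedge t - s)\,\diff W_s,
\end{equation}
and summing in $n$ while separating out the partial terminal sub-interval containing $t$ produces the decomposition
\begin{equation}
\int_0^t Y_u (W_u - W_{\underline u})\,\diff u = \widetilde M_t - Y_{\underline t}(\overline t - t)(W_t - W_{\underline t}),\qquad \widetilde M_t := \int_0^t Y_{\underline u}(\overline u - u)\,\diff W_u,
\end{equation}
where $\widetilde M$ is a true martingale because $\overline u$ is $\F$-predictable: it equals $\tau_{n+1}$ on $(\tau_n,\tau_{n+1}]$, and $\tau_{n+1}$ is $\F_{\tau_n}$-measurable by Lemma \ref{PropDiscGrid}\ref{STi}.

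For the martingale part I would apply Lemma \ref{BDGaKunita} with $Z=W$, using $|\overline u - u|\le h^\delta(Z^{(\delta)}_{\underline u})\le \delta$, to get $\E[\sup_{t\in[0,T]}|\widetilde M_t|^p] \le \hat c\,\delta^p \int_0^T \E[|Y_{\underline u}|^p]\,\diff u \le C\delta^p$, where the bound on $\E[|Y_{\underline u}|^p]$ follows from the linear growth of $\widetilde\sigma$, the bound $\|d_{\widetilde\mu}\|_\infty\le L_{\widetilde\mu}$ in Remark \ref{ConAss}, and Lemma \ref{ME}\eqref{LME1}. The boundary correction is bounded pointwise by $\delta\, L_{\widetilde\mu}c_{\widetilde\sigma}\big(1+\sup_{s\in[0,T]}|Z^{(\delta)}_s|\big)\cdot 2\sup_{s\in[0,T]}|W_s|$, and Cauchy--Schwarz together with \eqref{LME1} and a standard moment bound for $\sup_{s\in[0,T]}|W_s|$ delivers a matching $C\delta^p$ estimate. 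Combining the two pieces and taking $p$-th roots yields the claim. The main technical care lies in the stochastic Fubini step and the bookkeeping of the terminal sub-interval: one must verify predictability of $\overline u$ with respect to $\F$ and check that the discrepancy between $(\tau_{n+1}\wedge t - s)$ on the partial last interval and $(\overline s - s)$ in the clean martingale $\widetilde M$ is exactly $Y_{\underline t}(\overline t - t)(W_t - W_{\underline t})$; once this identification is in place, the remaining estimates are routine applications of tools already assembled in the paper.
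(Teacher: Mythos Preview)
Your stochastic Fubini approach is a genuinely different route from the paper's proof. The paper instead shows that $(U_{\delta,\tau_n})_n$, with $U_{\delta,s} = \int_0^s Y_u(W_u-W_{\underline u})\diff u$, is a \emph{discrete} martingale along the random grid with respect to the enlarged filtration $\widetilde\F$, then applies the discrete Burkholder--Davis--Gundy inequality to control $\max_n|U_{\delta,\tau_n}|$ and bounds the within-interval oscillation separately. Your continuous-integral representation via $\widetilde M_t=\int_0^t Y_{\underline u}(\overline u-u)\diff W_u$ is arguably slicker and avoids the discrete BDG step, at the price of the boundary-term bookkeeping you describe.

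There is, however, a genuine measurability gap. You claim that $\tau_{n+1}$ is $\F_{\tau_n}$-measurable and cite Lemma~\ref{PropDiscGrid}\ref{STi} for it, but that item only says each $\tau_n$ is a stopping time. In fact $\tau_{n+1}$ is \emph{not} $\F_{\tau_n}$-measurable here: by \eqref{EqTauGen}, $\tau_{n+1}$ depends on $\min\{\nu_i:\nu_i>\tau_n\}$, the next Poisson jump after $\tau_n$, which is not known in $\F_{\tau_n}$. Hence your integrand $Y_{\underline u}(\overline u-u)$ is not $\F$-predictable, and Lemma~\ref{BDGaKunita} (stated for $\F$-predictable integrands) does not apply as written. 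The repair is precisely the device the paper uses: work in the enlarged filtration $\widetilde\F$, under which all of $N$ is revealed at time~$0$. Then $\tau_{n+1}$ \emph{is} $\widetilde\F_{\tau_n}$-measurable (this is exactly what is used in the proof of Lemma~\ref{BMRandomTime}), so your integrand is $\widetilde\F$-predictable, $W$ remains a Brownian motion with respect to $\widetilde\F$, and a standard BDG inequality for $\widetilde\F$-martingales finishes the job. With this correction your argument goes through.
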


\begin{proof}
    By Hölder's inequality it is enough to prove the claim for $p\in\N\setminus \{1\}$.
    We define for all $\delta\in[0,\delta_0)$ and $s\in[0,T]$,
    \begin{equation}\label{M13}
        \begin{aligned}
        &U_{\delta,s} = \int_0^s  \widetilde\sigma(Z^{(\delta)}_{\underline u}) d_{\widetilde\mu}(Z^{(\delta)}_{\underline u}) (W_u-W_{\underline u})  \diff u.
        \end{aligned}
    \end{equation}
    It holds for all $\delta\in[0,\delta_0)$, $n\in\N$, and $s\in[\tau_n,\tau_{n+1}]$ that
    \begin{equation}\label{M14}
        \begin{aligned}
        &U_{\delta,s} =  U_{\delta,\tau_n} + \int_{\tau_n}^s \widetilde\sigma(Z^{(\delta)}_{\tau_n}) d_{\widetilde\mu}(Z^{(\delta)}_{\tau_n}) (W_u-W_{\tau_n}) \diff u.
        \end{aligned}
    \end{equation}
    In the following we prove that $(U_{\delta,\tau_n})_{n\in\N}$ is a discrete martingale with respect to the filtration $(\widetilde{\mathbb{F}}_{\tau_n})_{n\in\N}$. 
    By Lemma \ref{MW} and Lemma \ref{ME}, for all $n\in\N$ and all $\delta\in(0,\delta_0]$,
    \begin{equation}\label{M15}
        \begin{aligned}
        &\E[|U_{\delta,\tau_n}|] 
        = \E\Big[\Big| \int_0^{\tau_n}    \widetilde\sigma(Z^{(\delta)}_{\underline u}) d_{\widetilde\mu}(Z^{(\delta)}_{\underline u}) (W_u-W_{\underline u}) \diff u \Big|\Big]
        \leq L_{\widetilde\mu} c_{\widetilde\sigma} \int_0^{T} \E\Big[  (1+ |Z^{(\delta)}_{\underline u}|) \big| W_u-W_{\underline u}\big|\Big] \diff u \\
        &\leq L_{\widetilde\mu} c_{\widetilde\sigma}c_{W_1} \delta^{\frac{1}{2}} \int_0^{T} \Big( 1+ \E\Big[ \sup_{t\in[0,T]}|Z^{(\delta)}_{t}| \Big]\Big)\diff u 
        \leq L_{\widetilde\mu} c_{\widetilde\sigma}c_{W_1}  \delta^{\frac{1}{2}} 2T ( 1+ c_1) <\infty.
        \end{aligned}
    \end{equation}
    Note that $U_{\delta,\tau_n}$ is $\widetilde{\mathbb{F}}_{\tau_n}$-measurable for all $n\in\N$.
    Hence it remains to prove the martingale property. For this recall that by the proof of Lemma \ref{MW} we can express $\tau_{n+1}$ as a measurable function $f$ of $\tau_n$, $Z^{(\delta)}_{\tau_n}$, and $(N_{\tau_n+s} -N_{\tau_n})_{s\geq 0}$ and calculate for all $n\in\N$ and all $\delta\in(0,\delta_0]$,
    \begin{equation}\label{M19}
        \begin{aligned}
        &\E\Big[ U_{\delta,\tau_{n+1}} - U_{\delta,\tau_{n}}  \Big|\widetilde{\mathbb{F}}_{\tau_n} \Big]
        = \E\Big[ \int_{\tau_n}^{\tau_{n+1}} \widetilde\sigma(Z^{(\delta)}_{\tau_n}) d_{\widetilde\mu}(Z^{(\delta)}_{\tau_n}) (W_u-W_{\tau_n}) \diff u \Big|\widetilde{\mathbb{F}}_{\tau_n} \Big] \\
        &= \E\Big[ \int_{0}^{\tau_{n+1}-\tau_n} \widetilde\sigma(Z^{(\delta)}_{\tau_n}) d_{\widetilde\mu}(Z^{(\delta)}_{\tau_n}) (W_{u+\tau_n}-W_{\tau_n}) \diff u \Big|\widetilde{\mathbb{F}}_{\tau_n} \Big] \\
        &= \int_{0}^{\tau_{n+1}-\tau_n} \widetilde\sigma(Z^{(\delta)}_{\tau_n}) d_{\widetilde\mu}(Z^{(\delta)}_{\tau_n})  \E\Big[ (W_{u+\tau_n}-W_{\tau_n})  \Big|\widetilde{\mathbb{F}}_{\tau_n} \Big] \diff u \\
        &= \int_{0}^{\tau_{n+1}-\tau_n} \widetilde\sigma(Z^{(\delta)}_{\tau_n}) d_{\widetilde\mu}(Z^{(\delta)}_{\tau_n})  \cdot 0 \diff u = 0
        \end{aligned}
    \end{equation}
    $\P$-almost surely.
    Observe that for all $\delta\in(0,\delta_0]$,
    \begin{equation}\label{M25}
        \begin{aligned}
        &\sup_{0\leq s\leq T} |U_{\delta,s}| = \max_{n\in\N} \sup_{\tau_n\leq s \leq \tau_{n+1}} |U_{\delta,s}| \\
        &\leq \max_{n\in\N} |U_{\delta,\tau_n}| + \max_{n\in\N} \sup_{\tau_n\leq s \leq \tau_{n+1}} \Big| \int_{\tau_n}^s \widetilde\sigma(Z^{(\delta)}_{\tau_n}) d_{\widetilde\mu}(Z^{(\delta)}_{\tau_n}) (W_u-W_{\tau_n}) \diff u \Big| \\
        &\leq \max_{n\in\N} |U_{\delta,\tau_n}| + c_{\widetilde\sigma} L_{\widetilde\sigma} \max_{n\in\N} \int_{\tau_n}^{\tau_{n+1}} \big( 1 +  |Z^{(\delta)}_{\tau_n}|\big)|W_u-W_{\tau_n}| \diff u.
        \end{aligned}
    \end{equation}
    Next we estimate the expectation of the $p$-th moment separately. For the first summand we obtain by the discrete Burkholder-Davis-Gundy inequality and Jensen's inequality that there exists a constant $\widetilde c_1\in(0,\infty)$ such that for all $\delta\in(0,\delta_0]$,
    \begin{equation}\label{M27}
        \begin{aligned}
        &\E\Big[\max_{n\in\N} |U_{\delta,\tau_n}|^p\Big] 
        \leq \widetilde c_1\, \E\Big[\Big( \sum_{n\in\N} |U_{\delta, \tau_{n+1}}- U_{\delta, \tau_n}|^2\Big)^{\frac{p}{2}}\Big] \\
        &\leq \widetilde c_1\, \E\Big[\Big( \sum_{n\in\N} \delta \int_{\tau_n}^{\tau_{n+1}} | \widetilde\sigma(Z^{(\delta)}_{\tau_n}) d_{\widetilde\mu}(Z^{(\delta)}_{\tau_n})(W_u - W_{\tau_n}) |^2 \diff u\Big)^{\frac{p}{2}}\Big] \\
        &\leq \widetilde c_1\,\delta^{\frac{p}{2}}\, c_{\widetilde\sigma}^{p} L_{\widetilde\mu}^{p} \E\Big[\Big( \sum_{n\in\N}  \int_{0}^{T} \big(1+|Z^{(\delta)}_{\tau_n}|\big)^2 |W_u - W_{\tau_n}|^2 \mathds{1}_{(\tau_n,\tau_{n+1}]} (u) \diff u\Big)^{\frac{p}{2}}\Big] \\
        &\leq \widetilde c_1\,\delta^{\frac{p}{2}}\, c_{\widetilde\sigma}^{p} L_{\widetilde\mu}^{p} T^{\frac{p}{2}-1} \int_{0}^{T} \E\Big[\sum_{n\in\N}   \big(1+|Z^{(\delta)}_{\tau_n}|\big)^p |W_u - W_{\tau_n}|^p \mathds{1}_{(\tau_n,\tau_{n+1}]} (u) \Big] \diff u\\
        &\leq \widetilde c_1\,\delta^{\frac{p}{2}}\, c_{\widetilde\sigma}^{p} L_{\widetilde\mu}^{p} T^{\frac{p}{2}-1} \int_{0}^{T} \sum_{n\in\N} \E\Big[   \big(1+|Z^{(\delta)}_{\tau_n}|\big)^p \E\big[ \sup_{s\in[0,\delta]} |W_{s+\tau_n} - W_{\tau_n}|^p \big| \widetilde{\mathbb{F}}_{\tau_n} \big] \mathds{1}_{(\tau_n,\tau_{n+1}]} (u) \Big] \diff u\\
        &\leq \widetilde c_1\,\delta^{p}\, c_{\widetilde\sigma}^{p} L_{\widetilde\mu}^{p} T^{\frac{p}{2}-1} c_{W_p} 2^{p-1} \int_{0}^{T}  \sum_{n\in\N} \E\Big[   \big(1+\sup_{t\in[0,T]}|Z^{(\delta)}_{t}|^p\big) \mathds{1}_{(\tau_n,\tau_{n+1}]} (u) \Big] \diff u\\
        &\leq \widetilde c_1\,\delta^{p}\, c_{\widetilde\sigma}^{p} L_{\widetilde\mu}^{p} T^{\frac{p}{2}-1} c_{W_p} 2^{p-1} T(1+c_1).
        \end{aligned}
    \end{equation}
    For the expectation of the second summand of \eqref{M25} we calculate with similar steps as in \eqref{M27} that there exists a constant $\widetilde c_2\in(0,\infty)$ such that for all $\delta\in(0,\delta_0]$,
    \begin{equation}\label{M29}
        \begin{aligned}
        & \E\Big[\Big( \max_{n\in\N} \int_{\tau_n}^{\tau_{n+1}} \big( 1 +  |Z^{(\delta)}_{\tau_n}|\big)|W_u-W_{\tau_n}| \diff u \Big)^p\Big]\\
        & \leq \delta^{p-1} \E\Big[ \max_{n\in\N} \int_{\tau_n}^{\tau_{n+1}} \big( 1 +  |Z^{(\delta)}_{\tau_n}|\big)^p|W_u-W_{\tau_n}|^p \diff u \Big]\\
        & \leq \delta^{p-1} \int_{0}^{T}\E\Big[ \max_{n\in\N}  \big( 1 +  |Z^{(\delta)}_{\tau_n}|\big)^p|W_u-W_{\tau_n}|^p \mathds{1}_{(\tau_n,\tau_{n+1}]} (u)  \Big] \diff u\\
        &\leq \delta^{p-1} \int_{0}^{T}\E\Big[ \sum_{n\in\N}  \big( 1 +  |Z^{(\delta)}_{\tau_n}|\big)^p|W_u-W_{\tau_n}|^p \mathds{1}_{(\tau_n,\tau_{n+1}]} (u)  \Big] \diff u\\
        &\leq \delta^{p-1} \delta^{\frac{p}{2}} c_{W_p} 2^{p-1} T(1+c_1)
        \leq \widetilde c_2 \delta^{p}.
        \end{aligned}
    \end{equation}
    Combining \eqref{M25}, \eqref{M27}, and \eqref{M29} we obtain that there exists a constant $ c_{9}\in(0,\infty)$ such that for all $\delta\in(0,\delta_0]$,
    \begin{equation}\label{M30}
        \begin{aligned}
        &  \E\Big[\sup_{0\leq s\leq T} |U_{\delta,s}|^p\Big]\leq  c_{9} \delta^{p},
        \end{aligned}
    \end{equation}
    which proves the claim.
\end{proof}

\begin{theorem}\label{ConvResTSDE}
Let Assumption \ref{assZ} hold. Then for all $p\in[1,\infty)$ there exists a constant $c_{10}\in(0,\infty)$ such that for all $\delta\in (0,\delta_0]$,
\begin{equation}
    \begin{aligned}
    \E\Big[\sup_{t\in[0,T]}\big| Z(t) - Z^{(\delta)}(t)\big|^p \Big]^{\frac{1}{p}} \leq c_{10} \delta.
    \end{aligned}
\end{equation}
\end{theorem}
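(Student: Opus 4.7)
The plan is to bound $\E[\sup_{s\le t}|Z_s - Z_s^{(\delta)}|^p]$ by a Gr\"onwall-type inequality after splitting the error into drift, diffusion and jump parts. Writing $e_t = Z_t - Z_t^{(\delta)}$ and using the integral representations from \eqref{SumAppr}, I decompose
\begin{equation}
e_t = A_t + B_t + C_t,
\end{equation}
where $A_t$, $B_t$, $C_t$ are the $du$, $dW_u$, $dN_u$ integrals of $\widetilde\mu(Z_u)-\sum_n\widetilde\mu(Z_{\tau_n}^{(\delta)})\mathds{1}_{(\tau_n,\tau_{n+1}]}(u)$, of the analogous diffusion difference, and of $\widetilde\rho(Z_{u-})-\widetilde\rho(Z_{u-}^{(\delta)})$, respectively. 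In each, I add and subtract $\widetilde\mu(Z_u^{(\delta)})$ (resp.\ $\widetilde\sigma(Z_u^{(\delta)})$) to separate a Lipschitz piece controlled by $|Z_u-Z_u^{(\delta)}|$ (which will feed Gr\"onwall via Lemma \ref{BDGaKunita}) from a one-step discretisation error of the scheme.

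For the drift the one-step error is $\widetilde\mu(Z_u^{(\delta)}) - \widetilde\mu(Z_{\underline u}^{(\delta)})$. On the complement of the exception set $S$ of \eqref{setS}, Assumption \ref{assZ}\ref{Zii} permits a first-order expansion with slope $d_{\widetilde\mu}(Z_{\underline u}^{(\delta)})$ and a remainder of order $|Z_u^{(\delta)}-Z_{\underline u}^{(\delta)}|^2$. Substituting the one-step increment
\begin{equation}
Z_u^{(\delta)}-Z_{\underline u}^{(\delta)} = \widetilde\mu(Z_{\underline u}^{(\delta)})(u-\underline u) + \widetilde\sigma(Z_{\underline u}^{(\delta)})(W_u-W_{\underline u}) + \tfrac12\widetilde\sigma(Z_{\underline u}^{(\delta)})d_{\widetilde\sigma}(Z_{\underline u}^{(\delta)})\big((W_u-W_{\underline u})^2-(u-\underline u)\big),
\end{equation}
the dominating piece $\widetilde\sigma(Z_{\underline u}^{(\delta)})d_{\widetilde\mu}(Z_{\underline u}^{(\delta)})(W_u-W_{\underline u})$ (after integration in $u$) is bounded in $L^p$ by $C\delta$ thanks to Lemma \ref{AuxLemma}; the remaining pieces are of size $\delta$ by Lemma \ref{MW} and Lemma \ref{ME}. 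On $S$ I use the global Lipschitz continuity of $\widetilde\mu$ from Assumption \ref{assZ}\ref{Zi} to bound the one-step error by $C|Z_u^{(\delta)}-Z_{\underline u}^{(\delta)}|$ and then invoke Lemma \ref{LemExepSet} (after Cauchy--Schwarz) to obtain an $L^p$ contribution of order $\delta^{2}$.

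For the diffusion, the key is that the quasi-Milstein correction cancels exactly the first-order Taylor term: on $S^c$,
\begin{equation}
\widetilde\sigma(Z_u^{(\delta)}) - \widetilde\sigma(Z_{\underline u}^{(\delta)}) - \widetilde\sigma(Z_{\underline u}^{(\delta)})d_{\widetilde\sigma}(Z_{\underline u}^{(\delta)})(W_u-W_{\underline u}) = d_{\widetilde\sigma}(Z_{\underline u}^{(\delta)})\big[(Z_u^{(\delta)}-Z_{\underline u}^{(\delta)})-\widetilde\sigma(Z_{\underline u}^{(\delta)})(W_u-W_{\underline u})\big] + R_u,
\end{equation}
with $|R_u|\le b_{\widetilde\sigma}|Z_u^{(\delta)}-Z_{\underline u}^{(\delta)}|^2$ by Remark \ref{ConAss}. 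The bracketed term consists of the $du$ contribution, a genuine $O(\delta)$ quantity, and the L\'evy-area-like $\tfrac12\widetilde\sigma d_{\widetilde\sigma}((W_u-W_{\underline u})^2-(u-\underline u))$ term. Squaring, integrating and applying the Burkholder--Davis--Gundy inequality of Lemma \ref{BDGaKunita} yields an $L^p$ contribution of order $\delta$. On $S$, the Taylor step is unavailable, so I bound the integrand by the Lipschitz constant $L_{\widetilde\sigma}$ times $|Z_u^{(\delta)}-Z_{\underline u}^{(\delta)}|(1+|W_u-W_{\underline u}|)$ and again appeal to Lemma \ref{LemExepSet} combined with Lemma \ref{MWwithInd}/Lemma \ref{LemProbEst} to obtain the same order. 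The jump term $C_t$ is straightforward: apply Lemma \ref{BDGaKunita} with $Z=N$ together with the Lipschitz property of $\widetilde\rho$ to bound it by a time integral of $\E[\sup_{r\le u}|e_r|^p]$. Assembling all pieces gives
\begin{equation}
\E\Big[\sup_{s\le t}|e_s|^p\Big] \le c\,\delta^p + c\int_0^t \E\Big[\sup_{r\le u}|e_r|^p\Big]\diff u,
\end{equation}
and Gr\"onwall's lemma finishes the proof.

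The hard part is the handling of the exception set $S$: on $S$ both the Taylor expansion of $\widetilde\mu$ and that of $\widetilde\sigma$ break down, and one has to rely entirely on Lemma \ref{LemExepSet} (whose sharp $\delta^2$ bound in turn rests on the doubly-adaptive step-size and the occupation-time estimate of Lemma \ref{EVofOTF}) to recover a full order $\delta$ in $L^p$. The interplay between the step-size function $h^\delta$, the probability estimates of Lemma \ref{LemProbEst}, and the final $L^p$ estimate is precisely what drives the improvement from the $3/4$ rate of \cite{PSS2024JMS} to the optimal rate $1$.
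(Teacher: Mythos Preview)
Your proof sketch is correct and follows essentially the same route as the paper: the same drift/diffusion/jump decomposition, the same add-and-subtract of the first-order Taylor term with the $S$/$S^c$ split, the use of Lemma~\ref{AuxLemma} for the $\widetilde\sigma\,d_{\widetilde\mu}(W_u-W_{\underline u})$ piece, Lemma~\ref{LemExepSet} on $S$, and the Gr\"onwall closure. One small imprecision: on $S$ in the diffusion term your bound $L_{\widetilde\sigma}|Z_u^{(\delta)}-Z_{\underline u}^{(\delta)}|(1+|W_u-W_{\underline u}|)$ is not a valid pointwise bound for the full integrand; the paper instead keeps the bracketed term $d_{\widetilde\sigma}(Z_{\underline u}^{(\delta)})\big[(Z_u^{(\delta)}-Z_{\underline u}^{(\delta)})-\widetilde\sigma(Z_{\underline u}^{(\delta)})(W_u-W_{\underline u})\big]$ globally and only splits the Taylor remainder $R_u$ on $S$/$S^c$, using $|R_u|\le 2L_{\widetilde\sigma}|Z_u^{(\delta)}-Z_{\underline u}^{(\delta)}|$ on $S$ --- which then feeds directly into Lemma~\ref{LemExepSet} without any extra $W$ factor.
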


\begin{proof}
By Hölder's inequality it is enough to prove the claim for $p\in\N\setminus \{1\}$.  
For all $\delta\in(0,\delta_0]$ and all $s\in[0,T]$,
\begin{equation}\label{M1}
    \begin{aligned}
    &\E\Big[\sup_{t\in[0,s]}|Z_t - Z_t^{(\delta)}|^p\Big]\\
    &\leq 3^{p-1} \Big(\E\Big[\sup_{t\in[0,s]} \Big|  \int_0^t \sum_{n=0}^{\infty} \big(\widetilde\mu(Z_u) - \widetilde\mu(Z^{(\delta)}_{\tau_n})\big) \mathds{1}_{(\tau_n,\tau_{n+1}]}(u) \diff u\Big|^p\Big]\\
    &\quad\quad\quad + \E\Big[\sup_{t\in[0,s]} \Big| \int_0^t \sum_{n=0}^{\infty} \Big(\widetilde\sigma(Z_u) -\widetilde\sigma(Z^{(\delta)}_{\tau_n}) - \int_{\tau_n}^u \widetilde\sigma (Z^{(\delta)}_{\tau_n}) d_{\widetilde\sigma} (Z^{(\delta)}_{\tau_n}) \diff W_v \Big)\mathds{1}_{(\tau_n,\tau_{n+1}]}(u) \diff W_u\Big|^p\Big]\\
    &\quad\quad\quad +\E\Big[\sup_{t\in[0,s]} \Big| \int_0^t \big(\widetilde\rho(Z_{u-}) - \widetilde\rho(Z^{(\delta)}_{u-})\big) \diff N_u\Big|^p\Big]\Big).
    \end{aligned}
\end{equation}
We estimate the first summand for all $\delta\in(0,\delta_0]$ and all $s\in[0,T]$ by
\begin{equation}\label{M2a}
    \begin{aligned}
    &\E\Big[\sup_{t\in[0,s]} \Big|  \int_0^t \sum_{n=0}^{\infty} \big(\widetilde\mu(Z_u) - \widetilde\mu(Z^{(\delta)}_{\tau_n})\big) \mathds{1}_{(\tau_n,\tau_{n+1}]}(u) \diff u\Big|^p\Big]\\
    &\leq 2^{p-1}\Big( \E\Big[\sup_{t\in[0,s]} \Big|  \int_0^t  \big(\widetilde\mu(Z_u) - \widetilde\mu(Z^{(\delta)}_{\underline u}) - \widetilde\sigma(Z^{(\delta)}_{\underline u}) d_{\widetilde\mu}(Z^{(\delta)}_{\underline u})(W_u-W_{\underline u})\big)  \diff u\Big|^p\Big] \\
    &\quad\quad\quad +\E\Big[\sup_{t\in[0,s]} \Big|  \int_0^t  \widetilde\sigma(Z^{(\delta)}_{\underline u}) d_{\widetilde\mu}(Z^{(\delta)}_{\underline u}) (W_u-W_{\underline u}) \diff u\Big|^p\Big]\Big) .
    \end{aligned}
\end{equation}
For all $u\in [0,T]$ and all $\delta\in(0,\delta_0]$ we obtain using Remark \ref{ConAss},
\begin{equation}\label{M3}
    \begin{aligned}
    &\big|\widetilde\mu(Z_u) - \widetilde\mu(Z^{(\delta)}_{\underline u}) - \widetilde\sigma(Z^{(\delta)}_{\underline u}) d_{\widetilde\mu}(Z^{(\delta)}_{\underline u})(W_u-W_{\underline u})\big|\\
    &\leq \big|\widetilde\mu ( Z_{u}) - \widetilde\mu (Z^{(\delta)}_{u})\big|
    + \big|\widetilde\mu (Z^{(\delta)}_{u}) - \widetilde\mu (Z^{(\delta)}_{\underline u}) -d_{\widetilde\mu} (Z^{(\delta)}_{\underline u})(Z^{(\delta)}_{u} - Z^{(\delta)}_{\underline u})\big|\mathds{1}_{S^c}(Z^{(\delta)}_{u},Z^{(\delta)}_{\underline u}) \\
    &\quad + \big|\widetilde\mu (Z^{(\delta)}_{u}) - \widetilde\mu (Z^{(\delta)}_{\underline u}) -d_{\widetilde\mu} (Z^{(\delta)}_{\underline u})(Z^{(\delta)}_{u} - Z^{(\delta)}_{\underline u})\big|\mathds{1}_{S}(Z^{(\delta)}_{u},Z^{(\delta)}_{\underline u})\\
    &\quad + \Big|d_{\widetilde\mu} (Z^{(\delta)}_{\underline u})\Big( \widetilde\mu(Z^{(\delta)}_{\underline u})(u-\underline u) +\frac{1}{2} \widetilde\sigma (Z^{(\delta)}_{\underline u})d_{\widetilde\sigma} (Z^{(\delta)}_{\underline u})\big((W_u-W_{\underline u})^2-(u-\underline u)\big)\Big)\Big|\\
    &\leq L_{\widetilde\mu} |Z_{u} - Z^{(\delta)}_{u}|
    + b_{\widetilde\mu } |Z^{(\delta)}_{u} - Z^{(\delta)}_{\underline u} |^2
    + 2L_{\widetilde\mu} |Z^{(\delta)}_{u} - Z^{(\delta)}_{\underline u}|\mathds{1}_{S}(Z^{(\delta)}_{u},Z^{(\delta)}_{\underline u})\\
    &\quad + L_{\widetilde\mu} \Big(c_{\widetilde\mu} + \frac{1}{2}L_{\widetilde\sigma} c_{\widetilde\sigma}\Big)(1+|Z^{(\delta)}_{\underline u}|) |u-\underline u| +\frac{1}{2}L_{\widetilde\mu}L_{\widetilde\sigma} c_{\widetilde\sigma}(1+|Z^{(\delta)}_{\underline u}|) |W_u-W_{\underline u}|^2.
    \end{aligned}
\end{equation}
Using this and Jensen's inequality, we get for the first summand of \eqref{M2a} that there exists a constant $\widetilde c_1\in(0,\infty)$ such that for all $\delta\in(0,\delta_0]$ and all $s\in[0,T]$,
\begin{equation}\label{M4}
    \begin{aligned}
    &\E\Big[\sup_{t\in[0,s]} \Big|  \int_0^t \sum_{n=0}^{\infty} \big(\widetilde\mu(Z_u) - \widetilde\mu(Z^{(\delta)}_{\tau_n}) - \widetilde\sigma(Z^{(\delta)}_{\tau_n}) d_{\widetilde\mu}(Z^{(\delta)}_{\tau_n})(W_u-W_{\tau_n})\big) \mathds{1}_{(\tau_n,\tau_{n+1}]}(u) \diff u\Big|^p\Big] \\ 
    &\leq \widetilde c_1\, \Big(\E\Big[  \int_0^s  |Z_{u} - Z^{(\delta)}_{u}|^p  \diff u\Big]
    + \E\Big[  \int_0^s  |Z^{(\delta)}_{u} - Z^{(\delta)}_{\underline u} |^{2p} \diff u\Big] \\
    &\quad\quad\quad +\E\Big[ \Big(  \int_0^s   |Z^{(\delta)}_{u} - Z^{(\delta)}_{\underline u}|\mathds{1}_{S}(Z^{(\delta)}_{u},Z^{(\delta)}_{\underline u}) \diff u\Big)^p\Big]
    + \E\Big[ \int_0^s  (1+|Z^{(\delta)}_{\underline u}|)^p |u-\underline u|^p \diff u \Big]\\
    &\quad\quad\quad+ \E\Big[  \int_0^s  (1+|Z^{(\delta)}_{\underline u}|)^p |W_u-W_{\underline u}|^{2p}  \diff u\Big] \Big).
    \end{aligned}
\end{equation}
For the first summands of \eqref{M4} we get for all $\delta\in(0,\delta_0]$ and all $s\in[0,T]$,
\begin{equation}\label{M5}
    \begin{aligned}
    &\E\Big[ \int_0^s  \big|Z_{u} - Z^{(\delta)}_{u}\big|^p \diff u \Big] 
    \leq \int_0^s \E\Big[ \sup_{v\in[0,u]} \big|Z_{v} - Z^{(\delta)}_{v}\big|^p\Big]  \diff u.
    \end{aligned}
\end{equation}
For the second summand of \eqref{M4} we use Lemma \ref{ME} to obtain for all $\delta\in(0,\delta_0]$ and all $s\in[0,T]$,
\begin{equation}\label{M6}
    \begin{aligned}
    &\E\Big[ \int_0^s  \big|Z^{(\delta)}_{u} - Z^{(\delta)}_{\underline u} \big|^{2p} \diff u\Big]
    \leq T c_2 \delta^{p}.
    \end{aligned}
\end{equation}
For the third summand of \eqref{M4} we use Lemma \ref{LemExepSet} to obtain for all $\delta\in(0,\delta_0]$ and all $s\in[0,T]$,
\begin{equation}\label{M7}
    \begin{aligned}
    &\E\Big[\Big( \int_0^s \big|Z^{(\delta)}_{u} - Z^{(\delta)}_{\underline u}\big|\mathds{1}_{S}(Z^{(\delta)}_{u},Z^{(\delta)}_{\underline u}) \diff u\Big)^{p}\Big] \leq c_{8}\, \delta^{p}.
    \end{aligned}
\end{equation}
For the fourth summand of \eqref{M4} we get by Lemma \ref{ME} for all $\delta\in(0,\delta_0]$ and all $s\in[0,T]$, 
\begin{equation}\label{M8}
    \begin{aligned}
    &\E\Big[ \int_0^s (1+|Z^{(\delta)}_{\underline u}|)^p |u-\underline u|^p \diff u\Big]
    \leq 2^{p-1} \delta^p  \int_0^s\Big(1 +\E\Big[ \sup_{v\in[0,T]} |Z^{(\delta)}_{v}|^p \Big]\Big)\diff u 
    \leq 2^{p-1} T (1 +c_1) \delta^{p}.
    \end{aligned}
\end{equation}
For the fifth summand of \eqref{M4} we use Lemma \ref{MW} and Lemma \ref{ME} to obtain for all $\delta\in(0,\delta_0]$ and all $s\in[0,T]$,
\begin{equation}\label{M9}
    \begin{aligned}
    &\E\Big[ \int_0^s (1+|Z^{(\delta)}_{\tau_n}|)^p |W_u-W_{\underline u}|^{2p} \diff u\Big]
    \leq 2^{p-1} c_{W_{2p}}\, \delta^p \int_0^s \Big(1+ \E\big[\sup_{v\in[0,T]}  |Z^{(\delta)}_{v}|^p\big]\Big)  \diff u\\
    &\leq 2^{p-1} c_{W_{2p}}\, T (1+ c_1 )\delta^{p} .
    \end{aligned}
\end{equation}
Plugging \eqref{M5}, \eqref{M6}, \eqref{M7}, \eqref{M8}, and \eqref{M9} into \eqref{M4} we obtain that there exists $\widetilde c_2\in(0,\infty)$ such that for all $\delta\in(0,\delta_0]$ and all $s\in[0,T]$,
\begin{equation}\label{M10}
    \begin{aligned}
    &\E\Big[\sup_{t\in[0,s]} \Big|  \int_0^t \big(\widetilde\mu(Z_u) - \widetilde\mu(Z^{(\delta)}_{\underline u}) - \widetilde\sigma (Z^{(\delta)}_{\underline u})d_{\widetilde\mu}(Z^{(\delta)}_{\underline u})(W_u-W_{\underline u})\big) \diff u\Big|^p\Big] \\
    &\leq \widetilde c_2\, \int_0^s \E\Big[ \sup_{v\in[0,u]} \big|Z_{v} - Z^{(\delta)}_{v}\big|^p\Big]  \diff u 
    + \widetilde c_2 \delta^{p}.
    \end{aligned}
\end{equation}
For the second summand of \eqref{M2a} we apply Lemma \ref{AuxLemma}. Hence we obtain that there exists a constant $\widetilde c_3 \in(0,\infty)$ such that for all $\delta\in(0,\delta_0]$ and all $s\in[0,T]$,
\begin{equation}\label{M2}
    \begin{aligned}
    &\E\Big[\sup_{t\in[0,s]} \Big|  \int_0^t \sum_{n=0}^{\infty} \big(\widetilde\mu(Z_u) - \widetilde\mu(Z^{(\delta)}_{\tau_n})\big) \mathds{1}_{(\tau_n,\tau_{n+1}]}(u) \diff u\Big|^p\Big]\\
    &\leq \widetilde c_3\, \int_0^s \E\Big[ \sup_{v\in[0,u]} \big|Z_{v} - Z^{(\delta)}_{v}\big|^p\Big]  \diff u 
    + \widetilde c_3 \delta^{p}.
    \end{aligned}
\end{equation}
For estimating the second summand of \eqref{M1}, observe that for all $u\in(\tau_n,\tau_{n+1}]$ and all $\delta\in(0,\delta_0]$ by Remark \ref{ConAss},
\begin{equation}\label{M52}
    \begin{aligned}
    &\Big|\widetilde\sigma(Z_u) -\widetilde\sigma(Z^{(\delta)}_{\tau_n}) - \int_{\tau_n}^u \widetilde\sigma(Z^{(\delta)}_{\tau_n}) d_{\widetilde\sigma} (Z^{(\delta)}_{\tau_n}) \diff W_v\Big|\\
    &\leq \big|\widetilde\sigma ( Z_{u}) - \widetilde\sigma (Z^{(\delta)}_{u})\big|
    + \big|\widetilde\sigma (Z^{(\delta)}_{u}) - \widetilde\sigma (Z^{(\delta)}_{\underline u}) -d_{\widetilde\sigma} (Z^{(\delta)}_{\underline u})(Z^{(\delta)}_{u} - Z^{(\delta)}_{\underline u})\big|\mathds{1}_{S^c}(Z^{(\delta)}_{u},Z^{(\delta)}_{\underline u}) \\
    &\quad + \big|\widetilde\sigma (Z^{(\delta)}_{u}) - \widetilde\sigma (Z^{(\delta)}_{\underline u}) -d_{\widetilde\sigma} (Z^{(\delta)}_{\underline u})(Z^{(\delta)}_{u} - Z^{(\delta)}_{\underline u})\big|\mathds{1}_{S}(Z^{(\delta)}_{u},Z^{(\delta)}_{\underline u})\\
    &\quad + \Big|d_{\widetilde\sigma} (Z^{(\delta)}_{\underline u})\Big( \widetilde\mu(Z^{(\delta)}_{\underline u})(u-\underline u) +\frac{1}{2} \widetilde\sigma (Z^{(\delta)}_{\underline u})d_{\widetilde\sigma} (Z^{(\delta)}_{\underline u})\big((W_u-W_{\underline u})^2-(u-\underline u)\big)\Big)\Big|\\
    &\leq L_{\widetilde\sigma} |Z_{u} - Z^{(\delta)}_{u}|
    + b_{\widetilde\sigma } |Z^{(\delta)}_{u} - Z^{(\delta)}_{\underline u} |^2 
    + 2L_{\widetilde\sigma} |Z^{(\delta)}_{u} - Z^{(\delta)}_{\underline u}|\mathds{1}_{S}(Z^{(\delta)}_{u},Z^{(\delta)}_{\underline u})\\
    &\quad + L_{\widetilde\sigma} \Big(c_{\widetilde\mu} + \frac{1}{2}L_{\widetilde\sigma}^2 c_{\widetilde\sigma}\Big)(1+|Z^{(\delta)}_{\underline u}|) |u-\underline u| +\frac{1}{2}\|d_{\widetilde\sigma}\|^2_\infty c_{\widetilde\sigma}(1+|Z^{(\delta)}_{\underline u}|) |W_u-W_{\underline u}|^2.
    \end{aligned}
\end{equation}
Using this, the Burkholder-Davis-Gundy inequality, and Jensen's inequality we obtain for the second summand of \eqref{M1} that there exist constants $\widetilde c_4, \widetilde c_{5} \in (0,\infty)$ such that for all $\delta\in(0,\delta_0]$ and all $s\in[0,T]$,
\begin{equation}\label{M53}
    \begin{aligned}
    &\E\Big[\sup_{t\in[0,s]} \Big| \int_0^t \sum_{n=0}^{\infty} \Big(\widetilde\sigma(Z_u) -\widetilde\sigma(Z^{(\delta)}_{\tau_n}) - \int_{\tau_n}^u \widetilde\sigma(Z^{(\delta)}_{\tau_n}) d_{\widetilde\sigma} (Z^{(\delta)}_{\tau_n}) \diff W_v \Big)\mathds{1}_{(\tau_n,\tau_{n+1}]}(u) \diff W_u\Big|^p\Big]\\
    &\leq \widetilde c_4 \, \E\Big[\Big( \int_0^s \sum_{n=0}^{\infty} \Big|\widetilde\sigma(Z_u) -\widetilde\sigma(Z^{(\delta)}_{\tau_n}) - \int_{\tau_n}^u \widetilde\sigma(Z^{(\delta)}_{\tau_n}) d_{\widetilde\sigma} (Z^{(\delta)}_{\tau_n}) \diff W_v \Big|^2\mathds{1}_{(\tau_n,\tau_{n+1}]}(u) \diff u\Big)^{\frac{p}{2}}\Big]\\
    &\leq \widetilde c_{5}\, \Big( \E\Big[ \int_0^s  \big|Z_{u} - Z^{(\delta)}_{u}\big|^p \diff u \Big]
    + \E\Big[ \int_0^s  \big|Z^{(\delta)}_{u} - Z^{(\delta)}_{\underline u} \big|^{2p} \diff u\Big] \\
    &\quad\quad\quad +\E\Big[\Big( \int_0^s \big|Z^{(\delta)}_{u} - Z^{(\delta)}_{\underline u}\big|^2\mathds{1}_{S}(Z^{(\delta)}_{u},Z^{(\delta)}_{\underline u}) \diff u\Big)^{\frac{p}{2}}\Big]
    + \E\Big[ \int_0^s (1+|Z^{(\delta)}_{\underline u}|)^p |u-\underline u|^p \diff u\Big]\\
    &\quad\quad\quad +\E\Big[ \int_0^s  (1+|Z^{(\delta)}_{\underline u}|)^p |W_u-W_{\underline u}|^{2p}\Big) \diff u\Big]\Big).
    \end{aligned}
\end{equation}
Combining \eqref{M53}, \eqref{M5}, \eqref{M6}, \eqref{M7}, \eqref{M8}, and \eqref{M9} we obtain that there exists a constant $\widetilde c_{6}\in(0,\infty)$ such that for all $\delta\in(0,\delta_0]$ and all $s\in[0,T]$,
\begin{equation}\label{M59}
    \begin{aligned}
    &\E\Big[\sup_{t\in[0,s]} \Big| \int_0^t \sum_{n=0}^{\infty} \Big(\widetilde\sigma(Z_u) -\widetilde\sigma(Z^{(\delta)}_{\tau_n}) - \int_{\tau_n}^u \widetilde\sigma d_{\widetilde\sigma}(Z^{(\delta)}_{\tau_n}) (Z^{(\delta)}_{\tau_n}) \diff W_v \Big)\mathds{1}_{(\tau_n,\tau_{n+1}]}(u) \diff W_u\Big|^p\Big]\\
    &\leq \widetilde c_{6}\, \int_0^s \E\Big[ \sup_{v\in[0,u]} \big|Z_{v} - Z^{(\delta)}_{v}\big|^p\Big]  \diff u 
    + \widetilde c_{6}\,  \delta^{p}.\\
    \end{aligned}
\end{equation}
For the third summand of \eqref{M1}, Lemma \ref{BDGaKunita} ensures for all $\delta\in(0,\delta_0]$ and all $s\in[0,T]$,
\begin{equation}\label{M60}
    \begin{aligned}
    &\E\Big[\sup_{t\in[0,s]} \Big| \int_0^t \big(\widetilde\rho(Z_{u-}) -\widetilde\rho(Z^{(\delta)}_{u-}) \big) \diff N_u\Big|^p\Big]
    \leq \hat c\, L_{\widetilde\rho}^p  \int_0^s \E\Big[ \sup_{v\in[0,u]} \big| Z_{v} - Z^{(\delta)}_{v} \big|^p\Big] \diff u.
    \end{aligned}
\end{equation}
Plugging \eqref{M2}, \eqref{M59}, and \eqref{M60} into  \eqref{M1} we obtain that there exists a constant $\widetilde c_{7}\in(0, \infty)$ such that for all $\delta\in(0,\delta_0]$ and all $s\in[0,T]$,
\begin{equation}\label{M61}
    \begin{aligned}
    &\E\Big[\sup_{t\in[0,s]}|Z_t - Z_t^{(\delta)}|^p\Big]
    \leq \widetilde c_{7}\, \Big(\int_0^s \E\Big[ \sup_{v\in[0,u]} \big| Z_{v} - Z^{(\delta)}_{v} \big|^p\Big] \diff u + \delta^{p}\Big). 
    \end{aligned}
\end{equation}
Note that $\E\Big[\sup_{v\in[0,T]} \big| Z_{v} - Z^{(\delta)}_{v} \big|^p\Big]<\infty$ and that $s\mapsto \E\Big[\sup_{v\in[0,s]} \big| Z_{v} - Z^{(\delta)}_{v} \big|^p\Big]$ is a Borel measurable mapping, because it is monotonically increasing. Hence we apply Gronwall's inequality and obtain that there exists a constant $c_{10}\in(0,\infty)$ such that for all $\delta\in(0,\delta_0]$,
\begin{equation}\label{M62}
    \begin{aligned}
    &\E\Big[\sup_{t\in[0,T]}|Z_t - Z_t^{(\delta)}|^p\Big]
    \leq c_{10}  \delta^{p}.
    \end{aligned}
\end{equation}
This proves the claim.
\end{proof}

\subsection{Cost Analysis}

In this section we estimate the computational cost of our numerical algorithm $Z^{(\delta)}$ on the interval $[0,T]$. For this we study the number of arithmetic operations, function evaluations, and simulations of random variables. Note that this is proportional to the number of steps that are needed to reach time $T$. Hence we estimate the value
\begin{equation}
    n(Z^{(\delta)}_T) = \min\{k\in\N : \tau_k = T\}
\end{equation}
to determine the dependence of the computational cost on $\delta$. The same quantity has, for example, been studied in \cite{NSS19, Yaroslavtseva2022}. The result and proof of the following theorem are closely based on \cite[Section 5.5]{Yaroslavtseva2022}.

\begin{theorem}\label{thm:cost}
    Let Assumption \ref{assZ} hold. Then there exists a constant $c_{11}\in(0,\infty)$ such that for all $\delta\in[0,\delta_0)$
    \begin{equation}
        \E[n(Z^{(\delta)}_T)]\leq c_{11}(\delta^{-1}+\E[N_T]).
    \end{equation}
\end{theorem}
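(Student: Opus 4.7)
My plan is first to reduce to counting ``full'' steps and then to estimate these via the occupation time bounds of Lemma \ref{EVofOTF}. Writing $\mathcal R=\{n\in\N_0 : \tau_{n+1}-\tau_n = h^\delta(Z^{(\delta)}_{\tau_n})\}$, the complement $\mathcal R^c$ collects the steps truncated by either a Poisson jump time $\nu_i\leq T$ or by $T$ itself, so $|\mathcal R^c|\leq N_T+1$ and hence $n(Z^{(\delta)}_T)\leq |\mathcal R|+N_T+1$. It thus suffices to show $\E[|\mathcal R|]\leq c/\delta$. Partition $\mathcal R=\mathcal R_0\cup\mathcal R_1\cup\mathcal R_2$ according to whether $Z^{(\delta)}_{\tau_n}$ lies in $(\Theta^{\varepsilon_1^\delta})^c$, in $\Theta^{\varepsilon_1^\delta}\setminus\Theta^{\varepsilon_2^\delta}$, or in $\Theta^{\varepsilon_2^\delta}$; these match the three functional forms of $h^\delta$.

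The estimate for $\mathcal R_0$ is immediate: each full step there has length $\delta$, so $|\mathcal R_0|\cdot\delta\leq T$ gives $|\mathcal R_0|\leq T/\delta$. For $\mathcal R_2$, each step has length $\delta^2\log^4(\delta^{-1})$, hence $|\mathcal R_2|\cdot\delta^2\log^4(\delta^{-1})\leq \int_0^T\mathds{1}_{\Theta^{\varepsilon_2^\delta}}(Z^{(\delta)}_{\underline u})\diff u$. I would estimate the right-hand side as in \eqref{ExepSeteq47}: decompose $\{Z^{(\delta)}_{\underline u}\in\Theta^{\varepsilon_2^\delta}\}\subseteq\{Z^{(\delta)}_u\in\Theta^{2\varepsilon_2^\delta}\}\cup\{|Z^{(\delta)}_u-Z^{(\delta)}_{\underline u}|\geq\varepsilon_2^\delta,\,Z^{(\delta)}_{\underline u}\in\Theta^{\varepsilon_2^\delta}\}$ and apply Lemma \ref{EVofOTF} (with $f\equiv 1$ and some $\gamma\in(0,1/2)$) together with Lemma \ref{LemProbEst}\ref{ProbEsti} (with $\alpha=1$ and $q$ large). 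The result is of order $\varepsilon_2^\delta=\delta\log^4(\delta^{-1})$, which after dividing by $\delta^2\log^4(\delta^{-1})$ yields $\E[|\mathcal R_2|]\lesssim 1/\delta$.

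The main obstacle is the middle region $\mathcal R_1$, where step sizes vary continuously between $\delta^2\log^4(\delta^{-1})$ and $\delta$. Here a single use of Lemma \ref{EVofOTF} with $f(x)=1/\max(x,\varepsilon_2^\delta)^2$ fails, because the remainder term $\sup f\cdot((\varepsilon_1^\delta)^{3/2-\gamma}+\delta^{3/2-\gamma})$ blows up faster than $1/\delta$. I would instead perform a dyadic decomposition: for $j=0,1,\ldots,J$ with $J=\lceil\log_2(\varepsilon_1^\delta/\varepsilon_2^\delta)\rceil=O(\log(\delta^{-1}))$, set $A_j=\{x\in\R:d(x,\Theta)\in[2^j\varepsilon_2^\delta,\,2^{j+1}\varepsilon_2^\delta)\}$ so that $\Theta^{\varepsilon_1^\delta}\setminus\Theta^{\varepsilon_2^\delta}\subseteq\bigcup_{j=0}^J A_j$, and observe that on each $A_j$ the step size is at least $(2^j\varepsilon_2^\delta)^2/\log^4(\delta^{-1})$, so
\[
|\mathcal R_1\cap A_j|\leq \frac{\log^4(\delta^{-1})}{(2^j\varepsilon_2^\delta)^2}\int_0^T\mathds{1}_{A_j}(Z^{(\delta)}_{\underline u})\diff u.
\]
For each $j$ I would transfer from $\underline u$ to $u$ via $\{Z^{(\delta)}_{\underline u}\in A_j\}\subseteq\{Z^{(\delta)}_u\in\Theta^{2^{j+2}\varepsilon_2^\delta}\}\cup\{|Z^{(\delta)}_u-Z^{(\delta)}_{\underline u}|\geq 2^{j+1}\varepsilon_2^\delta,\,Z^{(\delta)}_{\underline u}\in A_j\}$, apply Lemma \ref{EVofOTF} to the first piece and Lemma \ref{LemProbEst}\ref{ProbEstii} to the second (using $\alpha=1$, since on $A_j$ the threshold $2^{j+1}\varepsilon_2^\delta$ already dominates $d(Z^{(\delta)}_{\underline u},\Theta)$, and $q$ large). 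Summing over $j$, the leading contribution is the convergent geometric series $\sum_j 4c_5\log^4(\delta^{-1})/(2^j\varepsilon_2^\delta)\leq 8c_5/\delta$, while the remaining error terms scale like $\delta^{-1/2-\gamma}\log^k(\delta^{-1})$ or $\delta^{q-2}$, both of which are $O(1/\delta)$ provided $\gamma<1/2$ and $q\geq 2$. The principal bookkeeping difficulty is to verify that every term of this dyadic sum indeed collapses to $O(1/\delta)$ up to the harmless logarithmic factors; combined with the $\mathcal R_0$ and $\mathcal R_2$ bounds, this yields $\E[|\mathcal R|]\leq c/\delta$ and hence the theorem.
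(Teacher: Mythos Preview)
Your proof is correct, and the overall architecture matches the paper's: separate the truncated steps (bounded by $N_T+1$), then split the full steps by the three regions of $h^\delta$, and handle the outer and innermost regions exactly as you describe (the paper does the same, citing \eqref{ExepSeteq47} for the $\Theta^{\varepsilon_2^\delta}$ part).

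The genuine difference is in the middle region $\Theta^{\varepsilon_1^\delta}\setminus\Theta^{\varepsilon_2^\delta}$. The paper does \emph{not} use a dyadic decomposition. Instead it introduces the event $D_t^\delta=\{|Z^{(\delta)}_t-Z^{(\delta)}_{\underline t}|\leq\tfrac12 d(Z^{(\delta)}_{\underline t},\Theta)\}$; on $(D_t^\delta)^c$ it applies Lemma~\ref{LemProbEst}\ref{ProbEstii} directly, while on $D_t^\delta$ it transfers from $\underline t$ to $t$ (distances are comparable) and then invokes Lemma~\ref{EVofOTF} with the \emph{weighted} function $f=(\max\{\cdot,a\})^{-2}$, split once more at a single intermediate scale $\bar\varepsilon^\delta=\delta^{3/4}\log^3(\delta^{-1})$ with two different choices of $\gamma$. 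This yields the $O(\delta^{-1})$ bound after two applications of the lemma rather than $O(\log\delta^{-1})$ applications. Your dyadic approach is more elementary in that it only uses Lemma~\ref{EVofOTF} with $f\equiv 1$ and requires no tuning of an intermediate scale; the price is the geometric sum over shells, which you correctly show collapses. One minor technicality you should tidy up: for the largest shells $A_J$, the radius $2^{J+2}\varepsilon_2^\delta$ may slightly exceed $\varepsilon_0$ (it is bounded by $8\varepsilon_1^\delta\le 4\varepsilon_0$), so either shrink $\delta_0$ or cap the argument of Lemma~\ref{EVofOTF} at $\varepsilon_0$; this does not affect the final bound.
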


\begin{proof}
    Note that for all $\delta\in[0,\delta_0)$ and all $k\in\N$
    \begin{equation}
        \int_{\tau_k}^{\tau_{k+1}} \frac{1}{\tau_{k+1}-\tau_k + \mathds{1}_{\{\tau_{k+1}=\tau_k\}}} \mathds{1}_{\{\tau_{k+1}\neq\tau_k\}} \diff t = \mathds{1}_{\{\tau_{k+1}\neq\tau_k\}}.
    \end{equation}
    Hence for all $\delta\in[0,\delta_0)$,
    \begin{equation}
        \begin{aligned}
            &n(Z^{(\delta)}_T) = \sum_{k\in\N} \int_{\tau_k}^{\tau_{k+1}} \frac{1}{\tau_{k+1}-\tau_k+ \mathds{1}_{\{\tau_{k+1}=\tau_k\}}} \mathds{1}_{\{\tau_{k+1}\neq\tau_k\}} \diff t\\
            &= \sum_{k\in\N} \int_{\tau_k}^{\tau_{k+1}} \frac{1}{\tau_{k+1}-\tau_k+ \mathds{1}_{\{\tau_{k+1}=\tau_k\}}} \mathds{1}_{\{\tau_{k+1}\neq\tau_k\}} \mathds{1}_{\{N_{\tau_{k+1}}-N_{\tau_k} = 1\}}\diff t\\
            &\quad + \sum_{k\in\N} \int_{\tau_k}^{\tau_{k+1}} \frac{1}{\tau_{k+1}-\tau_k+ \mathds{1}_{\{\tau_{k+1}=\tau_k\}}} \mathds{1}_{\{\tau_{k+1}\neq\tau_k\}} \mathds{1}_{\{N_{\tau_{k+1}}-N_{\tau_k} = 0\}}\diff t\\
            &= \sum_{k\in\N} \mathds{1}_{\{N_{\tau_{k+1}}-N_{\tau_k} = 1\}} \int_{\tau_k}^{\tau_{k+1}} \frac{1}{\tau_{k+1}-\tau_k} \diff t\\
            &\quad + \sum_{k\in\N} \int_{\tau_k}^{\tau_{k+1}} \frac{1}{h^\delta(Z^{(\delta)}_{\tau_k})} \mathds{1}_{\{\tau_{k+1}\neq\tau_k\}} \mathds{1}_{\{N_{\tau_{k+1}}-N_{\tau_k} = 0\}}\diff t.
        \end{aligned}
    \end{equation}
    This implies for all $\delta\in[0,\delta_0)$,
    \begin{equation}\label{eq:Cost1}
        \begin{aligned}
            &n(Z^{(\delta)}_T) \leq  \sum_{k\in\N} \mathds{1}_{\{N_{\tau_{k+1}}-N_{\tau_k} = 1\}} 
            + \sum_{k\in\N} \int_{\tau_k}^{\tau_{k+1}} \frac{1}{h^\delta(Z^{(\delta)}_{\tau_k})} \mathds{1}_{\{\tau_{k+1}\neq\tau_k\}} \diff t\\
            &\leq N_T + \int_0^T  \frac{1}{h^\delta(Z^{(\delta)}_{\underline t})}  \diff t\\
            &= N_T + \int_0^T  \frac{1}{h^\delta(Z^{(\delta)}_{\underline t})}  \mathds{1}_{(\Theta^{\varepsilon_1^\delta})^c}(Z^{(\delta)}_{\underline t}) \diff t  \\
            &\quad + \int_0^T  \frac{1}{h^\delta(Z^{(\delta)}_{\underline t})} \mathds{1}_{\Theta^{\varepsilon_1^\delta}\setminus\Theta^{\varepsilon_2^\delta} }(Z^{(\delta)}_{\underline t}) \diff t  
            + \int_0^T  \frac{1}{h^\delta(Z^{(\delta)}_{\underline t})}  \mathds{1}_{\Theta^{\varepsilon_2^\delta} }(Z^{(\delta)}_{\underline t}) \diff t.
        \end{aligned}
    \end{equation}
    Now we estimate the expectation of each summand separately. We calculate for all $\delta\in[0,\delta_0)$,
    \begin{equation}\label{eq:Cost2}
        \begin{aligned}
            &\E\Big[\int_0^T  \frac{1}{h^\delta(Z^{(\delta)}_{\underline t})}  \mathds{1}_{(\Theta^{\varepsilon_1^\delta})^c}(Z^{(\delta)}_{\underline t}) \diff t \Big] 
            = \delta^{-1} \int_0^T \P(Z^{(\delta)}_{\underline t} \in (\Theta^{\varepsilon_1^\delta})^c)\diff t \leq T \delta^{-1}. \\
        \end{aligned}
    \end{equation}    
    Further, using similar calculations to \eqref{ExepSeteq47} we get that there exists a constant $\widetilde c_1\in(0,\infty)$ such that for all $\delta\in[0,\delta_0)$,
    \begin{equation}\label{eq:Cost3}
        \begin{aligned}
            &\E\Big[\int_0^T  \frac{1}{h^\delta(Z^{(\delta)}_{\underline t})}  \mathds{1}_{\Theta^{\varepsilon_2^\delta} }(Z^{(\delta)}_{\underline t}) \diff t \Big]
            = \delta^{-2} \log^{-4}(\delta^{-1})\int_0^T \P(Z^{(\delta)}_{\underline t} \in \Theta^{\varepsilon_2^\delta})\diff t \leq \widetilde c_1 \delta^{-1}. \\
        \end{aligned}
    \end{equation} 
    It remains to prove that there exists a constant $\widetilde c_2\in(0,\infty)$ such that for all $\delta\in[0,\delta_0)$, 
    \begin{equation}\label{eq:Cost4}
        \begin{aligned}
            &\E\Big[ \int_0^T  \frac{1}{h^\delta(Z^{(\delta)}_{\underline t})} \mathds{1}_{\Theta^{\varepsilon_1^\delta}\setminus\Theta^{\varepsilon_2^\delta} }(Z^{(\delta)}_{\underline t}) \diff t\Big] 
            \leq \widetilde  c_2 \delta^{-1},
        \end{aligned}
    \end{equation}  
    then \eqref{eq:Cost1}, \eqref{eq:Cost2}, \eqref{eq:Cost3}, and \eqref{eq:Cost4} imply the claim.
    To prove \eqref{eq:Cost4} we define for all $\delta\in[0,\delta_0)$ and all $t\in[0,T]$ the set 
    \begin{equation}\label{eq:Cost5}
        D_t^{\delta} = \Big\{| Z^{(\delta)}_t- Z^{(\delta)}_{\underline t}| \leq \frac{1}{2} d(Z^{(\delta)}_{\underline t},\Theta) \Big\}
    \end{equation}
    and use that for all $\delta\in(0,\delta_0]$,
    \begin{equation}\label{eq:Cost6}
        \begin{aligned}
            &\E\Big[ \int_0^T  \frac{1}{h^\delta(Z^{(\delta)}_{\underline t})} \mathds{1}_{\Theta^{\varepsilon_1^\delta}\setminus\Theta^{\varepsilon_2^\delta} }(Z^{(\delta)}_{\underline t}) \diff t\Big] \\
            &= \E\Big[ \int_0^T  \frac{1}{h^\delta(Z^{(\delta)}_{\underline t})} \mathds{1}_{\Theta^{\varepsilon_1^\delta}\setminus\Theta^{\varepsilon_2^\delta} }(Z^{(\delta)}_{\underline t})\mathds{1}_{D_t^{\delta}} \diff t\Big]
            + \E\Big[ \int_0^T  \frac{1}{h^\delta(Z^{(\delta)}_{\underline t})} \mathds{1}_{\Theta^{\varepsilon_1^\delta}\setminus\Theta^{\varepsilon_2^\delta} }(Z^{(\delta)}_{\underline t}) \mathds{1}_{(D_t^{\delta})^c}\diff t\Big].
        \end{aligned}
    \end{equation}  
    Since the distance function $d(\cdot, \Theta)\colon\R\to [0, \infty)$ is Lipschitz continuous with Lipschitz constant 1, we get for all $\delta\in(0,\delta_0]$ and all $t\in[0,T]$,
    \begin{equation}\label{eq:Cost7}
        \begin{aligned}
            &\{Z^{(\delta)}_{\underline t}\in \Theta^{\varepsilon_1^\delta}\setminus\Theta^{\varepsilon_2^\delta}\} \cap D_t^\delta \subset \{ Z^{(\delta)}_{t} \in \Theta^{\frac{3}{2}\varepsilon_1^\delta}\setminus\Theta^{\frac{1}{2}\varepsilon_2^\delta}\} \cap \Big\{ \frac{1}{2}d(Z^{(\delta)}_{\underline t}, \Theta)\leq d(Z^{(\delta)}_{t}, \Theta)\leq \frac{3}{2}d(Z^{(\delta)}_{\underline t}, \Theta) \Big\}.
        \end{aligned}
    \end{equation}     
    Hence, for all $\delta\in(0,\delta_0]$,
    \begin{equation}\label{eq:Cost8}
        \begin{aligned}
            &\E\Big[ \int_0^T  \frac{1}{h^\delta(Z^{(\delta)}_{\underline t})} \mathds{1}_{\Theta^{\varepsilon_1^\delta}\setminus\Theta^{\varepsilon_2^\delta} }(Z^{(\delta)}_{\underline t})\mathds{1}_{D_t^{\delta}} \diff t\Big]\\
            &= \log^4(\delta^{-1}) \E\Big[\int_0^T \frac{1}{d(Z^{(\delta)}_{\underline t}, \Theta)^2}\mathds{1}_{\Theta^{\varepsilon_1^\delta}\setminus\Theta^{\varepsilon_2^\delta} }(Z^{(\delta)}_{\underline t})\mathds{1}_{D_t^{\delta}} \diff t\Big]\\
             &\leq \frac{9}{4}  \log^4(\delta^{-1}) \E\Big[\int_0^T\frac{1}{d(Z^{(\delta)}_{t}, \Theta)^2}\mathds{1}_{ \Theta^{\frac{3}{2}\varepsilon_1^{\delta}}\setminus \Theta^{\frac{1}{2}\varepsilon_2^{\delta}}}(Z^{(\delta)}_{t})\,dt\Bigr].
        \end{aligned}
    \end{equation}  
    Now we define $\bar \varepsilon^\delta = \delta^{3/4}\log^{3}(\delta^{-1})$ and note that $\varepsilon_2^{\delta}\leq \overline\varepsilon^\delta\leq \varepsilon_1^{\delta}$ for all $\delta\in(0, \delta_0]$.
    This implies for all $\delta\in(0,\delta_0]$,
    \begin{equation}\label{eq:Cost9}
        \begin{aligned}
            &\E\Big[ \int_0^T  \frac{1}{h^\delta(Z^{(\delta)}_{\underline t})} \mathds{1}_{\Theta^{\varepsilon_1^\delta}\setminus\Theta^{\varepsilon_2^\delta} }(Z^{(\delta)}_{\underline t})\mathds{1}_{D_t^{\delta}} \diff t\Big]\\
             &\leq \frac{9}{4}  \log^4(\delta^{-1}) \Big(\E\Big[\int_0^T\frac{1}{d(Z^{(\delta)}_{t}, \Theta)^2}\mathds{1}_{ \Theta^{\frac{3}{2}\varepsilon_1^{\delta}}\setminus \Theta^{\bar \varepsilon^{\delta}}}(Z^{(\delta)}_{t})\,dt\Bigr]
             + \E\Big[\int_0^T\frac{1}{d(Z^{(\delta)}_{t}, \Theta)^2}\mathds{1}_{ \Theta^{\bar \varepsilon^{\delta}} \setminus \Theta^{\frac{1}{2}\varepsilon_2^{\delta}}}(Z^{(\delta)}_{t})\,dt\Bigr]\Big)\\
             &\leq \frac{9}{4}  \log^4(\delta^{-1}) \Big( \E\Big[\int_0^T\frac{1}{(\max\{ \bar \varepsilon^{\delta},d(Z^{(\delta)}_{t}, \Theta)\})^2}\mathds{1}_{ \Theta^{\frac{3}{2}\varepsilon_1^{\delta}}}(Z^{(\delta)}_{t})\,dt\Bigr]\\
             &\quad\quad\quad\quad \quad\quad\quad +  \E\Big[\int_0^T\frac{1}{(\max\{ \frac{1}{2}\varepsilon_2^{\delta} ,d(Z^{(\delta)}_{t}, \Theta)\})^2} \mathds{1}_{ \Theta^{\bar \varepsilon^{\delta}}}(Z^{(\delta)}_{t})\,dt\Bigr]\Big).
        \end{aligned}
    \end{equation}  
    Applying Lemma \ref{EVofOTF} choosing $f=(\max\{\overline\varepsilon^\delta,\cdot\})^{-2}$ and $\gamma=1/2$ resp.~$f=(\max\{\tfrac{1}{2}\varepsilon_2^{\delta},\cdot\})^{-2}$ and $\gamma=1/6$ we obtain that there exist constants $\widetilde c_3, \widetilde c_4 \in (0,\infty)$ such that for all $\delta\in(0,\delta_0]$,
    \begin{equation}\label{eq:Cost10}
        \begin{aligned}
            &\E\Big[ \int_0^T  \frac{1}{h^\delta(Z^{(\delta)}_{\underline t})} \mathds{1}_{\Theta^{\varepsilon_1^\delta}\setminus\Theta^{\varepsilon_2^\delta} }(Z^{(\delta)}_{\underline t})\mathds{1}_{D_t^{\delta}} \diff t\Big]\\
            &\leq \frac{9}{4} \log^4(\delta^{-1}) \Big( c_5 \Big(\int_0^{\frac{3}{2}\varepsilon_1^{\delta}} (\max\{\overline\varepsilon^\delta,x\})^{-2} \diff x + \sup_{x\in[0,\frac{3}{2}\varepsilon_1^{\delta} ]} (\max\{\overline\varepsilon^\delta,x\})^{-2} \Big(\Big(\frac{3}{2}\varepsilon_1^{\delta}\Big)^{\frac{3}{2}-\frac{1}{2}} + \delta^{\frac{3}{2}- \frac{1}{2}}\Big) \Big)\\
            &\quad\quad\quad\quad \quad\quad\quad +   c_5 \Big(\int_0^{\overline\varepsilon^\delta} (\max\{\tfrac{1}{2}\varepsilon_2^{\delta},x\})^{-2} \diff x + \sup_{x\in[0,\overline\varepsilon^\delta ]} (\max\{\tfrac{1}{2}\varepsilon_2^{\delta},x\})^{-2} \Big((\overline\varepsilon^\delta)^{\frac{3}{2}-\frac{1}{6}} + \delta^{\frac{3}{2}- \frac{1}{6}}\Big) \Big) \Big)\\
            &\leq \widetilde c_3 \log^4(\delta^{-1}) \Big(\varepsilon_1^{\delta}(\overline\varepsilon^\delta)^{-2} + (\overline\varepsilon^\delta)^{-2}(  \varepsilon_1^{\delta} +\delta) + \big( 2(\tfrac{1}{2} \varepsilon_2^{\delta})^{-1} - (\overline\varepsilon^\delta)^{-1} \big)  + (\varepsilon_2^{\delta})^{-2}( (\overline\varepsilon^\delta)^\frac{4}{3} + \delta^\frac{4}{3})\Big)\\
            &= \widetilde c_3 \big( \delta^{-1} + \delta^{-1} + \delta^{-\frac{1}{2}}\log^{-2}(\delta^{-1}) + 4 \delta^{-1}\log^{-4}(\delta^{-1}) - \delta^{-\frac{3}{4}} \log^{-3}(\delta^{-1}) + \delta^{-1} + \delta^{-\frac{2}{3}}\log^{-8}(\delta^{-1})\big)\\ 
            &\leq \widetilde c_4 \delta^{-1}.
        \end{aligned}
    \end{equation} 
    Next we use that $\delta^2\log^{4}(\delta^{-1}) \leq h^{\delta}$ and Lemma \ref{LemProbEst} \ref{ProbEstii} with $\alpha=1/2$ and $q=2$ to obtain that there exists a constant $\widetilde c_5 \in(0,\infty)$ such that for all $\delta\in(0,\delta_0]$,
    \begin{equation}\label{eq:Cost11}
        \begin{aligned}
            &\E\Big[ \int_0^T  \frac{1}{h^\delta(Z^{(\delta)}_{\underline t})} \mathds{1}_{\Theta^{\varepsilon_1^\delta}\setminus\Theta^{\varepsilon_2^\delta} }(Z^{(\delta)}_{\underline t}) \mathds{1}_{(D_t^{\delta})^c}\diff t\Big]\\
            &\leq \delta^{-2} \log^{-4}(\delta^{-1}) \int_0^T \P\Big( | Z^{(\delta)}_t- Z^{(\delta)}_{\underline t}| \geq \frac{1}{2} d(Z^{(\delta)}_{\underline t},\Theta), Z^{(\delta)}_{\underline t} \in\Theta^{\varepsilon_1^\delta}\setminus\Theta^{\varepsilon_2^\delta} \Big) \diff t\\
            &\leq c_6 \log^{-4}(\delta^{-1}) \leq \widetilde c_5 \delta^{-1}.
        \end{aligned}
    \end{equation} 
    Plugging \eqref{eq:Cost10} and \eqref{eq:Cost11} in \eqref{eq:Cost6} proves \eqref{eq:Cost4}.
\end{proof}

\section{Convergence of the transformation-based doubly-adaptive quasi-Milstein scheme}\label{ConvDAqMS}

In this section we are going to use the results developed in Section \ref{SecQMS} for the SDE \eqref{eq:SDE} satisfying Assumption \eqref{assX}. For this we use the transformation originally introduced in \cite{LS17b}, which is commonly used to work with SDEs with piecewise Lipschitz continuous drift coefficient and transforms the drift coefficient locally around the potential discontinuities, in a slightly modified way as in \cite{muellergronbach2019b}. We shortly recall the definition of the transformation and its properties for the convenience of the reader. 

The transformation function $G\colon\R\to\R$ is for all $x\in\R$ defined by 
\begin{align}\label{eq:G-1d}
G(x)=x+ \sum_{i=1}^k \alpha_i\, \phi\!\left(\frac{x-\zeta_i}{\nu}\right)(x-\zeta_i)|x-\zeta_i|, 
\end{align}
where 
\begin{align}\label{eq:bump}
\phi(u)=
\begin{cases}
(1-u^2)^4 & \text{if } |u|\leq 1,\\
0 & \text{otherwise},
\end{cases}
\end{align}
\begin{align*}
\nu\in\Big(0,\min\bigg\{\min\limits_{1\le i\le m}\frac{1}{8|\alpha_i|},\min\limits_{1\le i\le m-1}\frac{\zeta_{i+1}-\zeta_i}{2}\bigg\} \Big),
\end{align*}
using the conventions that $\min \emptyset =\infty$ and $\frac{1}{0} = \infty$ and 
\begin{equation}\label{alpha}
\begin{aligned}
\alpha_i = \frac{\mu(\zeta_i -)- \mu(\zeta_i +)}{2\sigma^2(\zeta_i)}
\end{aligned}
\end{equation}
for all $i\in\{1,\ldots,m\}$.

\begin{remark}
    By \cite[Lemma 1]{muellergronbach2019b} $G$ is Lipschitz continuous, differentiable, and invertible. Its inverse $G^{-1}$ is Lipschitz continuous. Its derivative $G'$ is bounded, positive, bounded away from 0 and twice continuously differentiable in $(\zeta_{i-1},\zeta_i)$ for all $i\in\{1,\dots,m+1\}$. Further for the second derivative $G''$ for all $i\in\{1,\dots,m\}$ the one-sided limits  $G''(\zeta_i-) $ and $G''(\zeta_i+)$ exist and satisfy $ G''(\zeta_i-) = -2\alpha_i$ resp.~$G''(\zeta_i+) = 2\alpha_i$.
\end{remark}

Hence the function $G''\colon\bigcup_{i=1}^m (\zeta_{i-1},\zeta_i) \to \R$ is well defined and can be extended to $G''\colon\R\to\R$ by
\begin{equation}\label{exG2}
G''(\zeta_i) = 2\alpha_i + 2 \frac{\mu(\zeta_i+)-\mu(\zeta_i)}{\sigma^2(\zeta_i)}, \quad i\in\{1,\ldots,m\}.
\end{equation}
for all $i\in\{1,\ldots ,m\}$.

Next define the process $Z\colon[0,T]\times\Omega\to \R$ by $Z = G(X)$. Then by \cite[Theorem 3.1]{PS20} $Z$ satisfies 
\begin{align}
\diff Z_t &=\widetilde \mu(Z_t)  \diff t + \widetilde \sigma(Z_t) \diff W_t+\widetilde \rho(Z_{t-})\diff N_t , \quad t\in[0,T], \quad Z_0=\widetilde \xi,
\end{align}
where 
\begin{equation}\label{eqTCoeff}
\begin{aligned}
&\widetilde \mu = (G'\cdot \mu +\frac{1}{2} G''\cdot\sigma^2) \circ G^{-1}, \quad \widetilde \sigma = (G'\cdot \sigma) \circ G^{-1}, \quad \widetilde \rho = G(G^{-1} +\rho(G^{-1})) - \operatorname{id}.
\end{aligned}
\end{equation}

\begin{remark}
    If $\mu$, $\sigma$, and $\rho$ satisfy Assumption \ref{assX}, then $\widetilde \mu$, $\widetilde\sigma$, and $\widetilde\rho$ from \eqref{eqTCoeff} satisfy Assumption \ref{assZ} and the process $Z=G(X)$ is the unique solution of SDE \eqref{eq:TSDE} by \cite[Lemma 4.2 and Lemma 4.3]{PSS2024JMS}.
\end{remark}

With this knowledge we are able to prove convergence rate $1$ of the transformation-based doubly adaptive quasi-Milstein scheme similar to \cite[Theorem 4.4]{PSS2024JMS}.

\begin{theorem}\label{MainWeakAss}
Let Assumption \ref{assX} hold. Let $p\in[1,\infty)$, $G$ be the transformation defined in \eqref{eq:G-1d}, let $Z\colon[0,T]\times\Omega\to \R$ be defined by $Z = G(X)$, and let for all $\delta\in(0,\delta_0]$, $Z^{(\delta)}$ be the doubly-adaptive quasi-Milstein scheme for $Z$ as introduced in \eqref{SumAppr}.
Then there exist constants $c_{12}, c_{13} \in(0,\infty)$ such that for all $\delta\in(0,\delta_0]$,
\begin{equation}
    \begin{aligned}
    \E\Big[ \sup_{t\in[0,T]} |X_t- G^{-1}(Z^{(\delta)}_t) |^p\Big]^\frac{1}{p} \leq c_{12} \delta;
    \end{aligned}
\end{equation}
\begin{equation}
    \begin{aligned}
        \E\Big[n\big(G^{-1}(Z^{(\delta)}_T)\big)\Big]\leq c_{13}(\delta^{-1}+\E[N_T]).
    \end{aligned}
\end{equation}
\end{theorem}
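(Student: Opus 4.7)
The plan is to reduce the statement to Theorem \ref{ConvResTSDE} and Theorem \ref{thm:cost} applied to the transformed SDE \eqref{eq:TSDE} with coefficients $\widetilde\mu,\widetilde\sigma,\widetilde\rho$ given by \eqref{eqTCoeff}. By the remark preceding the theorem, these coefficients satisfy Assumption \ref{assZ}, the process $Z=G(X)$ is the unique strong solution of \eqref{eq:TSDE}, and $G^{-1}$ is globally Lipschitz continuous. Hence for every $t\in[0,T]$,
\begin{equation}
|X_t-G^{-1}(Z^{(\delta)}_t)|=|G^{-1}(Z_t)-G^{-1}(Z^{(\delta)}_t)|\leq L_{G^{-1}}\,|Z_t-Z^{(\delta)}_t|,
\end{equation}
and taking the supremum over $t\in[0,T]$ and the $L^p$-norm reduces the convergence statement to an application of Theorem \ref{ConvResTSDE} to $Z$ and its doubly-adaptive quasi-Milstein approximation $Z^{(\delta)}$, yielding $c_{12}=L_{G^{-1}}\,c_{10}$.

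For the cost bound I would note that $G^{-1}(Z^{(\delta)})$ and $Z^{(\delta)}$ share the same random time grid $(\tau_n)_{n\in\N}$, since applying $G^{-1}$ is a pointwise post-processing that does not create or remove grid points. Consequently $n(G^{-1}(Z^{(\delta)}_T))=n(Z^{(\delta)}_T)$, and Theorem \ref{thm:cost} immediately gives $c_{13}=c_{11}$.

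The only subtle point is to verify that the coefficients in \eqref{eqTCoeff} indeed satisfy Assumption \ref{assZ}; but the excerpt already cites \cite[Lemma~4.2 and Lemma~4.3]{PSS2024JMS}, which cover this verification, so no further work is needed. There is no genuine obstacle beyond transcribing these two invocations and checking that the Lipschitz constant of $G^{-1}$ enters as a multiplicative factor in $c_{12}$; the plan therefore consists essentially of a one-line reduction for each of the two displayed estimates.
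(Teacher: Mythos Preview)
Your proposal is correct and matches the paper's proof essentially line for line: the paper also writes $X_t=G^{-1}(Z_t)$, applies the Lipschitz continuity of $G^{-1}$ to reduce to Theorem~\ref{ConvResTSDE}, and for the cost bound observes $\E[n(G^{-1}(Z^{(\delta)}_T))]=\E[n(Z^{(\delta)}_T)]$ before invoking Theorem~\ref{thm:cost}. The constants you identify, $c_{12}=L_{G^{-1}}c_{10}$ and $c_{13}=c_{11}$, are exactly those in the paper.
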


\begin{proof}
As $G$ is invertible, its inverse is Lipschitz continuous and $Z$ satisfies Assumption \ref{assZ} we obtain using Theorem \ref{ConvResTSDE} that there exists a constant $c_{12}\in(0,\infty)$ such that
\begin{equation}
    \begin{aligned}
    &\E\Big[ \sup_{t\in[0,T]} |X_t- G^{-1}(Z^{(\delta)}_t) |^p\Big]^\frac{1}{p}
    = \E\Big[ \sup_{t\in[0,T]} |G^{-1}(Z_t) - G^{-1}(Z^{(\delta)}_t) |^p\Big]^\frac{1}{p} \\
    &\leq L_{G^{-1}} \E\Big[ \sup_{t\in[0,T]} |Z_t - Z^{(\delta)}_t|^p\Big]^\frac{1}{p} 
    \leq L_{G^{-1}} c_{10} \delta 
    \leq c_{12}\, \delta^1.
    \end{aligned}
\end{equation}
For the estimation of the cost we note that $\E[n(G^{-1}(Z^{(\delta)}_T))] = \E[n(Z^{(\delta)}_T)]$ and apply Theorem \ref{thm:cost}.
\end{proof}

\begin{remark}
    The cost $n(G^{-1}(Z^{(\delta)}_T))$ of the transformation-based doubly adaptive quasi-Milstein scheme 
    is the same as for the doubly adaptive quasi-Milstein scheme, 
    but due to the additional applications of the transformation function $G$, its derivatives and its inverse the computational effort in every timestep increases.  Moreover, the inverse of the transformation can in general be not calculated explicitly. In this case additional numerical approximations of the inverse are needed in every step. In this sense the scheme is an implicit but still implementable scheme. 
\end{remark}

\section*{Acknowledgements}
The author would like to thank Michaela Szölgyenyi for her support and the stimulating discussions on the topic of this article.

\appendix

\section{Additional proofs}\label{appendix}

\begin{proof}[Proof of Lemma \ref{MW}]
By \eqref{EqTauGen} we get
\begin{equation}
    \begin{aligned}\label{MW2}
        \tau_{n+1} = & \big(\tau_n+ h^\delta(Z^{(\delta)}_{\tau_n})\big)\wedge \big(\tau_n + \inf\{s\geq 0 : N_{\tau_n+s}-N_{\tau_n}\geq 0\} \big) \wedge T,
    \end{aligned}
\end{equation}
which implies that $\tau_{n+1}$ can be expressed as a measurable function $f$ of $\tau_n$, $Z^{(\delta)}_{\tau_n}$ and $(N_{\tau_n+s} -N_{\tau_n})_{s\geq 0}$. Using this we obtain for all $n\in\N$, all $k\in\N$, all $\delta\in(0,\delta_0]$ and all $t\in[0,T]$,
\begin{equation}
    \begin{aligned}\label{MW3}
    &\E\big[\big(1+ \big|Z^{(\delta)}_{\tau_n}\big|^p\big)   |W_t-W_{\tau_n}|^q \mathds{1}_{[\tau_n,\tau_{n+1})}(t)  \mathds{1}_{\{N_t = k\}} \big]\\
    &= \E\big[\big(1+ \big|Z^{(\delta)}_{\tau_n}\big|^p\big)   |W_u-W_{\tau_n}|^q \mathds{1}_{[\tau_n,\tau_{n+1})}(t)  \mathds{1}_{\{N_{\tau_n} = k\}} \big]\\
    &\leq \E\Big[\big(1+ \big|Z^{(\delta)}_{\tau_n}\big|^p\big)  \sup_{s\in[0,\delta]} |W_{s+\tau_n}-W_{\tau_n}|^q \mathds{1}_{\{\tau_n \leq t\}} \mathds{1}_{\{f(\tau_n,Z^{(\delta)}_{\tau_n}, (N_{s+\tau_n}-N_{\tau_n})_{s\geq 0})> t\}} \mathds{1}_{\{N_{\tau_n} = k\}} \Big].
    \end{aligned}
\end{equation}
Applying \cite[p.~33]{Grikhman2004} we get for all $n\in\N$, all $k\in\N$, all $\delta\in(0,\delta_0]$ and all $t\in[0,T]$,
\begin{equation}
    \begin{aligned}\label{MW4}
    &\E\big[\big(1+ \big|Z^{(\delta)}_{\tau_n}\big|^p\big)   |W_t-W_{\tau_n}|^q \mathds{1}_{[\tau_n,\tau_{n+1})}(t)  \mathds{1}_{\{N_t = k\}} \big]\\
    &\leq \E\Big[\E\Big[\big(1+ \big|Z^{(\delta)}_{\tau_n}\big|^p\big)  \sup_{s\in[0,\delta]} |W_{s+\tau_n}-W_{\tau_n}|^q \mathds{1}_{\{\tau_n \leq t\}} \mathds{1}_{\{f(\tau_n, Z^{(\delta)}_{\tau_n}, (N_{s+\tau_n}-N_{\tau_n})_{s\geq 0})> t\}} \mathds{1}_{\{N_{\tau_n} = k\}} \Big| \F_{\tau_n} \Big] \Big] \\
    &= \E\Big[\big(1+ \big|Z^{(\delta)}_{\tau_n}\big|^p\big) \mathds{1}_{\{\tau_n \leq t\}} \mathds{1}_{\{N_{\tau_n} = k\}} \E\Big[ \sup_{s\in[0,\delta]} |W_{s+\tau_n}-W_{\tau_n}|^q  \mathds{1}_{\{f(\tau_n, Z^{(\delta)}_{\tau_n}, (N_{s+\tau_n}-N_{\tau_n})_{s\geq 0})> t\}}  \Big| \F_{\tau_n} \Big] \Big] \\
    &= \E\Big[\big(1+ \big|Z^{(\delta)}_{\tau_n}\big|^p\big) \mathds{1}_{\{\tau_n \leq t\}} \mathds{1}_{\{N_{\tau_n} = k\}} \\
    &\quad\quad\quad \cdot\E\Big[ \sup_{s\in[0,\delta]} |W_{s+\tau_n}-W_{\tau_n}|^q  \mathds{1}_{\{f(z,\widetilde z, (N_{s+\tau_n}-N_{\tau_n})_{s\geq 0})> t\}}  \Big| \F_{\tau_n} \Big]\Big|_{z =\tau_n, \widetilde z =Z^{(\delta)}_{\tau_n} } \Big].
    \end{aligned}
\end{equation}
By Lemma \ref{PropDiscGrid}, $(W_{\tau_n+s}- W_{\tau_n})_{s\geq 0}$ and $(N_{\tau_n+s}- N_{\tau_n})_{s\geq 0}$ are independent of $\F_{\tau_n}$ as well as $(W_{\tau_n+s}- W_{\tau_n})_{s\geq 0}$ is independent of $(N_{\tau_n+s}- N_{\tau_n})_{s\geq 0}$. This, Lemma \ref{MEW}, and \cite[p.~33]{Grikhman2004} imply for all $n\in\N$, all $k\in\N$, all $\delta\in(0,\delta_0]$ and all $t\in[0,T]$,
\begin{equation}
    \begin{aligned}\label{MW4a}
    &\E\big[\big(1+ \big|Z^{(\delta)}_{\tau_n}\big|^p\big)   |W_t-W_{\tau_n}|^q \mathds{1}_{[\tau_n,\tau_{n+1})}(t)  \mathds{1}_{\{N_{\tau_n} = k\}} \big]\\
    &\leq c_{W_q} \delta^\frac{q}{2} \E\Big[\big(1+ \big|Z^{(\delta)}_{\tau_n}\big|^p\big) \mathds{1}_{\{\tau_n \leq t\}} \mathds{1}_{\{N_{\tau_n} = k\}} \E\Big[ \mathds{1}_{\{f(z,\widetilde z, (N_{s+\tau_n}-N_{\tau_n})_{s\geq0})> t\}} \Big]\Big|_{z =\tau_n, \widetilde z =Z^{(\delta)}_{\tau_n}} \Big]\\
    &= c_{W_q} \delta^\frac{q}{2} \E\Big[\big(1+ \big|Z^{(\delta)}_{\tau_n}\big|^p\big) \mathds{1}_{\{\tau_n \leq t\}}  \mathds{1}_{\{f(\tau_n,Z^{(\delta)}_{\tau_n}, (N_{s+\tau_n}-N_{\tau_n})_{s\geq0})> t\}} \mathds{1}_{\{N_{\tau_n} = k\}} \Big]\\
    &= c_{W_q} \delta^\frac{q}{2} \E\Big[\big(1+ \big|Z^{(\delta)}_{\tau_n}\big|^p\big) \mathds{1}_{[\tau_n, \tau_{n+1})}(t)  \mathds{1}_{\{N_{\tau_n} = k\}} \Big].
    \end{aligned}
\end{equation}
Further, for all $\delta\in(0,\delta_0]$ and all $t\in[0,T]$,
\begin{equation}
    \begin{aligned}\label{MW5}
    &\E\Big[ \big(1+ \big|Z^{(\delta)}_{\underline{t}}\big|^p\big)   |W_t-W_{\underline{t}}|^q \Big]
    =\E\Big[\sum_{n=0}^\infty \big(1+ \big|Z^{(\delta)}_{\tau_n}\big|^p\big)   |W_t-W_{\tau_n}|^q \mathds{1}_{[\tau_n,\tau_{n+1})}(t) \Big]\\
    &= \sum_{k=0}^\infty \sum_{n=0}^{\infty} \E\Big[ \big(1+ \big|Z^{(\delta)}_{\tau_n}\big|^p\big)   |W_t-W_{\tau_n}|^q \mathds{1}_{[\tau_n,\tau_{n+1})}(t) \mathds{1}_{\{N_{\tau_n}=k\}} \Big]\\
    &\leq \sum_{k=0}^\infty \sum_{n=0}^{\infty} c_{W_q} \delta^\frac{q}{2} \E\Big[\big(1+ \big|Z^{(\delta)}_{\tau_n}\big|^p\big) \mathds{1}_{[\tau_n,\tau_{n+1})}(t)  \mathds{1}_{\{N_{\tau_n} = k\}} \Big]\\
    &= c_{W_q} \delta^\frac{q}{2} \E\Big[\sum_{n=0}^{\infty} \big(1+ \big|Z^{(\delta)}_{\tau_n}\big|^p\big) \mathds{1}_{[\tau_n,\tau_{n+1})}(t) \Big]
    = c_{W_q} \delta^\frac{q}{2} \E\Big[1+ \big|Z^{(\delta)}_{\underline t}\big|^p \Big].
    \end{aligned}
\end{equation}
\end{proof}

\begin{proof}[Proof of Lemma \ref{FiniteMom}]
By Hölder's inequality it is enough to prove the claim for $p\in[2,\infty) \cap \N$. 
For the first summand it is enough to consider instead for all $\delta\in(0,\delta_0]$,
\begin{equation}
    \begin{aligned}\label{FM3}
    &\int_0^T \E \big[\big|Z^{(\delta)}_{\underline t}\big|^p\big] \diff t 
    = \int_0^T \E \Big[\sum_{n=0}^\infty \big|Z^{(\delta)}_{\tau_n}  \big|^p \mathds{1}_{[\tau_n,\tau_{n+1})}(t)\Big] \diff t \\
    &\leq T \E\Big[ \sum_{n=0}^{N_T + M}  \big|Z^{(\delta)}_{\tau_{n}}\big|^p\Big] 
    = T \sum_{k=0}^{\infty} \sum_{n=0}^{k+M} \E\Big[  \big|Z^{(\delta)}_{\tau_{n}}\big|^p \mathds{1}_{\{N_T =k\}}\Big].
    \end{aligned}
\end{equation}
By \eqref{EqDefMS} and the fact that $N_{\tau_n}- N_{\tau_{n-1}} \in\{0,1\}$ we get that for all $n$, $k\in\N$ there exists a constant $\widetilde c_1 \in (0,\infty)$ such that  for all $\delta\in(0,\delta_0]$,
\begin{equation}
    \begin{aligned}\label{FM4}
    &\E\Big[\big|Z^{(\delta)}_{\tau_{n}}\big|^p\mathds{1}_{\{N_T = k\}}\Big]
    \leq 2^{p-1}\Big( \E\Big[\big|Z^{(\delta)}_{\tau_{n}-}\big|^p\mathds{1}_{\{N_T = k\}}\Big]  + \E\Big[\big|\widetilde \rho(Z^{(\delta)}_{\tau_{n}-})\big|^p \big|N_{\tau_n} - N_{\tau_{n-1}}\big|^p\mathds{1}_{\{N_T = k\}}\Big] \Big)\\
    &\leq \widetilde c_1\, \P( N_T =k) + \widetilde c_1\, \E\Big[\big|Z^{(\delta)}_{\tau_{n}-}\big|^p\mathds{1}_{\{N_T = k\}}\Big].
    \end{aligned}
\end{equation}
Further by Lemma \ref{PropDiscGrid} and Lemma \ref{MEW} we obtain for all $q\in\N$, all $n\in\N$, all $k\in\N$ and all $\delta \in (0,\delta_0]$,
\begin{equation}
    \begin{aligned}\label{FM6}
    &\E\Big[ \big( 1+ \big|Z^{(\delta)}_{\tau_{n-1}}\big|^p\big) \big|W_{\tau_n} - W_{\tau_{n-1}}\big|^q \mathds{1}_{\{N_T =k\}}\Big]\\ 
    &\leq \E\Big[ \big( 1+ \big|Z^{(\delta)}_{\tau_{n-1}}\big|^p\big) \mathds{1}_{\{N_T =k\}} \E\Big[   \sup_{s\in[0,\delta]} \big|W_{s+\tau_{n-1}} - W_{\tau_{n-1}}\big|^q \Big| \widetilde \F_{\tau_{n-1}}\Big] \Big]\\
    &\leq c_{W_q} \delta^\frac{q}{2} \Big(\P(N_T = k) +  \E\Big[ \big|Z^{(\delta)}_{\tau_{n-1}}\big|^p  \mathds{1}_{\{N_T = k \}}  \Big]\Big).
    \end{aligned}
\end{equation}
By \eqref{EqDefMSLL} and \eqref{FM6} there exists constants $\widetilde c_2, \widetilde c_3, \widetilde c_4 \in(0,\infty)$ such that for all $n\in\N$, all $k\in\N$ and all $\delta \in (0,\delta_0]$,
\begin{equation}
    \begin{aligned}\label{FM5}
    &\E\Big[\big|Z^{(\delta)}_{\tau_{n}-}\big|^p\mathds{1}_{\{N_T = k\}} \Big]\\
    &\leq \widetilde c_2 \Big(  \E\Big[\big|Z^{(\delta)}_{\tau_{n-1}}\big|^p\mathds{1}_{\{N_T = k\}} \Big] + \E\Big[\big|\widetilde\mu(Z^{(\delta)}_{\tau_{n-1}}) (\tau_n- \tau_{n-1})\big|^p\mathds{1}_{\{N_T =k\}} \Big] \\
    &\quad\quad\quad +\E\Big[\big|\widetilde\sigma(Z^{(\delta)}_{\tau_{n-1}}) (W_{\tau_n} - W_{\tau_{n-1}})\big|^p\mathds{1}_{\{N_T = k\}} \Big]\\ 
    &\quad\quad\quad +\E\Big[\Big| \frac{1}{2} \widetilde\sigma(Z^{(\delta)}_{\tau_{n-1}}) d_{\widetilde\sigma} (Z^{(\delta)}_{\tau_{n-1}}) \big((W_{\tau_n} - W_{\tau_{n-1}})^2 - (\tau_n- \tau_{n-1})\big)\Big|^p\mathds{1}_{\{N_T = k\}} \Big]\Big)\\
    &\leq \widetilde c_3 \Big(  \E\Big[\big|Z^{(\delta)}_{\tau_{n-1}}\big|^p\mathds{1}_{\{N_T =k\}} \Big] + \E\Big[\big( 1+ \big|Z^{(\delta)}_{\tau_{n-1}}\big|^p\big) \big|\tau_n- \tau_{n-1}\big|^p\mathds{1}_{\{N_T =k\}}\Big] \\
    &\quad\quad\quad + \E\Big[ \big( 1+ \big|Z^{(\delta)}_{\tau_{n-1}}\big|^p\big)
    \big|W_{\tau_n} - W_{\tau_{n-1}}\big|^p\mathds{1}_{\{N_T =k\}} \Big]\\ 
    &\quad\quad\quad + \E\Big[ \big( 1+ \big|Z^{(\delta)}_{\tau_{n-1}}\big|^p\big) |W_{\tau_n} - W_{\tau_{n-1}}|^{2p} \mathds{1}_{\{N_T =k\}}\Big]\Big)\\
    &\leq \widetilde c_4 \Big( \P(N_T =k) + \E\Big[\big|Z^{(\delta)}_{\tau_{n-1}}\big|^p\mathds{1}_{\{N_T =k\}}\Big]\Big).
    \end{aligned}
\end{equation}
Together with \eqref{FM4} we obtain that there exist constants $\widetilde c_5,\widetilde c_6 \in(1,\infty)$ such that for all $n\in\N$, all $k\in\N$ and all $\delta \in (0,\delta_0]$,
\begin{align}
    &\E\Big[\big|Z^{(\delta)}_{\tau_{n}}\big|^p\mathds{1}_{\{N_T =k\}}\Big]
    \leq \widetilde c_5\Big(\P(N_T =k)+ \E\Big[\big|Z^{(\delta)}_{\tau_{n-1}}\big|^p\mathds{1}_{\{N_T =k\}}\Big]\Big); \label{FM7}\\
    &\E\Big[\big|Z^{(\delta)}_{\tau_{n}-}\big|^p\mathds{1}_{\{N_T =k\}}\Big]
    \leq \widetilde c_6\Big( \P(N_T =k)+  \E\Big[\big|Z^{(\delta)}_{\tau_{n-1}-}\big|^p\mathds{1}_{\{N_T =k\}}\Big]\Big).\label{FM8}
\end{align}
Using \eqref{FM7} recursively we get that there exist constants $\widetilde c_7, \widetilde c_8 \in(0,\infty)$ such that for all $n\in\N$, all $k\in\N$ and all $\delta \in (0,\delta_0]$,
\begin{equation}
    \begin{aligned}\label{FM9}
    &\E\Big[\big|Z^{(\delta)}_{\tau_{n}}\big|^p\mathds{1}_{\{N_T =k\}}\Big]
    \leq \sum_{j=0}^{n-1} \widetilde c_5^{\,j+1}\, \P(N_T =k) + \widetilde c_5^{\,n}\, \E\Big[ |\widetilde \xi| \mathds{1}_{\{N_T =k\}} \Big]\\
    &\leq \widetilde c_7 (\widetilde c_5^{\,n+1}-1) \, \P(N_T =k) + |\xi| \widetilde c_5^{n+1} \, \P(N_T =k) \leq \widetilde c_8 \,\widetilde c_5^{\,n+1} \, \P(N_T =k).
    \end{aligned}
\end{equation}
Analogously we obtain using recusively \eqref{FM8}, and \eqref{FM6} that there exists a constant $\widetilde c_9\in(0,\infty)$ such that for all $n\in\N$, all $k\in\N$ and all $\delta \in (0,\delta_0]$,
\begin{equation}
    \begin{aligned}\label{FM9a}
    &\E\Big[\big|Z^{(\delta)}_{\tau_{n}-}\big|^p\mathds{1}_{\{N_T =k\}}\Big]
    \leq \sum_{j=0}^{n-2} \widetilde c_6^{\,j+1}\, \P( N_T =k) + \widetilde c_6^{\,n-1}\, \E\Big[\big|Z^{(\delta)}_{\tau_{1}-}\big|^p\mathds{1}_{\{N_T =k\}}\Big]\\
    &\leq \sum_{j=0}^{n-2} \widetilde c_6^{\,j+1}\, \P(N_T =k) + \widetilde c_6^{\,n-1} \widetilde c_4\Big(\P(N_T = k) + \E\Big[\big|\widetilde\xi \big|^p\mathds{1}_{\{N_T =k\}}\Big]\Big)
    \leq \widetilde c_9 \,\widetilde c_6^{\,n} \, \P(N_T =k).
    \end{aligned}
\end{equation}
Combining \eqref{FM3} and \eqref{FM9} and using that $\widetilde c_5 \in(1,\infty)$ we obtain for all $n\in\N$, all $k\in\N$ and all $\delta \in (0,\delta_0]$,
\begin{equation}
    \begin{aligned}\label{FM10}
    & \int_0^T \E \big[\big|Z^{(\delta)}_{\underline t}\big|^p\big] \diff t 
    \leq T \, \widetilde c_8\sum_{k=0}^{\infty} \sum_{n=0}^{k+M}  \widetilde c_5^{\,n+1} \P(N_T = k) 
    = T \, \widetilde c_8  \sum_{k=0}^{\infty} \P(N_T = k)\, \widetilde c_5 \, \frac{1- \widetilde c_5^{\,k+M+1}}{1- \widetilde c_5}\\
    &\leq \frac{T \, \widetilde c_8\,  \widetilde c_5^{\,M+2}}{ \widetilde c_5 -1 }   \sum_{k=0}^{\infty} \P(N_T = k)\, \widetilde c_5^{\,k}
    = \frac{T \, \widetilde c_8\,  \widetilde c_5^{\,M+2}}{ \widetilde c_5 -1 } \exp(\lambda T(\widetilde c_5-1)).
    \end{aligned}
\end{equation}
For the second summand we use that $N_T$ has finite $p$-th moments and calculate similar to \eqref{FM10} using the fact that $\widetilde c_6\in(1,\infty)$, we obtain for all $n\in\N$, all $k\in\N$ and all $\delta \in (0,\delta_0]$,
\begin{equation}
    \begin{aligned}\label{FM12}
    &\E\Big[ N_T^{p-1} \sum_{n=0}^{N_T + M} \big|Z^{(\delta)}_{\tau_{n}-}\big|^p \Big] 
    = \sum_{k=0}^\infty\sum_{n=0}^{k + M} k^{p-1} \E\Big[ \big|Z^{(\delta)}_{\tau_{n}-}\big|^p \mathds{1}_{\{N_T = k\}} \Big] 
    \leq \sum_{k=0}^\infty\sum_{n=0}^{k + M} k^{p-1} \widetilde c_9 \, \widetilde c_6^{\,n} \, \P(N_T =k) \\
    &= \widetilde c_9 \sum_{k=0}^\infty k^{p-1}  \P(N_T =k) \frac{1- \widetilde c_6^{\,k+M+1}}{1-\widetilde c_6}
    \leq \frac{\widetilde c_9\, \widetilde c_6^{\,M+1}}{\widetilde c_6-1}  \sum_{k=0}^\infty k^{p-1}  \P(N_T =k) \widetilde c_6^{\,k}\\
    &= \frac{\widetilde c_9\, \widetilde c_6^{\,M+1}}{\widetilde c_6-1}  \sum_{k=0}^\infty k^{p-1} \frac{(\lambda T \widetilde c_6)^k}{k!} e^{-\lambda T \widetilde c_6} e^{\lambda T(\widetilde c_6 -1)}
    = \frac{\widetilde c_{10} \widetilde c_9 \widetilde c_6^{\,M+1}}{\widetilde c_6-1} \exp(\lambda T(\widetilde c_6-1)),
    \end{aligned}
\end{equation}
where $\widetilde c_{10}$ is the $(p-1)$-th moment of a Poisson distributed random variable with parameter $\widetilde c_6 \lambda T$.
\end{proof}

\begin{proof}[Proof of Lemma \ref{ME}]
By Hölder's inequality it is enough to prove the claim for $p\in[2,\infty)\cap\N$.
Recalling equation \eqref{SumAppr} we obtain for all $s\in[0,T]$ and all $\delta\in(0,\delta_0]$,
\begin{equation}
    \begin{aligned}\label{ME1}
    &\E\Big[\sup_{t\in[0,s]} |Z^{(\delta)}_{t}|^p\Big]  \\
    & \leq 4^{p-1} \Big(  |\widetilde\xi|^p
    + \E\Big[\sup_{t\in[0,s]}\Big|\int_0^t \sum_{n=0}^{\infty}  \widetilde \mu\big(Z^{(\delta)}_{\tau_{n}}\big)\mathds{1}_{(\tau_n,\tau_{n+1}]}(u) \diff u\Big|^p\Big] \\
    &\quad\quad\quad\quad+ \E\Big[\sup_{t\in[0,s]}\Big|\int_0^t \sum_{n=0}^{\infty} \Big(\widetilde \sigma\big(Z^{(\delta)}_{\tau_{n}}\big) + \widetilde\sigma \big(Z^{(\delta)}_{\tau_n}\big) d_{\widetilde\sigma} \big(Z^{(\delta)}_{\tau_n}\big) (W_{u}-W_{\tau_n}) \Big)\mathds{1}_{(\tau_n,\tau_{n+1}]}(u) \diff W_u\Big|^p\Big] \\
    &\quad\quad\quad\quad+ \E\Big[\sup_{t\in[0,s]}\Big|\int_0^t\rho\big(Z^{(\delta)}_{u-}\big) \diff N_u\Big|^p\Big] \Big).
\end{aligned}
\end{equation}
Now we estimate each summand separately. For the second one we apply Jensen's inequality, and Lemma \ref{FiniteMom} to obtain for all $s\in[0,T]$ and all $\delta\in(0,\delta_0]$,
\begin{equation}
    \begin{aligned}\label{ME2}
    &\E\Big[\sup_{t\in[0,s]}\Big|\int_0^t \sum_{n=0}^{\infty} \widetilde \mu\big(Z^{(\delta)}_{\tau_{n}}\big)\mathds{1}_{(\tau_n,\tau_{n+1}]}(u) \diff u\Big|^p\Big] 
    \leq \E\Big[\sup_{t\in[0,s]} T^{p-1} \int_0^t  \sum_{n=0}^{\infty} \big|\widetilde \mu\big(Z^{(\delta)}_{\tau_{n}}\big)\big|^p  \mathds{1}_{(\tau_n,\tau_{n+1}]}(u) \diff u\Big] \\
    &\leq 2^{p-1}\, T^{p-1}\, c_{\widetilde\mu}^p\, \E\Big[\int_0^s  1 + |Z^{(\delta)}_{\underline u}|^p \diff u\Big] \leq c_\delta.
\end{aligned}
\end{equation}
For the third summand we apply Lemma \ref{BDGaKunita} to show that there exists $\hat c\in(0,\infty)$ such that for all $s\in[0,T]$ and all $\delta\in(0,\delta_0]$,
\begin{equation}
    \begin{aligned}\label{ME13}
    &\E\Big[\sup_{t\in[0,s]}\Big|\int_0^t \sum_{n=0}^{\infty} \Big(\widetilde \sigma\big(Z^{(\delta)}_{\tau_{n}}\big) + \widetilde\sigma \big(Z^{(\delta)}_{\tau_n}\big) d_{\widetilde\sigma} \big(Z^{(\delta)}_{\tau_n}\big) (W_{u}-W_{\tau_n}) \Big)\mathds{1}_{(\tau_n,\tau_{n+1}]}(u) \diff W_u\Big|^p\Big]\\
    &\leq \hat c\, \E\Big[\int_0^s \sum_{n=0}^{\infty} \big|\widetilde \sigma\big(Z^{(\delta)}_{\tau_{n}}\big) + \widetilde\sigma \big(Z^{(\delta)}_{\tau_n}\big) d_{\widetilde\sigma} \big(Z^{(\delta)}_{\tau_n}\big) (W_{u}-W_{\tau_n}) \big|^p\mathds{1}_{(\tau_n,\tau_{n+1}]}(u) \diff u \Big]\\
    &\leq 2^{p-1} \hat c\, \Big( \E\Big[\int_0^s \sum_{n=0}^{\infty} \big|\widetilde \sigma\big(Z^{(\delta)}_{\tau_{n}}\big) \Big|^p \mathds{1}_{(\tau_n,\tau_{n+1}]}(u) \diff u \big] \\
    &\quad\quad\quad\quad + \E\Big[\int_0^s \sum_{n=0}^{\infty} \big|\widetilde\sigma \big(Z^{(\delta)}_{\tau_n}\big) d_{\widetilde\sigma} \big(Z^{(\delta)}_{\tau_n}\big) (W_{u}-W_{\tau_n}) \big|^p \mathds{1}_{(\tau_n,\tau_{n+1}]}(u) \diff u \Big]\Big).
\end{aligned}
\end{equation}
Using Lemma \ref{MW} and Lemma \ref{FiniteMom} we obtain that there exist $c_\delta, c_{W_p}\in(0,\infty)$ such that for all $s\in[0,T]$ and all $\delta\in(0,\delta_0]$,
\begin{equation}
    \begin{aligned}\label{ME16}
    &\E\Big[\sup_{t\in[0,s]}\Big|\int_0^t \sum_{n=0}^{\infty} \Big(\widetilde \sigma\big(Z^{(\delta)}_{\tau_{n}}\big) + \widetilde\sigma \big(Z^{(\delta)}_{\tau_n}\big) d_{\widetilde\sigma} \big(Z^{(\delta)}_{\tau_n}\big) (W_{u}-W_{\tau_n}) \Big)\mathds{1}_{(\tau_n,\tau_{n+1}]}(u) \diff W_u\Big|^p\Big]\\
    &\leq 2^{p-1} \hat c\, \Big(  2^{p-1} c_{\widetilde \sigma}^p\, \int_0^s \E\Big[ 1+ \big|Z^{(\delta)}_{\underline u}\big|^p\Big] \diff u  
    + 2^{p-1} c_{\widetilde\sigma}^p L_{\widetilde\sigma}^p \, \int_0^s \E\Big[ \big(1+ \big|Z^{(\delta)}_{\underline u}\big|^p\big)  |W_{u}-W_{\underline u}|^p \Big]  \diff u \Big)\\
    &\leq  4^{p-1} \hat c\, c_{\widetilde \sigma}^p \big( 1
    + L_{\widetilde\sigma}^p c_{W_p} \delta^{\frac{p}{2}}\big) \, \int_0^s \E\big[ 1+ \big|Z^{(\delta)}_{\underline u}\big|^p\big]  \diff u
    \leq  4^{p-1} \hat c\, c_{\widetilde \sigma}^p \big( 1
    + L_{\widetilde\sigma}^p c_{W_p} \delta^{\frac{p}{2}}\big) c_\delta.
\end{aligned}
\end{equation}
For the last summand, we recall that $\nu_i$ denotes the $i$-th jump time of $N$ and use Lemma \ref{FiniteMom} to obtain that for all $\delta\in(0,\delta_0]$ there exists $c_\delta\in(0,\infty)$ such that for all $s\in[0,T]$,
\begin{equation}
    \begin{aligned}\label{ME17}
    &\E\Big[\sup_{t\in[0,s]}\Big|\int_0^t \widetilde \rho\big(Z^{(\delta)}_{u-}\big)\diff N_u\Big|^p\Big]
    = \E\Big[\sup_{t\in[0,s]}\Big| \sum_{i = 1}^{N_t} \widetilde \rho\big(Z^{(\delta)}_{\nu_i-}\big) \Big|^p\Big]
    \leq \E\Big[\Big( \sum_{i = 1}^{N_s} \big|\widetilde \rho\big(Z^{(\delta)}_{\nu_i-}\big)\big| \Big)^p\Big]\\
    &\leq 2^{p-1} c_{\widetilde\rho}^p\, \E\Big[N_s^{p-1} \sum_{i = 1}^{N_s} \big( 1+ \big|Z^{(\delta)}_{\nu_i-}\big|^p\big)\Big]
    \leq 2^{p-1} c_{\widetilde\rho}^p\, \E\Big[N_T^{p-1} \sum_{n=0}^{N_T +M} \big( 1+ \big|Z^{(\delta)}_{\tau_n-}\big|^p\big)\Big]
    \leq 2^{p-1} c_{\widetilde\rho}^p c_\delta,
\end{aligned}
\end{equation}
where $M = \Big\lceil \frac{T}{\delta^2 \log^4(\delta^{-1})} \Big\rceil$.
Plugging \eqref{ME2}, \eqref{ME16}, and \eqref{ME17} in \eqref{ME1} leads that for all $s\in[0,T]$, all $p\in\N$, and all $\delta\in(0,\delta_0]$,
\begin{equation}
    \begin{aligned}\label{ME18}
    &\E\Big[\sup_{t\in[0,s]} |Z^{(\delta)}_{t}|^p\Big]  <\infty.
\end{aligned}
\end{equation}
Next we change our previous estimates with the goal to apply Gronwall's inequality. We again start with \eqref{ME1}. We change the estimate \eqref{ME2} for all $s\in[0,T]$ and all $\delta\in(0,\delta_0]$ to 
\begin{equation}
    \begin{aligned}\label{ME19}
    &\E\Big[\sup_{t\in[0,s]}\Big|\int_0^t \sum_{n=0}^{\infty} \widetilde \mu\big(Z^{(\delta)}_{\tau_{n}}\big)\mathds{1}_{(\tau_n,\tau_{n+1}]}(u) \diff u\Big|^p\Big] 
   \leq 2^{p-1}\, T^{p-1}\, c_{\widetilde\mu}^p\, \E\Big[\int_0^s  1 + |Z^{(\delta)}_{\underline u}|^p \diff u\Big] \\
   &\leq  2^{p-1}\, T^{p}\, c_{\widetilde\mu}^p +  2^{p-1}\,T^{p-1}\, c_{\widetilde\mu}^p\int_0^s\E\Big[ \sup_{v\in[0,u]}  \big|Z^{(\delta)}_{v}\big|^p\Big]\diff u.
\end{aligned}
\end{equation}
We change the estimate in \eqref{ME16} for all $s\in[0,T]$ and all $\delta\in(0,\delta_0]$ to 
\begin{equation}
    \begin{aligned}\label{ME24}
    &\E\Big[\sup_{t\in[0,s]}\Big|\int_0^t \sum_{n=0}^{\infty} \Big(\widetilde \sigma\big(Z^{(\delta)}_{\tau_{n}}\big) + \widetilde\sigma \big(Z^{(\delta)}_{\tau_n}\big) d_{\widetilde\sigma} \big(Z^{(\delta)}_{\tau_n}\big) (W_{u}-W_{\tau_n}) \Big)\mathds{1}_{(\tau_n,\tau_{n+1}]}(u) \diff W_u\Big|^p\Big]\\
    &\leq 4^{p-1} \hat c\, c_{\widetilde \sigma}^p \big( 1
    + L_{\widetilde\sigma}^p c_{W_p} \delta^{\frac{p}{2}}\big)  \, \int_0^s \E\big[ 1+ \big|Z^{(\delta)}_{\underline u}\big|^p\big]  \diff u\\
    &\leq  4^{p-1} \hat c\, c_{\widetilde \sigma}^p \big( 1
    + L_{\widetilde\sigma}^p c_{W_p} \delta^{\frac{p}{2}}\big)  T +  4^{p-1} \hat c\, c_{\widetilde \sigma}^p \big( 1
    + L_{\widetilde\sigma}^p c_{W_p} \delta^{\frac{p}{2}}\big) \int_0^s\E\Big[ \sup_{v\in[0,u]}  \big|Z^{(\delta)}_{v}\big|^p\Big]\diff u.
\end{aligned}
\end{equation}
For the last summand of \eqref{ME1} by Lemma \ref{BDGaKunita} there exists $\hat c \in(0,\infty)$ such that for all $s\in[0,T]$ and all $\delta\in(0,\delta_0]$,
\begin{equation}
    \begin{aligned}\label{ME25}
    &\E\Big[\sup_{t\in[0,s]}\Big|\int_0^t \widetilde \rho\big(Z^{(\delta)}_{u-}\big)\diff N_u\Big|^p\Big] 
    \leq \hat c \int_0^s \E\Big[\big| \widetilde \rho\big(Z^{(\delta)}_{u-}\big)\big|^p\Big]\diff u\\
    &\leq 2^{p-1} \,\hat c\, c_{\widetilde\rho}^p\, T + 2^{p-1} \,\hat c\, c_{\widetilde\rho}^p \int_0^s \E\Big[ \sup_{v\in[0,u]}\big|Z^{(\delta)}_{v}\big|^p\Big]\diff u.
\end{aligned}
\end{equation}
Combining \eqref{ME1}, \eqref{ME19}, \eqref{ME24}, and \eqref{ME25} we get that there exists a constants $\widetilde c_1 \in(0,\infty)$ such that for all $s\in[0,T]$ and all $\delta\in(0,\delta_0]$
\begin{equation}
    \begin{aligned}\label{ME26}
    \E\Big[\sup_{t\in[0,s]} |Z^{(\delta)}_{t}|^p\Big] 
    \leq \widetilde c_1 + \widetilde c_1 \int_0^s \E\Big[ \sup_{v\in[0,u]}\big|Z^{(\delta)}_{v}\big|^p\Big]\diff u. 
    \end{aligned}
\end{equation}
As $[0,T] \ni s \mapsto \E\big[ \sup_{v\in[0,s]}\big|Z^{(\delta)}_{v}\big|^p\big]$ is a finite valued, Borel measurable function we obtain using Gronwall's inequality that there exists a constant $c_1\in(0,\infty)$ such that for all $t\in[0,T]$ and all $\delta\in(0,\delta_0]$,
\begin{equation}
    \begin{aligned}\label{ME27}
    &\E\Big[\sup_{t\in[0,T]} |Z^{(\delta)}_{t}|^p\Big] \leq c_1.
    \end{aligned}
\end{equation}

Next we observe that there exists a constant $\widetilde c_2\in(0,\infty)$ such that for all $t\in[0,T]$ and all $\delta\in(0,\delta_0]$,
\begin{equation}\label{ME28}
    \begin{aligned}
    &\E\big[  \big|Z^{(\delta)}_t- Z^{(\delta)}_{\underline t}\big|^p\big] \\
    &\leq \widetilde c_2 \E\Big[ \big| \widetilde \mu ( Z^{(\delta)}_{\underline t})(t- \underline t)\big|^p + \big|\widetilde\sigma(Z^{(\delta)}_{\underline t}) (W_t-W_{\underline t})\big|^p
    + \Big|\frac{1}{2}\widetilde\sigma(Z^{(\delta)}_{\underline t})d_{\widetilde\sigma}(Z^{(\delta)}_{\underline t}) \big((W_t-W_{\underline t})^2 - (t-\underline t)\big)\Big|^p\Big]\\
    &\leq \widetilde c_2 \Big( 2^{p-1} c_{\widetilde\mu}^p \delta^p\,  \E\big[  1+| Z^{(\delta)}_{\underline t}|^p \big]
    +2^{p-1} c_{\widetilde\sigma}^p\, \E\big[  \big( 1+| Z^{(\delta)}_{\underline t}|^p\big) |W_t-W_{\underline t}|^p\big]\\
    &\quad\quad\quad+\frac{1}{2} c_{\widetilde\sigma}^p L_{\widetilde\sigma}^p\, \E\big[  \big( 1+| Z^{(\delta)}_{\underline t}|^p\big) |W_t-W_{\underline t}|^{2p}\big] 
    +\frac{1}{2} c_{\widetilde\sigma}^p L_{\widetilde\sigma}^p\delta^p\,  \E\big[  1+| Z^{(\delta)}_{\underline t}|^p \big] \Big).
    \end{aligned}
\end{equation}
Applying Lemma \ref{MEW} to the second and third summand and using \eqref{ME27} we get that there exists a constant $\widetilde c_3\in(0,\infty)$ such that for all $t\in[0,T]$ and all $\delta\in(0,\delta_0]$,
\begin{equation}\label{ME29}
    \begin{aligned}
    &\E\big[  \big|Z^{(\delta)}_t- Z^{(\delta)}_{\underline t}\big|^p\big]
    \leq \widetilde c_3\big(\delta^p+ \delta^\frac{p}{2} + \delta^p +\delta^p\big)  \E\big[  1+| Z^{(\delta)}_{\underline t}|^p \big]\\
    &\leq 4 \widetilde c_3 \delta^\frac{p}{2} \Big(  1+ \E\Big[ \sup_{v\in[0,T]} | Z^{(\delta)}_{v}|^p\big) \Big] \Big)
    \leq 4 \widetilde c_3 \delta^\frac{p}{2} (  1+ c_1).
    \end{aligned}
\end{equation}
This proves the second statement. 

For the last statement by Lemma \ref{BDGaKunita} there exists a constant $\hat c\in(0,\infty)$ such that for all $t\in[0,T-\Delta]$ and all $\delta\in(0,\delta_0]$,
\begin{equation}\label{ME34}
    \begin{aligned}
    &\E\Big[ \sup_{s\in[t,t+\Delta]} \big|Z^{(\delta)}_s- Z^{(\delta)}_{t}\big|^p\Big]\\
    &\leq 3^{p-1}\hat c \Big(\int_{t}^{t+\Delta}\E\big[  \big|   \widetilde\mu (Z^{(\delta)}_{\underline u}) \big|^p\big] \diff u
    + \int_{t}^{t+\Delta}\E\big[\big|   \widetilde\sigma (Z^{(\delta)}_{\underline u}) + \widetilde\sigma (Z^{(\delta)}_{\underline u}) d_{\widetilde\sigma}(Z^{(\delta)}_{\underline u}) (W_u -W_{\underline u}) \big|^p\big] \diff u\\
    &\quad\quad\quad\quad\quad +\int_{t}^{t+\Delta} \E\Big[\big| \widetilde\rho (Z^{(\delta)}_{u-}) \big|^p \Big]\diff u\Big).
    \end{aligned}
\end{equation}
For the first summand we estimate for all $u\in[0,T]$ and all $\delta\in(0,\delta_0]$,
\begin{equation}\label{ME35}
    \begin{aligned}
    &\E\big[  \big|   \widetilde\mu (Z^{(\delta)}_{\underline u}) \big|^p\big] 
    \leq 2^{p-1} c_{\widetilde\mu}^p\, \E\big[  1+ \big|Z^{(\delta)}_{\underline u} \big|^p\big] 
    &\leq 2^{p-1} c_{\widetilde\mu}^p\, \Big( 1+  \E\big[ \sup_{t\in[0,T]}\big|Z^{(\delta)}_{t}\big|^p\big]\Big)
    \leq  2^{p-1} c_{\widetilde\mu}^p\, \big( 1+  c_1\big).
    \end{aligned}
\end{equation}
For the second summand we obtain by Lemma \ref{MW} that there exists $c_{W_p}\in (0,\infty)$ such that for all $u\in[0,T]$ and all $\delta\in(0,\delta_0]$,
\begin{equation}\label{ME36}
    \begin{aligned}
    &\E\big[\big|   \widetilde\sigma (Z^{(\delta)}_{\underline u}) + \widetilde\sigma (Z^{(\delta)}_{\underline u}) d_{\widetilde\sigma}(Z^{(\delta)}_{\underline u}) (W_u -W_{\underline u}) \big|^p\big]\\
    &\leq \E\Big[  2^{p-1} \big( \big|\widetilde\sigma (Z^{(\delta)}_{\underline u})\big|^p + \big|\widetilde\sigma (Z^{(\delta)}_{\underline u}) d_{\widetilde\sigma}(Z^{(\delta)}_{\underline u}) (W_u -W_{\underline u})\big|^p\big) \Big]\\
    &\leq 4^{p-1} c_{\widetilde\sigma}^p\, \E\big[   1+\big|Z^{(\delta)}_{\underline u}\big|^p\big]
    + 4^{p-1} c_{\widetilde\sigma}^p L_{\widetilde\sigma}^p\, \E\big[  \big(1+\big|Z^{(\delta)}_{\underline u}\big|^p\big) |W_u -W_{\underline u}|^p\big]\\
    &\leq 4^{p-1} c_{\widetilde\sigma}^p\, \big( 1+ \E\big[\sup_{t\in[0,T]}\big|Z^{(\delta)}_{t}\big|^p\big]\big)
    + 4^{p-1} c_{\widetilde\sigma}^p L_{\widetilde\sigma}^p c_{W_p} \delta^\frac{p}{2} \, \E\big[   1+\big|Z^{(\delta)}_{\underline u}\big|^p\big]\\
    &\leq4^{p-1} c_{\widetilde\sigma}^p ( 1 +  L_{\widetilde\sigma}^p c_{W_p} \delta^{\frac{p}{2}} )( 1+ c_1).
    \end{aligned}
\end{equation}
For the third summand we get for all $u\in[0,T]$ and all $\delta\in(0,\delta_0]$,
\begin{equation}\label{ME37}
    \begin{aligned}
    &\E\Big[\big| \widetilde\rho (Z^{(\delta)}_{u-}) \big|^p \Big] 
    \leq 2^{p-1} c_{\widetilde\rho}^p \, \big(1+\E\big[\sup_{t\in[0,T]}\big|Z^{(\delta)}_{t}\big|^p\big]\big)
    \leq 2^{p-1} c_{\widetilde\rho}^p \,(1+c_1).
    \end{aligned}
\end{equation}
Plugging \eqref{ME35}, \eqref{ME36}, and \eqref{ME37} into \eqref{ME34} yields that there exists a constant $\widetilde c_4\in(0,\infty)$ such that for all $t\in[0,T-\Delta]$ and all $\delta\in(0,\delta_0]$,
\begin{equation}\label{ME38}
    \begin{aligned}
    &\E\Big[\sup_{s\in[t,t+\delta]} \big|Z^{(\delta)}_s- Z^{(\delta)}_{t}\big|^p\Big]
    \leq \widetilde c_4\, \int_{t}^{t+\Delta}( 1+ c_1) \diff u \leq \widetilde c_4\,(1+  c_1 )\, \Delta,
    \end{aligned}
\end{equation}
which finishes the proof.
\end{proof}

\begin{proof}[Proof of Lemma \ref{MWwithInd}]
Recall that $\tau_{n+1}$ can be expressed as a measurable function $f$ of $\tau_n$, $Z^{(\delta)}_{\tau_n}$ and $(N_{\tau_n+s} -N_{\tau_n})_{s\geq 0}$. Hence for all $n\in\N$, all $k\in\N$, all $\delta\in(0,\delta_0]$ and all $t\in[0,T]$,
\begin{equation}
    \begin{aligned}
    &\E\big[\big(1+ \big|Z^{(\delta)}_{\tau_n}\big|^p\big)   |W_t-W_{\tau_n}|^q \mathds{1}_{\Theta^{\varepsilon_2^\delta}}(Z_{\tau_n}^{(\delta)}) \mathds{1}_{[\tau_n,\tau_{n+1})}(t)  \mathds{1}_{\{N_t = k\}} \big]\\
    &= \E\big[\big(1+ \big|Z^{(\delta)}_{\tau_n}\big|^p\big)   |W_t-W_{\tau_n}|^q  \mathds{1}_{\Theta^{\varepsilon_2^\delta}}(Z_{\tau_n}^{(\delta)}) \mathds{1}_{[\tau_n,\tau_{n+1})}(t) \mathds{1}_{\{N_{\tau_n} = k\}} \big]\\
    &\leq \E\Big[\big(1+ \big|Z^{(\delta)}_{\tau_n}\big|^p\big)  \sup_{s\in[0,\delta^2\log^4(\delta^{-1})]} |W_{s+\tau_n}-W_{\tau_n}|^q \mathds{1}_{\Theta^{\varepsilon_2^\delta}}(Z_{\tau_n}^{(\delta)}) \\
    &\quad\quad\quad\quad \quad\quad\quad \cdot \mathds{1}_{\{\tau_n \leq t\}} \mathds{1}_{\{f(\tau_n,Z^{(\delta)}_{\tau_n}, (N_{s+\tau_n}-N_{\tau_n})_{s\geq 0}) > t\}} \mathds{1}_{\{N_{\tau_n} = k\}} \Big].
    \end{aligned}
\end{equation}
By \cite[p.~33]{Grikhman2004}, for all $n\in\N$, all $k\in\N$, all $\delta\in(0,\delta_0]$ and all $t\in[0,T]$,
\begin{equation}
    \begin{aligned}
    &\E\big[\big(1+ \big|Z^{(\delta)}_{\tau_n}\big|^p\big)   |W_t-W_{\tau_n}|^q \mathds{1}_{\Theta^{\varepsilon_2^\delta}}(Z_{\tau_n}^{(\delta)})  \mathds{1}_{[\tau_n,\tau_{n+1})}(t)  \mathds{1}_{\{N_t = k\}} \big]\\
    &\leq \E\Big[\big(1+ \big|Z^{(\delta)}_{\tau_n}\big|^p\big) \mathds{1}_{\{\tau_n \leq t\}} \mathds{1}_{\{N_{\tau_n} = k\}} \\
    &\quad\quad\quad \E\Big[ \sup_{s\in[0,\delta^2\log^4(\delta^{-1})]} |W_{s+\tau_n}-W_{\tau_n}|^q  \mathds{1}_{\{f(\tau_n, Z^{(\delta)}_{\tau_n}, (N_{s+\tau_n}-N_{\tau_n})_{s\geq 0})> t\}}  \Big| \F_{\tau_n} \Big] \Big] \\
    &= \E\Big[\big(1+ \big|Z^{(\delta)}_{\tau_n}\big|^p\big) \mathds{1}_{\{\tau_n \leq t\}} \mathds{1}_{\{N_{\tau_n} = k\}} \\
    &\quad\quad\quad \cdot\E\Big[ \sup_{s\in[0,\delta^2\log^4(\delta^{-1})]} |W_{s+\tau_n}-W_{\tau_n}|^q  \mathds{1}_{\{f(z,\widetilde z, (N_{s+\tau_n}-N_{\tau_n})_{s\geq 0})> t\}}  \Big| \F_{\tau_n} \Big]\Big|_{z =\tau_n, \widetilde z =Z^{(\delta)}_{\tau_n} } \Big].
    \end{aligned}
\end{equation}
By Lemma \ref{PropDiscGrid}, Lemma \ref{MEW}, and \cite[p.~33]{Grikhman2004} we get for all $n\in\N$, all $k\in\N$, all $\delta\in(0,\delta_0]$ and all $t\in[0,T]$,
\begin{equation}
    \begin{aligned}
    &\E\big[\big(1+ \big|Z^{(\delta)}_{\tau_n}\big|^p\big)   |W_t-W_{\tau_n}|^q \mathds{1}_{\Theta^{\varepsilon_2^\delta}}(Z_{\tau_n}^{(\delta)})  \mathds{1}_{[\tau_n,\tau_{n+1})}(t)  \mathds{1}_{\{N_t = k\}} \big]\\
    &\leq c_{W_q} \delta^q \log^{2q}(\delta^{-1}) \E\Big[\big(1+ \big|Z^{(\delta)}_{\tau_n}\big|^p\big) \mathds{1}_{\{\tau_n \leq t\}} \mathds{1}_{\{N_{\tau_n} = k\}} \E\Big[ \mathds{1}_{\{f(z,\widetilde z, (N_{s+\tau_n}-N_{\tau_n})_{s\geq0})> t\}} \Big]\Big|_{z =\tau_n, \widetilde z =Z^{(\delta)}_{\tau_n}} \Big]\\
    &= c_{W_q} \delta^q \log^{2q}(\delta^{-1}) \E\Big[\big(1+ \big|Z^{(\delta)}_{\tau_n}\big|^p\big) \mathds{1}_{\{\tau_n \leq t\}}  \mathds{1}_{\{f(\tau_n,Z^{(\delta)}_{\tau_n}, (N_{s+\tau_n}-N_{\tau_n})_{s\geq0})> t\}} \mathds{1}_{\{N_{\tau_n} = k\}} \Big]\\
    &= c_{W_q} \delta^q \log^{2q}(\delta^{-1}) \E\Big[\big(1+ \big|Z^{(\delta)}_{\tau_n}\big|^p\big) \mathds{1}_{[\tau_n, \tau_{n+1})}(t)  \mathds{1}_{\{N_{t} = k\}} \Big].
    \end{aligned}
\end{equation}
Hence, for all $n\in\N$, all $k\in\N$, all $\delta\in(0,\delta_0]$ and all $t\in[0,T]$,
\begin{equation}
    \begin{aligned}
    &\E\Big[ \big(1+ \big|Z^{(\delta)}_{\underline{t}}\big|^p\big)   |W_t-W_{\underline{t}}|^q \mathds{1}_{\Theta^{\varepsilon_2^\delta}}(Z_{\underline t}^{(\delta)})\Big]\\
    &= \sum_{k=0}^\infty \sum_{n=0}^{\infty} \E\Big[ \big(1+ \big|Z^{(\delta)}_{\tau_n}\big|^p\big)   |W_t-W_{\tau_n}|^q \mathds{1}_{\Theta^{\varepsilon_2^\delta}}(Z_{\tau_n}^{(\delta)}) \mathds{1}_{[\tau_n,\tau_{n+1})}(t) \mathds{1}_{\{N_t=k\}} \Big]\\
    &\leq \sum_{k=0}^\infty \sum_{n=0}^{\infty} c_{W_q} \delta^q \log^{2q}(\delta^{-1}) \E\Big[\big(1+ \big|Z^{(\delta)}_{\tau_n}\big|^p\big) \mathds{1}_{[\tau_n, \tau_{n+1}) }(t)  \mathds{1}_{\{N_{t} = k\}} \Big]\\
    &\leq c_{W_q} \delta^q \log^{2q}(\delta^{-1}) \E\Big[\sum_{n=0}^{\infty} \big(1+ \big|Z^{(\delta)}_{\tau_n}\big|^p\big) \mathds{1}_{[\tau_n, \tau_{n+1}) }(t) \Big]
    \leq c_{W_q}  \delta^q \log^{2q}(\delta^{-1}) \Big(1+ \E\big[\sup_{t\in[0,T]} \big|Z^{(\delta)}_{t}\big|^p\big] \Big)\\
    &\leq (1+ c_1^p) c_{W_q}  \delta^q \log^{2q}(\delta^{-1}).
    \end{aligned}
\end{equation}
\end{proof}

\setlength{\bibsep}{0pt plus 0.3ex}
{\footnotesize

}

\vspace{2em}
\centerline{\underline{\hspace*{16cm}}}

\noindent Verena Schwarz \Letter \\
Department of Statistics, University of Klagenfurt, Universit\"atsstra\ss{}e 65-67, 9020 Klagenfurt, Austria\\
verena.schwarz@aau.at\\

\end{document}